\newtheorem{Thm}{Theorem}[section]
\newtheorem{Conj}[Thm]{Conjecture}
\newtheorem{Prop}[Thm]{Proposition}
\newtheorem{Def}[Thm]{Definition}
\newtheorem{Def/Thm}[Thm]{Definition/Theorem}
\newtheorem{Cor}[Thm]{Corollary}
\newtheorem{Lemma}[Thm]{Lemma}
\theoremstyle{remark}
\newtheorem{Rmk}[Thm]{Remark}
\numberwithin{equation}{section}
\newcommand{\ti }{\times}
\newcommand{\ot }{\otimes}
\newcommand{\ra }{\rightarrow}
\newcommand{\lra }{\longrightarrow}
\newcommand{\Hom }{{\mathrm{Hom}}}
\newcommand{\Spec}{{\mathrm{Spec}}}
\newcommand{\Ker}{{\mathrm{Ker}}}
\newcommand{\Pic}{{\mathrm{Pic}}}
\newcommand{\rank }{{\mathrm{rank}}}
\newcommand{\cO}{{\mathcal{O}}}
\newcommand{\cL}{{\mathcal{L}}}
\newcommand{\cE}{{\mathcal{E}}}
\newcommand{\cH}{{\mathcal{H}}}
\newcommand{\cP}{{\mathcal{P}}}
\newcommand{\cC}{{\mathcal{C}}}
\newcommand{\cX}{{\mathcal{X}}}
\newcommand{\G}{{G}}
\newcommand{\NN}{{\mathbb N}}
\newcommand{\PP }{{\mathbb P}}
\newcommand{\QQ }{{\mathbb Q}}
\newcommand{\CC }{{\mathbb C}}
\newcommand{\ZZ }{{\mathbb Z}}
\newcommand{\ke }{{\varepsilon }}
\newcommand{\ka }{{\alpha}}
\newcommand{\X}{\mathfrak{X}}
\newcommand{\WmodG}{W/\!\!/\G}
\newcommand{\frC}{\mathfrak{C}}
\newcommand{\fY}{\mathfrak{Y}}
\newcommand{\WtG}{W/\!\!/_{\!\theta}\G}
\newcommand{\uX}{\underline{X}}
\newcommand{\ucE}{\underline{\cE}}
\newcommand{\uC}{\underline{C}}
\newcommand{\ux}{\underline{x}}
\newcommand{\tw}{\mathrm{tw}}
\newcommand{\fB}{\mathfrak{B}}
\newcommand{\fQmap}{\mathfrak{Qmap}}
\newcommand{\reg}{\mathrm{reg}}
\newcommand{\Qke}{Q_{g,k}^{\ke}(X, \beta )}
\newcommand{\Qked}{Q_{g,k}^{\ke}(X, d )}
\newcommand{\Eff}{\mathrm{Eff}}
\newcommand{\bx}{\mathbf{x}}
\newcommand{\QGke}{QG^{\ke}_{0, k, \beta}(X)}
\newcommand{\bIX}{\bar{I}_{\mu}X}
\newcommand{\vir}{\mathrm{vir}}
\newcommand{\T}{T}
\newcommand{\bL}{\mathbb{L}}
\newcommand{\lan}{\langle}
\newcommand{\ran}{\rangle}
\newcommand{\lla}{\langle\!\langle}
\newcommand{\rra}{\rangle\!\rangle}
\newcommand{\one}{{\mathbbm 1}}
\newcommand{\QGs}{QG^{\ke}_{0, [k]\cup\{\star\}, \beta }(X)}
\newcommand{\Qa}{Q^{\ke}_{0, A_1\cup \{\bullet\}, \beta _1 }(X)}
\newcommand{\Qb}{Q^{\ke}_{0, A_2\cup \{\check{\bullet}\}, \beta _2 }(X)}
\newcommand{\HA}{H}
\newcommand{\IX}{I_{\mu}X}
\newcommand{\br}{\mathbf{r}}
\newcommand{\ev}{\tilde{ev}}
\newcommand{\orb}{\mathrm{orb}}
\newcommand{\Giv}{\mathrm{Giv}}
\newcommand{\bIY}{\bar{I}_{\mu}Y}
\newcommand{\IY}{I_{\mu}Y}
\newcommand{\cK}{\mathcal{K}}
\newcommand{\e}{\mathbf{e}}
\newcommand{\Aut}{\mathrm{Aut}}
\newcommand{\fM}{\mathfrak{M}}
\newcommand{\fMtw}{\mathfrak{M}^{\tw}_{g, k}}
\newcommand{\pr}{\mathrm{pr}}
\newcommand{\PPa}{\PP _{a,1}}
\newcommand{\Homa}{\Hom _{\beta}(\PPa, X)}
\newcommand{\Qmaprep}{Q_{\PP _{a,1}}(X, \beta)}
\newcommand{\PPb}{\PP _{\bullet, 1}}
\newcommand{\Qket}{Q^\ke _{0,2+m} (X, \beta)}
\begin{document}
\title{Orbifold Quasimap Theory}

\begin{abstract} We extend to orbifolds the quasimap theory of \cite{CK, CKM}, as well as the genus zero wall-crossing results from \cite{CKg0, BigI}.
As a consequence, we obtain generalizations of orbifold mirror theorems, in particular, of the mirror theorem for toric orbifolds recently proved independently
by Coates, Corti, Iritani, and Tseng \cite{CCIT}.
\end{abstract}

\author{Daewoong Cheong}
\address{School of Mathematics, Korea Institute for Advanced Study,
85 Hoegiro, Dongdaemun-gu, Seoul, 130-722, Korea}
\email{daewoongc@kias.re.kr}
\author{Ionu\c t Ciocan-Fontanine}
\noindent\address{School of Mathematics, University of Minnesota, 206 Church St. SE,
Minneapolis MN, 55455, and\hfill
\newline \indent School of Mathematics, Korea Institute for Advanced Study,
85 Hoegiro, Dongdaemun-gu, Seoul, 130-722, Korea}
\email{ciocan@math.umn.edu}
\author{Bumsig Kim}
\address{School of Mathematics, Korea Institute for Advanced Study,
85 Hoegiro, Dongdaemun-gu, Seoul, 130-722, Korea}
\email{bumsig@kias.re.kr}

\subjclass[2010]{Primary 14D20, 14D23, 14N35}

\maketitle

\tableofcontents

\section{Introduction} Orbifold Gromov-Witten theory was first introduced by Chen and Ruan \cite{CR}, with an algebraic version due later to Abramovich, Graber, and Vistoli \cite{AGV}.

The theory of $\ke$-stable quasimaps to a large class of GIT quotient targets was developed in \cite{CKM}, generalizing and unifying the earlier works \cite{MOP,CK, MM, Toda}. 
In the appropriate general context of the theory, the  GIT target is a smooth Deligne-Mumford stack (or orbifold), but
to keep the
technicalities at a reasonable level, \cite{CKM} worked under the assumption that the GIT quotient is a smooth variety and delegated the orbifold case to subsequent work. In \S2 of this paper 
we formally establish the foundations of quasimap theory for orbifold GIT targets by combining the formalism of \cite{CKM} with the one developed in \cite{AGV} for
algebraic orbifold Gromov-Witten theory. 

Namely,
consider a triple $(W,\G,\theta)$ consisting of an affine variety $W$ over $\CC$, a reductive complex algebraic group $\G$ acting on $W$, and a character $\theta$ of $\G$.
When there are no strictly semistable points for the linearization induced by $\theta$, we have the  {\it GIT stack quotient} $X:=[W^{ss}(\theta)/\G]=[W^{s}(\theta)/\G]$. 
We construct a family, depending on a stability
parameter $\ke\in \QQ_{>0}\cup\{0+,\infty\}$, of (relative) compactifications of moduli stacks of maps from irreducible twisted marked curves with fixed numerical data to the 
GIT stack quotient $[W^{ss}/\G]$. These compactifications
are themselves modular and
we prove in Theorem \ref{Found_Thm} that they are Deligne-Mumford stacks, proper over the affine quotient. Furthermore, if the singularities of $W$ are at worst lci and the semistable locus $W^{ss}$ is nonsingular, these moduli spaces carry canonical perfect obstruction theories and therefore possess
virtual fundamental classes. 

Once the moduli spaces with the required properties are constructed, 
the descendant orbifold $\ke$-quasimap invariants associated to the triple $(W,\G,\theta)$ are defined as usual via integration against the virtual class of products of tautological psi-classes and of
Chen-Ruan cohomology classes pulled-back from the (rigidified) inertia stack of $X$ via the evaluation maps.
When $\ke>2$, they recover the orbifold Gromov-Witten invariants of the DM stack $X$.

It is natural to seek wall-crossing formulas for the invariants as the stability
parameter $\ke$ varies. For triples giving GIT quotients which are nonsingular varieties, such wall-crossing formulas in genus zero were conjectured in \cite{CKg0} as equalities of generating
series of the invariants after a change of variables. 
The formulas were proved in \cite{CKg0} 
in the presence of an action of a torus $T$ on $W$
such that the fixed loci of the induced $T$-action on $\WmodG$ have good properties. 
In section 3 we provide analogous results for the orbifold case. We describe them informally in this Introduction and refer the reader to Conjecture \ref{Main_Conj} and Theorem \ref{Comp_Thm} for the precise statements. These results may be 
viewed as generalized mirror theorems for orbifolds. 

Recall that the genus zero orbifold Gromov-Witten theory of $X$ is encoded in Givental's Lagrangian cone $\cL_X$ and this cone is completely determined by the big $J$-function 
 $J^\infty(q,t,z)$ of $X$ (the notation reflects that GW theory corresponds to the stability parameter $\ke=\infty$).
This is the generating series for all Gromov-Witten invariants with at most one descendent insertion and arbitrary number of primary insertions. 
It depends on the Novikov variables $q$, the general Chen-Ruan cohomology class $t$, and a formal variable $z$.
The $t$-derivatives of the $J$-function determine the
so-called $S$-operator $S_t^\infty$. Conversely, the string equation says that the $J$-function is obtained from the $S$-operator: $J^\infty(q,t,z)=S^\infty_t(\one_X)$, where $\one_X$ is the fundamental class.
Since the terms of $S$-operator involve invariants with at least two insertions, it has a direct quasimap analogue $S^\ke_t$ for every stability parameter. The wall-crossing formula 
for $S$-operators then reads
 \begin{equation}\label{mirror1} S^\ke_t(\one_X)=S^\infty_{\tau^\ke(t)}(\one_X),\end{equation}
 with  the ``mirror map" $\tau^\ke(t)$ a generating series for primary $\ke$-quasimap invariants with a fundamental class insertion. In particular,
 the mirror map acquires an enumerative interpretation. Theorem \ref{Comp_Thm} proves \eqref{mirror1} under the assumption that there is a $T$-action whose fixed points in $X$ are isolated.

Because some genus zero quasimap moduli spaces require at least two markings, a different construction is needed to extend the $J$-function to other stability parameters. To this end, recall that
the Gromov-Witten big $J$-function has a well-known expression as a generating series of certain localization residues for the natural $\CC^*$-action on {\it graph spaces} - moduli spaces of genus zero stable maps with one parametrized component of the domain curve. The graph spaces exist for any stability parameter $\ke\in \QQ_{>0}\cup\{0+,\infty\}$ and the corresponding localization residues define
the big $J^\ke$-function of the $\ke$-quasimap theory. The wall-crossing/mirror formula for big $J$-functions is then the statement that $J^\ke(q,t,z)$ is on the Lagrangian cone $\cL_X$ 
for all $\ke\geq 0+$; this is the second part of Theorem \ref{Comp_Thm}. For semi-positive targets (see Definition \ref{semi-pos}) this follows from the formula for $S$-operators. Without the semi-positivity condition a separate proof is needed and
our argument requires the additional assumption that the one-dimensional $T$-orbits in $X$ are also isolated.

If one is primarily interested in determining the orbifold Gromov-Witten invariants of $X$, a wall-crossing formula becomes useful when the quasimap side can be explicitly computed. 
While in general the $J^\ke$-functions are equally hard to compute for all stability parameters,
in section 4 of the paper we consider a version of moduli spaces of quasimaps, dubbed `` stacky loop spaces", which will typically allow 
one to find closed formulas
for the {\it small} $I$-function $I(q,z):=J^{0+}(q,0,z)$, that is, for the specialization at $t=0$ of the $J$-function for the asymptotic stability condition $\ke=0+$.
Following \cite{BigI}, we also introduce a new orbifold ``big $I$-function"  $\mathds{I}_X(q,t,z)$. 
It is an explicit modification of the small $I$-function by certain exponential factors and
it depends on a parameter $t$ which runs over the part of $H^*(X)$ generated by Chern classes of line bundles associated to characters of $G$.
As in \cite{BigI}, the new $\mathds{I}$-function can be viewed as arising from a variant with weighted markings of quasimap theory. The second main result of the paper, Theorem \ref{big_I}, states that 
$\mathds{I}_X(q,t,z)$ is on the Lagrangian cone $\cL_X$ whenever the $T$-action has isolated fixed points and isolated one-dimensional orbits.

As an application of the theory developed in the paper, in section 5 we discuss the case of toric DM stacks. These are the quotients $X=[W^{ss}/\G]$ for $W$ a vector space and $\G\cong(\CC^*)^r$
an algebraic torus.
We make Theorem \ref{big_I} completely explicit by calculating the small $I$-function $I(q,z)$ in closed form via stacky loop spaces. 
This gives a closed form for the ``big" $\mathds{I}$-function as well.
As a result, the Mirror Theorem for toric orbifolds, recently proved by different methods in \cite{CCIT}, becomes a special case of Theorem \ref{big_I}, see Corollary \ref{toric-explicit}.
\subsection{Acknowledgments} 
The research of D.C. was partially supported by the NRF grant 2007-0093859.
The research of I.C.-F. was partially supported by the NSF grant DMS-1305004. 
The research of B.K. was partially supported by the NRF grant 2014-001824.
I.C.-F. thanks KIAS for excellent
working conditions during visits when this research was conducted.

\section{The stack of stable quasimaps to an orbifold GIT target}

\subsection{Conventions and notations}

We work over the field $\CC$.  All schemes are  locally of finite type over $\Spec\CC$ unless otherwise stated.
Associated to a DM stack $X$ of finite type over $\CC$, we have the {\it (cyclotomic) inertia stack} $\IX$ and its rigidification $\bIX$ (see \cite{AGV}).
For DM stacks $X, \bIX, ...$, we denote by $\uX, \underline{\bIX}, ...$  their coarse moduli spaces.

\subsection{Quotients}

Let $W$ be an irreducible affine variety with a right action
 of a reductive
algebraic group $\G$. Denote by $\CC _\theta$ the one dimensional $G$ representation space
associated to a fixed character $\theta$ of $\G$.  Denote by $W^{ss}$ (or $W^{ss}(\theta)$) 
the semistable locus, and  by $W^{s}$ (or $W^s(\theta)$) the stable locus with respect to the linearization $L_\theta:=W\times\CC_\theta$ (see e.g., \cite{King} for the definitions).
There are then four quotients with natural morphisms between them, summarized in the diagram 
\begin{equation}\label{quot_diag}
 \xymatrix{     X := [W^{ss}/\G]  \ar@{^{(}->}[r]  \ar[d]  & \X:=[W/\G]   \ar[d] \\
    \underline{X}:= \WtG 
   \ar[r] &  \uX _0:= \mathrm{Spec}\CC[W]^{\G} .    } 
   \end{equation}
   In the top line the brackets denote as usual the stack quotients, and the arrow is the open embedding induced by the inclusion $W^{ss}\subset W$.
The GIT quotient  $ \WtG$ is defined to be  \[  \mathbf{Proj}\oplus _{m=0}^\infty \Gamma (W, L_\theta ^m)^{\G}\]
and the bottom arrow is the obvious projective morphism to the affine quotient $\mathrm{Spec}\CC[W]^{\G} $. (Note also that the affine quotient coincides with $W/\!\!/_{\! 0}G$.)
The stack $[W^{ss}/G]$ will be called the {\em GIT stack quotient} with respect to $\theta$.
The natural vertical morphisms in \eqref{quot_diag} are obtained from the fact that principal $G$-bundles are categorical quotients,
since the morphisms $$W^{ss}(\theta) \ra \WtG,\;\;\;\; W\ra W/\!\!/_{\!0}G$$
commute with the $G$-actions.

In this paper we assume that $$W^{ss}(\theta)=W^{s}(\theta)$$ unless otherwise stated so that the GIT stack quotient $X=[W^{ss}/G]$ is 
a quasi-compact DM-stack. 
The morphism $X\ra \uX $ is the coarse moduli and is a proper morphism, see e.g. \cite{KM}. 
Therefore $X$ is proper over $\uX _0$ since $\uX$ is projective over the affine quotient $\uX _0$.      

\subsection{Stable quasimaps to $X$}\label{Def_Qmaps}
Let $\e$ be the least common multiple 
of  the exponents $|\Aut (\bar{p})|$ of automorphism groups $\Aut (\bar{p})$ of all geometric points $\bar{p}\ra X$ of $X$. 

 \begin{Def}\label{Def_qmap}
  Let $(C, x_1,...,x_k)$ be a $k$-pointed, genus $g$ twisted curve, see \cite[\S 4]{AGV}
 and let $\phi: (C, x_1, ..., x_k)  \ra (\uC , \ux_1, ..., \ux_k )$ be its coarse moduli space. 
 
A representable morphism $[u]$ from $(C, x_1,...,x_k)$
to $\X$ is called a $k$-pointed, genus $g$ {\em quasimap to $X$} (alternatively, a {\em  $\theta$-quasimap to $\X$}) if 
$[u]^{-1}(\X \setminus X)$ is purely zero-dimensional.

The  locus $[u]^{-1}(\X \setminus X)$ is called the {\em base locus} of $[u]$.

The {\em  class} $\beta$ of the quasimap is the group homomorphism
\[ \beta: \Pic \X  \ra \QQ
, \ \ L \mapsto \deg ([u]^*(L)) .\]
The rational number $\beta(L_\theta)=\deg ([u]^*(L_\theta))$ is called the {\em degree} of the quasimap $[u]$.

A  quasimap $((C, x_1,.., x_k), [u])$ is called {\em prestable} if the base locus contains
neither marked gerbes nor nodal gerbes of $(C, x_1,..., x_k)$.

Fix a positive rational number $\ke$. 
A prestable  quasimap is called {\em $\ke$-stable} if the following two conditions hold:
\begin{enumerate}
\item The $\QQ$-line bundle
\begin{equation}\label{ample} \omega _{\uC} (\sum \ux _i) \ot (\phi _*([u]^*L _\theta ^{\ot \e})) ^{\ke/\e} \end{equation}
on the coarse curve $\uC$ is ample.

\item For every $x$ in $C$,  \[ \ke l(x)\le 1, \] where $l(x)$ is the length at $x$ defined 
in \cite[\S 7.1]{CKM}.
\end{enumerate}

A prestable quasimap is called {\em $(0+)$-stable} (or simply {\em stable}) if it is $\ke$-stable for every small enough positive rational number $\ke$.

\end{Def}


A few explanations are in order.

\begin{itemize}

\item Throughout this paper, a twisted curve is required to be balanced (\cite[\S 4]{AGV}).

\item The degree of $[u]^*L$ for $L\in \Pic \X$ is defined using  a finite covering of the normalization of $C$
as in \cite[\S 7.2]{AGV}.

Assume that the quasimap $[u]$ is prestable.
By the representability of $[u]$,  
the push-forward $\phi _*([u]^*L _\theta ^{\ot \e})$ is a line bundle on the coarse moduli space $\uC$  and the adjunction homomorphism 
$\phi^*\phi _*([u]^*L _\theta ^{\ot \e})\ra[u]^*L^{\ot \e}$ is an isomorphism, see Lemma 2.1.2 of \cite{AGV} and its proof. 
In particular, $\deg [u]^*L\in \frac{1}{\e}\ZZ$. In other words,
for prestable twisted quasimaps the class $\beta$ is an element of $\Hom _\ZZ(\Pic \X, \frac{1}{\e}\ZZ)$ and so it has uniformly bounded denominators.

However, note that the definition of the class $\beta$ as an element of $\Hom _\ZZ(\Pic \X,\QQ)$ makes sense for an arbitrary morphism of stacks $[u]:C\ra\X$ and we will use it later in this generality.

\item By its very definition (\cite[\S 7.1]{CKM}), the length at $x$ is nonzero if and only if $x$ is a base-point of the quasimap. By the prestable condition, these are away from the stacky points of the domain curve, hence it is appropriate to use the same notion of length to define stability in the orbifold theory as well. 

\item It is immediate from the definition that a prestable quasimap is $\ke$-stable if and only if 
\begin{enumerate}
\item for every irreducible component $C_i$ of $C$,
\begin{equation}\label{Bdd_Domain} 2g(C_i) - 2 + \#\text{ special points on }C_i + \ke \deg ([u]^* L_\theta |_{C_i})  > 0 \end{equation}
and 
\item $\ke l(x)\le 1$ for every point $x$ in $C$.
\end{enumerate}

\item For $(g, k)\ne (0, 0)$, a prestable quasimap is a stable twisted map into $X$
if and only it is an $\ke$-stable quasimap for some $\ke >1$. For $(g, k)=(0,0)$, the same is true with $\ke >2$.
For simplicity, a large enough $\ke$ will be denoted by $\infty$.

\item  The base locus of a prestable quasimap $[u]$ will be considered as the reduced scheme. 

\item The notions of (prestable, $\ke$-stable) quasimaps over an algebraically closed field of characteristic zero can be identically defined. 
\end{itemize}


\bigskip

From now on we fix $\ke \in [0+, \infty ]$. For short, we let $\bx = x_1, ..., x_k$.

\begin{Def}
A group homomorphism  $\beta \in \Hom _{\ZZ}(\Pic \X , \QQ )$ is called {\em $L_\theta$-effective} if it is realized as a finite sum of  classes 
of some quasimaps to $X$. Such elements form a semigroup with identity $0$, denoted by $\Eff (W, G, \theta )$.
\end{Def}

\begin{Lemma}
If $((C, \bx), [u])$ is a quasimap of class  $\beta$, then $\beta(L_\theta)\geq 0$. Moreover,  $\beta(L_\theta)=0$
if and only if $\beta =0$, if and only if the quasimap is constant (i.e., $u$ is a map into $X$, factored through an inclusion $B\Gamma
\subset X$ of the classifying groupoid $B\Gamma$ of a finite group $\Gamma$).
\end{Lemma}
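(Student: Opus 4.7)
The plan is to exploit the basic GIT fact that, for sufficiently large and divisible $m$, the unstable locus $W\setminus W^{ss}$ is cut out by $G$-invariant sections of $L_\theta^{\otimes m}$, and that a large enough finite collection of such sections separates $G$-orbits in $W^{ss}$. First, I would fix such an $m$ and choose sections $\sigma_1,\dots,\sigma_N \in \Gamma(W,L_\theta^{\otimes m})^G = \Gamma(\X,L_\theta^{\otimes m})$ with these two properties. Pulling back via $[u]$ produces sections $[u]^*\sigma_j$ of $[u]^*L_\theta^{\otimes m}$ on $C$ whose common zero locus equals the base locus $[u]^{-1}(\X\setminus X)$, which is zero-dimensional by hypothesis. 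Hence on every irreducible component $C_i\subseteq C$ at least one $[u]^*\sigma_j|_{C_i}$ is not identically zero, so $[u]^*L_\theta^{\otimes m}|_{C_i}$ admits a nonzero global section and has non-negative degree; summing over components yields $\beta(L_\theta)=\deg[u]^*L_\theta\ge 0$.

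For the equality case $\beta(L_\theta)=0$, each $\deg([u]^*L_\theta|_{C_i})=0$, so on each $C_i$ the line bundle $[u]^*L_\theta^{\otimes m}|_{C_i}$ is a degree-zero line bundle on a connected twisted curve admitting a nonzero section, hence is trivial. Its global sections are then constants in $\CC$, so every $[u]^*\sigma_j|_{C_i}$ is a scalar, and these scalars are not all zero; at least one is thus a nowhere-vanishing nonzero constant, which rules out any base point $p\in C_i$ (at such $p$ every $\sigma_j([u](p))=0$ would force every scalar to vanish, a contradiction). Thus $[u]|_{C_i}$ has image in $X$; and since the separating ratios of the $\sigma_j$'s pull back to ratios of constants, the composition $C_i\to X\to\uX$ is constant, so $[u]|_{C_i}$ factors through a single closed substack $B\Gamma_i\hookrightarrow X$ with $\Gamma_i=G_{x_i}$ finite (using $W^{ss}=W^s$). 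Connectedness of $C$ then forces the $\Gamma_i$'s across components to agree, giving a factorization $[u]\colon C\to B\Gamma\hookrightarrow X$.

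For the reverse implication, if $[u]$ factors through $B\Gamma\hookrightarrow X$ with $\Gamma$ finite, every $L\in\Pic\X$ restricts on $B\Gamma$ to a character of $\Gamma$, which is torsion in $\Pic(B\Gamma)$; pulling back gives a torsion line bundle on $C$, so $\deg[u]^*L=0$. This yields $\beta=0$. Combined with the trivial $\beta=0\Rightarrow\beta(L_\theta)=0$, this completes the three-way equivalence $\beta(L_\theta)=0\iff\beta=0\iff [u]$ constant.

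The main obstacle will be the equality case, specifically ruling out base points when $\beta(L_\theta)=0$: a priori $[u]|_{C_i}$ might factor through the closure in $\X$ of a stable orbit, which can extend into $W\setminus W^{ss}$. The key device that closes this gap is the elementary observation that a trivial line bundle on a connected curve has only constant sections, so a nonzero pulled-back section is nowhere vanishing and thereby precludes vanishing at any hypothetical base point.
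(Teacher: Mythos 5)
Your proof is correct, but it takes a genuinely different route from the paper's. The paper's proof is a one-line reduction to the scheme case: pass to a finite covering $\hat{C}\to C$ of the normalization of the twisted curve with $\hat{C}$ an honest nonsingular projective curve; the induced map $[\hat{u}]:\hat{C}\to\X$ is a union of scheme-level quasimaps, and the statement follows from \cite[Lemma 3.2.1]{CKM} together with the observation that degrees and constancy transfer between $C$ and its finite covering. Your argument instead re-derives the underlying GIT mechanism directly on the twisted curve: $W^{ss}$ is the nonvanishing locus of finitely many $G$-invariant sections of $L_\theta^{\otimes m}$, the base locus is the common zero locus of their pullbacks, so each irreducible component carries a nonzero section of a line bundle of degree $\ge 0$; in the degree-$0$ case the bundle is trivial and the section is a nowhere-vanishing constant, which places the image inside $X$, and then (using that these sections together with $\CC[W]^G$ separate points of $\uX$, and that global functions on the proper component $C_i$ are constant) the map to $\uX$ is constant, so $[u]|_{C_i}$ factors through a single residual gerbe $B\Gamma_i$, and connectedness of $C$ identifies the $\Gamma_i$. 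This buys a self-contained argument that never invokes the scheme-level lemma, at the cost of reproducing the standard GIT argument that the cited lemma itself rests on; the key transplant, that degree $0$ plus a nonzero section forces triviality, is legitimate on twisted curves because effective $\QQ$-divisors still have nonnegative degree. One step you leave implicit and could state: constancy of the ratios of the $\sigma_j$ alone does not give constancy of $C_i\to\uX$; you also need the pullbacks of $\CC[W]^G$ to be constant (immediate by properness of $C_i$) and that the resulting map $\uX\to\PP^{N-1}\times\uX_0$ is injective for $m$ large, both of which are part of the standard GIT package you invoke at the outset.
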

\begin{proof}
Consider a  finite covering $\hat{C}$ of the normalization of $C$ such that $\hat{C}$ is a (possibly disconnected) nonsingular projective curve.
Then the induced map $[\hat{u}]:\hat{C} \ra \X$ is a union of quasimaps. Now the Lemma follows by applying \cite[Lemma 3.2.1]{CKM} to $[\hat{u}]$.
\end{proof}

\begin{Def}
Let $(C, \bx)$ be a family of twisted $k$-pointed genus $g$ curve over a scheme $S$, see \cite[\S 4]{AGV}, \cite[\S 1.1]{L-Ol}.
In particular, the markings $x_i\subset C$ are \'etale gerbes over $S$ banded by finite cyclic groups.
A pair $((C, \bx), [u])$ is called a (resp. {\em prestable, $\ke$-stable}) quasimap to $X$ over $S$ if
$[u]$ is a morphism from $C$ to $\X$ such that the restrictions to geometric fibers are 
(resp. prestable, $\ke$-stable) quasimaps in the sense of Definition \ref{Def_qmap}.
\end{Def}


 \begin{Lemma}\label{ext}
 Let $D$ be a codimension 1 nonsingular subvariety of a nonsingular variety $Y$, and let
$f$ be a morphism from $Y\setminus D$ to a separated DM stack $\cX$. 

Suppose that the coarse moduli level morphism $\underline{f}$ induced by $f$ is extendible to a morphism $Y\ra \underline{\cX}$. 
Then after possibly shrinking $Y$ at a given closed point $p$ of $D$ (i.e., taking an \'etale open subset containing  $p$) 
we have a {\em representable} morphism $\tilde{f}:
Y_{D, r} \ra \cX$ extending $f$, where $Y_{D, r}$ is the root stack
of the effective divisor $D$ with $r$-twisting for some $r$ (for the definition of root stacks see 
\cite{Cad, AGV}). The pair $(Y_{D, r}, \tilde{f})$ is unique up to a unique isomorphism, i.e.,
another one $(Y_{D,r'}, (\tilde{f})')$ is isomorphic to the $(Y_{D, r}, \tilde{f})$ up to a unique isomorphism. 
\end{Lemma}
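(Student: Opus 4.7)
The plan is to reduce to an étale-local model around $p \in D$ in which $\cX$ is a global quotient, and then to combine purity of the branch locus (Abhyankar's lemma) with the universal property of the root stack to construct the unique extension.

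First, since the statement is étale-local on $Y$, I would shrink $Y$ around $p$ so that $D \subset Y$ is cut out by a single equation $t=0$. Since $\cX$ is a separated DM stack, after a further shrinking I may pass to an étale neighborhood of the image point $q := \underline{f}(p) \in \underline{\cX}$ on which $\cX$ is isomorphic to a quotient stack $[U/\Gamma]$, where $\Gamma = \Aut(\bar{q})$ is a finite group acting on an affine scheme $U$ and $U \to \underline{\cX}$ is étale; this is the standard slice presentation of a DM stack at a point, and is the mechanism through which the problem becomes group-theoretic. Under this identification the morphism $f\colon Y\setminus D \to \cX$ is equivalent to a pair $(\pi, \varphi)$ consisting of a $\Gamma$-torsor $\pi\colon P \to Y\setminus D$ together with a $\Gamma$-equivariant morphism $\varphi\colon P \to U$.

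Next I would extend the torsor. The finite étale cover $\pi$ sits over the complement of the smooth divisor $D$ in the smooth variety $Y$, so by purity of the branch locus / Abhyankar's lemma it extends uniquely to a normal finite cover $\bar{P} \to Y$ tamely ramified along $D$ with ramification index equal to the order $r$ of the local monodromy of $\pi$ around $D$ (necessarily dividing $|\Gamma|$). The defining universal property of the root stack $Y_{D,r}$ is precisely that the pullback $\tilde{P} := \bar{P}\times_{Y} Y_{D,r}$ is a $\Gamma$-torsor on $Y_{D,r}$, and thus corresponds to a representable morphism $Y_{D,r}\to B\Gamma$. To lift this to a morphism into $[U/\Gamma]$ I would extend $\varphi$ to $\tilde{\varphi}\colon \tilde{P}\to U$: since $\tilde{P}$ is smooth (hence normal), $U$ is separated, and the composite $\tilde{P}\to Y_{D,r}\to Y \to \underline{\cX}$ already lifts étale-locally along $\underline{U}\to \underline{\cX}$ thanks to the hypothesized extension of $\underline{f}$, a Hartogs-type argument fills in $\tilde\varphi$ across the codimension-one stratum over $D$, while $\Gamma$-equivariance is automatic from its validity on the dense open $P$. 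Descent of $(\tilde{P}, \tilde\varphi)$ along $\tilde{P}\to Y_{D,r}$ then produces the desired representable extension $\tilde{f}\colon Y_{D,r}\to [U/\Gamma] = \cX$.

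Uniqueness up to unique isomorphism follows from the uniqueness statements used at each step: two extensions $(Y_{D,r}, \tilde{f})$ and $(Y_{D,r'}, \tilde{f}')$ give rise to two normal covers of $Y$ extending $P$, which must coincide by Abhyankar, so $r=r'$; the induced $\Gamma$-torsors on $Y_{D,r}$ are then canonically identified, and the equivariant maps to $U$ must agree since they agree on the dense open $Y\setminus D$ and $U$ is separated. The main obstacle I anticipate is a bookkeeping one, namely verifying that the integer $r$ produced locally is intrinsic to $f$ at $p$ and independent of the étale chart $[U/\Gamma]\to \cX$ one chooses, so that the local constructions glue to a morphism into $\cX$ rather than into some auxiliary chart; but since $r$ is characterized canonically as the order of the monodromy of the pulled-back $\Gamma$-torsor around $D$, this intrinsicality, and hence the global existence and uniqueness, follows.
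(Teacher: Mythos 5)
Your overall strategy parallels the paper's: pass to an \'etale chart $[U/\Gamma]$ of $\cX$ near $\underline f(p)$, identify the ramification index $r$ of the induced $\Gamma$-torsor over $Y\setminus D$, form the root stack to absorb the ramification, and then use the coarse-moduli extension hypothesis to extend the equivariant map. You substitute Abhyankar's lemma for the paper's analytic local fundamental group, which is a perfectly reasonable algebraization of the same idea, and your uniqueness discussion is fine. However, there is a genuine gap at the step where you claim a ``Hartogs-type argument fills in $\tilde\varphi$ across the codimension-one stratum over $D$.''

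Hartogs applies to extending across subsets of codimension $\geq 2$, whereas here the locus over $D$ has codimension $1$ in $\tilde P$, and a morphism from a normal scheme to an affine scheme certainly need not extend across a divisor (the pulled-back coordinate functions may have poles). What actually makes the extension work is precisely what your proof never invokes: the properness of the coarse moduli map. Concretely, the morphism $U\to\underline\cX$ is \emph{finite} (it is the composite of the finite quotient map $U\to[U/\Gamma]$ with $[U/\Gamma]\to\underline\cX$, which is proper by Keel--Mori and quasi-finite, hence finite). Given this, the section of $\bar P\times_{\underline\cX}U\to\bar P$ over the dense open $P$ has closure which is finite and birational over the normal scheme $\bar P$, hence an isomorphism by normality (Zariski's Main Theorem), yielding the desired lift $\bar P\to U$. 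That finiteness of the slice over the coarse space is the crux — it is what the paper is extracting from the properness of $[Z/\Gamma]\to\underline{[Z/\Gamma]}$ — and simply labeling the step ``Hartogs'' does not supply it. A secondary, smaller imprecision: $\bar P\times_Y Y_{D,r}$ being a $\Gamma$-torsor is not the ``defining universal property'' of the root stack (which concerns $r$-th roots of $(\cO(D),s_D)$); it is an instance of Abhyankar for root stacks, and the fiber product must in general be normalized before it becomes \'etale over $Y_{D,r}$.
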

\begin{proof} 
Since this is a local problem in the \'etale topology,
we may assume that $\cX = [Z/\Gamma]$ for some  affine scheme  $Z$ with a 
finite group $\Gamma$ action (see \cite[Lemma 2.2.3]{AV}, \cite[Proposition 4.2]{KM}). Take an analytic neighborhood $U$ of 
$p$ such that the fundamental group of $U\setminus D$ is isomorphic to $\ZZ$. 
Let $Z_{U\setminus D}$ be the principal $\Gamma$-bundle on $U\setminus D$ obtained by pulling back $Z\ra\X$ via $f$.
We have the monodromy action of $\ZZ$ on $Z_{U\setminus D}$, which factors through an action 
of the group $\mu _r$ of $r$-th roots of unity for some positive integer $r$ and induces a monomorphism $\mu _r \ra \Gamma$.

Take an $r$ to $1$ covering $U'\ra U$ branched along $D$, with $U'$ nonsingular. Let $D' \subset U'$ be the reduced divisor corresponding
to $D$. On $U'\setminus D'$, the pull-back bundle $Z_{U'\setminus D'}$ is a trivial $\Gamma$-bundle. Choosing a
section of it, we get by composition a $\mu_r$-equivariant morphism from $U'\setminus D'$ to $Z$.

By assumption, the induced map $$U'\setminus D'\lra\underline{[Z/\Gamma]}$$ is extendible to $$U'\ra U \ra \underline{[Z/\Gamma]}.$$
Since the coarse moduli space  $[Z/\Gamma]\ra \underline{[Z/\Gamma]}$ is proper by \cite{KM}, after shrinking $U'$ if necessary, 
there is a branched covering $U'' \ra U'$  along $D'$ and an extension $U''\ra [Z/\Gamma]$, where $U''$ is nonsingular.
Using a trivialization of the pullback bundle $Z_{U''}$, we obtain a morphism from $U''$ to $Z$. 
This implies that there is a morphism $U'\ra Z$ through which the morphism $U''\ra Z$ is factored.
The morphism $U'\ra Z$ is $\mu_r$-equivariant since it is so generically.
It therefore descends to a representable morphism $U_{D, r}:=[U'/\mu _r]   \ra [Z/\Gamma]$, as claimed in the Lemma. The uniqueness follows from
the separatedness of $[Z/\Gamma]$ and the uniqueness of $r$.
\end{proof}


 We note an immediate consequence of Lemma \ref{ext}.  If
$((C, {\bf x}), [u])$ is a prestable quasimap to $X$, with base locus $B$ (viewed as a reduced subscheme of $C$), then there is a canonical 
twisted curve $(C, {\bf x}\cup B)$ and a canonical representable morphism $[u]_{\reg} : (C, {\bf x}\cup B) \ra X $ 
which extends $[u]_{|_{C\setminus B}}$. Indeed, the required extension $\underline{[u]} :\uC\lra\uX$ exists by the prestable assumption and the fact that $\uX\lra\uX_0$ is projective.
As in Lemma 7.1.2 of \cite{CKM}, if $\beta _{\reg}$ denotes  the class of  $[u]_{\reg}$,
then  
\begin{equation}\label{beta reg} 
(\beta - \beta _{\reg})  (L_\theta ) = \sum _{x\in C} l(x) .\end{equation}

We will need also the following family version of the above consequence.

\begin{Cor}\label{ureg}
Let $\Delta$ be a nonsingular curve.
Consider a $\Delta$-family of  prestable  quasimaps $((C, x_1,..., x_k), [u])$ to $X$ with base locus $B$.
Then, after possibly shrinking $\Delta$ 
 and making an \'etale base change there is 
a unique $\Delta$-family of {\em twisted stable maps $(C', x_1',..., x_k', b_1', ..., b_l', [u]_{\mathrm{reg}})$ into} $X$,
together with an isomorphism $$\varphi: ((C' \setminus \bigcup \{b' _j\} _j ), x_1',..., x_k') \ra ((C \setminus B) , x_1,..., x_k)$$ over $\Delta$ making
$[u]\circ \varphi = [u]_{\mathrm{reg}}$ on $C'\setminus \bigcup \{b' _j \}_j $.  
\end{Cor}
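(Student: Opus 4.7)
The plan is to reduce the statement to a codimension-one local problem around each component of the base locus and then apply Lemma \ref{ext} in families.

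\textbf{Structure of the base locus.} The base locus $B\subset C$ is closed in $C$ and proper with zero-dimensional fibers over $\Delta$, hence finite over $\Delta$. After shrinking $\Delta$ and passing to an \'etale base change, I may assume $B=\bigsqcup_{j=1}^{l} b'_j(\Delta)$ is a disjoint union of sections $b'_j:\Delta\ra C$. By the prestable hypothesis, $B$ is disjoint from the markings $x_i$ and from the nodal gerbes of $C$, so $C$ carries no nontrivial orbifold structure along any $b'_j(\Delta)$; after further shrinking, each $b'_j(\Delta)$ is a smooth codimension-one subvariety of a smooth open neighborhood in $C$.

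\textbf{Extension of the coarse moduli map.} The composition $C\ra\X\ra\uX_0$ factors through the coarse moduli as a morphism $\uC\ra\uX_0$; over $\uC\setminus\underline{B}$ this factors through $\underline{[u]}:\uC\setminus\underline{B}\ra\uX$. Since $\uX\ra\uX_0$ is projective, the valuative criterion applied at the generic point of each $b'_j(\Delta)$ (where $\uC$ is smooth, so the local ring along the divisor is a DVR) provides an extension across that codimension-one locus; shrinking if needed, this produces a morphism $\uC\ra\uX$ extending $\underline{[u]}$, which is exactly the hypothesis of Lemma \ref{ext}.

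\textbf{Application of Lemma \ref{ext} and gluing.} For each $j$, take a smooth open neighborhood $Y_j$ of $b'_j(\Delta)$ in $C$ and apply Lemma \ref{ext} with $D=b'_j(\Delta)$ and $f=[u]|_{Y_j\setminus D}$. This yields a positive integer $r_j$ and a representable extension $\tilde{u}_j:(Y_j)_{D,r_j}\ra X$, unique up to unique isomorphism. Let $C'$ be the twisted curve obtained from $C$ by root-stacking along each $b'_j(\Delta)$ with order $r_j$; the root-stack construction introduces a cyclic gerbe at each section, which we name $b'_j$. The local representable morphisms $\tilde{u}_j$ agree with $[u]|_{C\setminus B}$ on overlaps by the uniqueness clause of Lemma \ref{ext} combined with the separatedness of $X$, so they glue to a representable morphism $[u]_{\mathrm{reg}}:C'\ra X$. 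The isomorphism $\varphi:(C'\setminus\bigsqcup b'_j,\mathbf{x}')\ra(C\setminus B,\mathbf{x})$ is tautological from the root-stack construction, and $[u]\circ\varphi=[u]_{\mathrm{reg}}$ on that open by construction. Stability of the resulting twisted map is inherited from the prestable quasimap together with the newly adjoined markings at $\mathbf{b}'$, which ensure the required ampleness on any previously unstable component (note that no component of $C$ can map entirely into the unstable locus since the base locus is zero-dimensional, so $[u]_{\mathrm{reg}}$ is non-constant on each component on which $[u]$ was non-constant). Uniqueness of $(C',\mathbf{x}',\mathbf{b}',[u]_{\mathrm{reg}},\varphi)$ up to unique isomorphism follows directly from the uniqueness in Lemma \ref{ext}, which pins down both the twisting orders $r_j$ and the representable extensions.

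The main obstacle is the reduction to sections and the verification that the \'etale-locally constructed root stacks with their representable extensions glue into a single globally defined family of twisted curves with a globally defined representable morphism; this is a descent-and-locality issue, for which the uniqueness portion of Lemma \ref{ext} is exactly the required input.
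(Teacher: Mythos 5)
Your proof follows the paper's argument: reduce the base locus to disjoint sections $b'_j(\Delta)$ after shrinking $\Delta$ and an \'etale base change, extend $\underline{[u]}$ across $B$ using projectivity of $\uX\ra\uX_0$ at the generic points of the section divisors, and then apply Lemma~\ref{ext}. The additional discussion of gluing the local root stacks via the uniqueness clause of Lemma~\ref{ext} fills in detail that the paper leaves implicit, and is correct.

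One point deserves a caveat: the closing claim that the adjoined markings $\mathbf{b}'$ ``ensure the required ampleness on any previously unstable component'' is an over-claim. The hypothesis is only that the quasimaps are \emph{prestable}, so the domain may well contain a rational component carrying two special points, a constant map, and no base points; such a component receives no new markings and remains unstable in $(C',\mathbf{x}',\mathbf{b}',[u]_{\mathrm{reg}})$. What the markings at $\mathbf{b}'$ do give is that $[u]_{\mathrm{reg}}$ is defined on all of $C'$ (a twisted map into $X$, not merely into $\X$), and that components \emph{containing} base points become stable. This mirrors a looseness in the corollary's own phrasing (``twisted stable maps''): the paper's proof, which just invokes Lemma~\ref{ext}, likewise does not argue stability, and the downstream application in the properness proof performs a separate stabilization step contracting unstable rational tails. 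So the approach is essentially the paper's; just do not lean on stability of $[u]_{\mathrm{reg}}$ itself.
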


\begin{proof} After shrinking $\Delta$ and \'etale base change, we may assume that the base locus $B$ forms (possibly empty) sections $b_j$, $j=1,..., l$, of $C\ra \Delta$, 
disjoint from the markings ${\bf x}$ and the nodes in the fibers.
Since $[u]$ is a morphism $C\ra \X$, it induces a morphism $C\ra \uX _0$, which is compatible with 
$C\setminus B \ra \uX $. Since $\uX\ra \uX _0$ is projective
and $B$ is a smooth divisor of $C$, we may extend $C\setminus B \ra \uX $ at the generic point of every component of $B$. So, after shrinking $\Delta $ again, 
we may assume that there is  an extension $C\ra \uX$ . Applying Lemma \ref{ext} concludes the proof.
\end{proof}

Now we come to the main result of this section. A related statement in the case of one specific GIT target is the main result of \cite{Deop}.

\begin{Thm}\label{Found_Thm}
The category fibered in groupoids  $\Qke$ of  genus $g$, $k$-pointed $\ke$-stable quasimaps to $X$ of
 class $\beta$ is a DM stack, proper over $\uX _0$. Further if $W$ is a locally complete intersection (LCI) variety, then $\Qke$ is equipped with a
canonical perfect obstruction theory.
\end{Thm}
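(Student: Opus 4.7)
My plan is to realize $\Qke$ as an open substack of a Hom stack sitting over the moduli stack $\fMtw$ of prestable twisted curves, combining the quasimap formalism of \cite{CKM} with the orbifold framework of \cite{AGV}. Let $\frC\to\fMtw$ be the universal twisted curve. By the standard representability theorem for Hom stacks into DM targets, the relative mapping stack $\Hom_{\fMtw}(\frC,\fMtw\times\X)$ is algebraic and locally of finite presentation, and the locus of \emph{representable} morphisms of fixed class $\beta$ is an open and closed substack of it. On this locus, the prestability condition (base locus disjoint from markings and nodes), the ampleness of the $\QQ$-line bundle in \eqref{ample} on the coarse curve, and the length bound $\ke l(x)\le 1$ are all open, and their intersection is precisely $\Qke$.

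To obtain the Deligne--Mumford property I would check that the automorphism groups of $\ke$-stable quasimaps are finite and reduced: the ampleness of \eqref{ample} bounds the automorphisms of the coarse data $(\uC, \underline{x}_1,\ldots,\underline{x}_k)$, and only finitely many extra automorphisms arise from the finite cyclic bands at the twisted points. Boundedness of $\Qke$ then follows by the method of \cite[\S 7]{CKM}: the ampleness condition together with $\ke l(x)\le 1$ bounds the arithmetic genus of the domain and the degree of $[u]$ on every irreducible component, while boundedness of the underlying twisted curves of fixed genus and number of markings is provided by \cite{AGV}. Thus $\Qke$ is a DM stack of finite type.

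The heart of the argument is properness over $\uX_0$, which is the step I expect to be the main obstacle. I would verify the valuative criterion over a DVR $R$ with fraction field $K$: given an $\ke$-stable family over $\Spec K$ together with a compatible morphism $\Spec R\to\uX_0$, Corollary \ref{ureg} produces, after an \'etale base change, a canonical limit as a twisted stable map $[u]_{\reg}$ over $\Spec R$, i.e., an $\infty$-stable quasimap. I would then run the inductive contraction procedure of \cite[\S 4]{CKM}: rational components of the special fiber on which the ampleness in \eqref{ample} fails are contracted, and each such contraction produces a base point whose length records the lost degree; the bound $\ke l(x)\le 1$ guarantees both termination and that the resulting object is $\ke$-stable. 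Lemma \ref{ext} is the precise tool that allows each contraction to be carried out inside the twisted category with a representable extension to $\X$. Uniqueness of the limit gives separatedness and existence gives properness; target properness over $\uX_0$ follows because $\uX\to\uX_0$ is projective and $\uC\to\uX_0$ factors through $\uX$.

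Finally, when $W$ is LCI, a canonical perfect obstruction theory on $\Qke$ is obtained relative to the smooth base $\fMtw$. With $\pi:\cC\to\Qke$ the universal twisted curve and $[u]:\cC\to\X$ the universal quasimap, the LCI hypothesis ensures that $\TT_\X$ is a perfect complex of the appropriate amplitude, and the standard construction $(R\pi_*[u]^*\TT_\X)^\vee\to\LL_{\Qke/\fMtw}$ defines the required two-term perfect relative obstruction theory via deformation theory of sections, exactly as in \cite[\S 4]{CKM}. Since $\fMtw$ is smooth, this descends to an absolute perfect obstruction theory on $\Qke$, yielding the virtual fundamental class.
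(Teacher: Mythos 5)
Your overall plan follows the same decomposition as the paper: algebraicity via an open substack of a Hom stack over $\fMtw$, the Deligne--Mumford property via finiteness of automorphisms, boundedness, properness via the valuative criterion and the stable-map limit $[u]_{\reg}$, and a relative perfect obstruction theory when $W$ is LCI. At that level of description the two arguments coincide. However, there are three places where your sketch glosses over points that the paper handles with specific tools, and one of them is a genuine gap.

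First, a small but real inaccuracy: $\X=[W/G]$ is an Artin stack, not a Deligne--Mumford stack, since $G$ is a positive-dimensional reductive group. So you cannot invoke a ``representability theorem for Hom stacks into DM targets.'' The paper instead applies Lieblich's theorem (Prop.\ 2.11 of \cite{Lieb}) for $\Hom_S(\frC\times_{\fMtw}S, [W\times S/G])$ together with Lemma C.5 of \cite{Tw-AOV} to descend over the stack $\fMtw$. The conclusion you want is correct, but the tool must be for mapping into quotient stacks.

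Second, and more substantively, your boundedness argument is too vague to close the gap. Saying that the ampleness and length conditions ``bound the degree of $[u]$ on every irreducible component'' bounds the combinatorics of the domain, but it does not by itself show that for a \emph{fixed} twisted curve $C$ the stack of quasimaps to $\X$ of bounded degree is quasi-compact. The paper reduces this to the non-orbifold boundedness result (Theorem 3.2.4 of \cite{CKM}) by choosing, following Kresch--Vistoli \cite{KV}, a projective one-dimensional variety $\tilde C$ with a finite flat cover $\tilde C\to C$, and then identifying $\Hom(C,\X)$ via fppf descent as a fiber product of $\Hom(\tilde C,\X)$'s. This reduction step is the missing ingredient; without it you have not reduced boundedness in the twisted case to anything known.

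Third, your account of properness misattributes the role of Lemma \ref{ext}. That lemma (and Corollary \ref{ureg}) is used to \emph{extend} the quasimap across the base locus and produce $[u]_{\reg}$ as a stable-map limit, not to carry out contractions. The new phenomenon in the twisted case arises in the contraction step of \cite[Theorem 7.1.6]{CKM}: the unstable rational tails of the completed twisted curve may have \emph{stacky} nodes, and one cannot directly contract a component meeting the rest of the curve in a stacky node. The paper first removes the stack structure at such a node, turning the tail into a genuine $(-1)$-curve, and only then contracts. This is the modification one must make; Lemma \ref{ext} plays no role here.

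The obstruction-theory paragraph is fine in substance. The paper constructs the $\sigma$-relative theory over $\fBun_\G^{\tw}$ via the universal section $u$ of $\cP\times_\G W\to\cC$ and then compares it with the $\fMtw$-relative theory $(R^\bullet\pi_*[u]^*\TT_{[W/G]})^\vee$ using the triangle $\mathfrak{g}\ot\cO_W\to\TT_W\to\TT_{[W/G]}|_W$; going directly relative to $\fMtw$, as you propose, is equivalent. Note, though, that the two-term perfectness also requires $W^{ss}=W^s$ to be nonsingular (this is a standing assumption stated explicitly just after the proof), a hypothesis you do not mention; it is not automatic from $W$ being LCI.
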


Precisely speaking, $\Qke$ is a priori a $2$-category. An arrow from $(C, \bx, [u])$ to $(C' , \bx ', [u'])$ over a morphism $S\ra S'$ between schemes
is a pair $(\varphi, \alpha)$ of a cartesian product $\varphi: C\ra C'$ over $S\ra S'$ preserving the order of markings and
a 2-morphism $\alpha: [u]\Rightarrow [u']\circ \varphi $. 
A 2-arrow from $(\varphi, \alpha)$ to $(\varphi ', \alpha ')$ is a 2-morphism $\sigma : \varphi \Rightarrow \varphi '$ compatible with $\alpha$ and $\alpha '$.
Since $C, C'$ are DM stacks containing dense open algebraic spaces, the morphisms $\sigma$ are unique if they exist,
by \cite[Lemma 4.2.3]{AV}.  Therefore the 2-category $\Qke$ is equivalent to a category.

In fact, $\Qke$ depends on the pair  $(\X, X)$, see \cite[\S 4.6]{CKM}. Therefore, a more precise notation would be
$Q_{g, k}^\ke ((\X, X), \beta)$, but we'll only use the latter when needing to emphasize this feature.

%

When $\ke > 1$ (or $\ke >2$ if $(g,k)=(0,0)$), 
$\Qke$ is nothing but the stack $\cK _{g, k}(X, \beta )$ of $k$-pointed twisted stable maps into $X$ of genus $g$ 
and class $\beta$ introduced by Abramovich and Vistoli in \cite{AV}. We will also use the notation $Q_{g, k}^{\infty} (X, \beta):=\cK _{g, k}(X, \beta )$ for these moduli stacks.

\subsection{Proof of Theorem \ref{Found_Thm}}\label{Found_Proof}

In this section we prove Theorem \ref{Found_Thm}.

\subsubsection{Algebraicity}
Let $\fQmap_{g,k} (X, \beta )$ denote the category fibered in groupoids parameterizing $k$-pointed genus $g$ quasimaps to $X$ of class $\beta$ (no (pre)stability condition imposed).
We will show that $\fQmap_{g,k} (X, \beta )$ is an Artin stack locally of finite presentation over $\uX _0$.

Let $\fMtw$ be the  category fibered in groupoids of $k$-pointed genus $g$ twisted curves. It is proven in \cite[Theorems 1.9, 1.10]{L-Ol} that it
is a smooth Artin stack, locally of finite type. 
Let $\frC \ra \fMtw$ be the universal curve. 
We can view $\fQmap _{g,k} (X, \beta )$ as an open substack of the stack $\Hom _{\fMtw} (\frC  , \X \times  \fMtw)$
 whose fiber over a scheme $S$ is the groupoid of 
 $1$-morphisms from $S$-families of $k$-pointed genus $g$ twisted curves to $\X$ (for the definition of Hom-stacks see
 \cite[C.1]{Tw-AOV}, \cite[\S 2.3]{Lieb}).
  Hence the desired statement follows from the following

\begin{Lemma}
The Hom-stack $\Hom _{\fMtw}(\frC   , \X \times \fMtw)$  is an Artin stack
locally of finite presentation over $\CC$.
\end{Lemma}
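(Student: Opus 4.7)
The plan is to apply a general representability theorem for Hom-stacks, whose hypotheses are readily verified in the present situation. The source morphism $\frC\to\fMtw$ is the universal $k$-pointed genus $g$ twisted curve, which by \cite{L-Ol} is proper, flat, and of finite presentation. The target $\X\times\fMtw\to\fMtw$ is obtained by base change from $\X=[W/G]$, and since $G$ and $W$ are both affine, $\X$ is an Artin stack of finite type over $\CC$ with \emph{affine} diagonal. In particular, the diagonal is quasi-compact and separated, so the general Hom-stack representability results (Olsson, Aoki; see also \cite{Tw-AOV, Lieb} cited above) apply and give the conclusion.

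For a more hands-on argument, I would exploit the quotient presentation $\X=[W/G]$. A $1$-morphism from a twisted curve $C$ to $\X$ is the datum of a principal $G$-bundle $P\to C$ together with a $G$-equivariant morphism $P\to W$. This produces a natural forgetful morphism
\[
\Hom_{\fMtw}(\frC,\X\times\fMtw)\;\lra\;\fBun_G(\frC/\fMtw),
\]
and it suffices to check that the target is an Artin stack locally of finite presentation and that the morphism is relatively representable. Algebraicity and local finite presentation of $\fBun_G(\frC/\fMtw)$ follow from the standard theory of moduli of principal bundles on a proper flat family, transplanted to twisted curves. The fiber over a $G$-bundle $P$ parameterizes $G$-equivariant maps $P\to W$, i.e., sections of the associated fiber bundle $P\times^G W\to C$; since $W$ is affine, this bundle is affine over $C$, and the Weil restriction (or Grothendieck linearization) argument shows that sections of an affine morphism along a proper flat base are represented by an affine scheme of finite presentation. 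Relative representability of the forgetful map then gives the claim.

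The main technical point to watch is that the source here is a \emph{twisted} curve rather than an ordinary nodal curve, so one needs the representability inputs (for $\fBun_G$ on families of twisted curves and for spaces of sections of affine morphisms whose source is a tame DM stack) in that generality. This is by now part of the standard toolkit \cite{Tw-AOV,Lieb}: properness of $\frC\to\fMtw$ and affineness of $P\times^G W\to C$ are unaffected by the presence of cyclotomic gerby structure at the nodes and markings, so no new difficulty arises and the two arguments above carry through verbatim.
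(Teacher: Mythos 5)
Your first paragraph is essentially the paper's argument. The paper makes the base-change step explicit: it takes a smooth atlas $S\to\fMtw$ with $S$ a scheme locally of finite type over $\CC$, identifies $S\times_{\fMtw}\Hom_{\fMtw}(\frC,\X\times\fMtw)$ with $\Hom_S(\frC\times_{\fMtw}S,\,\X\times S)$, applies Lieblich's Proposition 2.11 over $S$ (using that $S$ is excellent and that $\X\times S=[W\times S/G]$ is a quotient stack), and then descends via Lemma C.5 of \cite{Tw-AOV}. You compress this into ``the general representability results apply''; that is the right idea, but if you want Lieblich's theorem specifically you should note that its hypothesis is that the target be a quotient stack (which $[W/G]$ is), rather than merely that the diagonal be affine, and you should make the atlas/descent step explicit since the base $\fMtw$ is an Artin stack and not a scheme. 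Your second, ``hands-on'' argument through the forgetful map to $\fBun_G(\frC/\fMtw)$ is a genuinely different route: it reduces the problem to algebraicity of $\fBun_G$ on families of twisted curves plus representability of the space of sections of the affine morphism $P\times^G W\to C$, exactly in the spirit of the original non-orbifold construction in \cite{CKM}. That decomposition buys you a more transparent picture (and it is the one the paper itself later uses to set up the relative obstruction theory over $\fBun_G^{\tw}$), at the cost of needing the twisted-curve versions of two inputs ($\fBun_G$ and Weil restriction of affine morphisms) instead of a single black-box Hom-stack theorem. Both arguments are correct; one small point worth flagging is that the Hom-stack in the statement parameterizes arbitrary $1$-morphisms $C\to\X$, not just representable ones, so no representability condition needs to be built into the section-space argument — representability is imposed afterwards as an open condition when cutting out $\fQmap_{g,k}(X,\beta)$.
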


\begin{proof}
Let $S$ be a scheme locally of finite type over $\CC$, with a smooth surjective
morphism $S\ra \fMtw$. Then the fiber product $S\times _{\fMtw}
\Hom _{\fMtw} (\frC   , \X \times \fMtw)$ is equivalent to $\Hom _S(\frC \times _{\fMtw}S, \X \times S)$,
which is an Artin stack  locally of finite presentation over $S$ by Proposition 2.11
in \cite{Lieb} since $S$ is an excellent scheme over $\CC$ 
and $\X\times S$ is the quotient stack $[W\times S / G]$. 
Thus, by Lemma C.5 in \cite{Tw-AOV} we conclude the proof. \end{proof}

\subsubsection{Automorphisms}
So far we have shown that $\Qke$ is an Artin stack. The proof of
 \cite[Proposition 7.1.5]{CKM} shows that
 a $\ke$-stable quasimap over a geometric point has no infinitesimal automorphisms. Indeed, the argument given there only involves the base locus of the stable quasimap, and is therefore insensitive to the stack structure of the domain curve. It follows that
the diagonal of $\Qke$ is formally unramified, and hence  $\Qke$
is Deligne-Mumford.  


\subsubsection{Boundedness}
For $d\in \frac{1}{\e}\ZZ_{\ge 0}$, 
we show that 
\[ \Qked := \coprod _{\beta: \beta (L_\theta) =d} \Qke , \] which is locally of finite presentation over $\CC$, is quasi-compact over $\CC$
and hence of finite type over $\CC$.

For a fixed twisted curve $D$ and $d\in \frac{1}{\e}\ZZ_{\ge 0}$, 
$\fQmap _{d} (D, X)$ will denote the substack of $\Hom (D, \X)$   
parameterizing quasimaps $[u]$ from $D$ to $X$ for which $\deg [u]^*(L_\theta )=d$.

(i) {\em Boundedness of $\fQmap _{d}(C, X)$}: Let $(C, {\bf x})$ be a twisted curve and choose a projective 1-dimensional 
variety $\tilde{C}$ with a degree $l_C$ finite flat morphism $\tilde{C} \ra C$ (see \cite{KV}).
By pullback, there is a natural morphism  $\Hom (C, \X)\ra \Hom (\tilde{C}, \X)$. 
Since   $\tilde{C}\ra C$ is fppf, by the effective descent for principal $G$-bundles and morphisms we see that
$$\Hom (C, \X)\cong  \Hom (\tilde{C}, \X)\times _{ \Hom (\tilde{C}\times _C \tilde{C}, \X)      }  \Hom (\tilde{C}, \X). $$
On the other hand, by Theorem 3.2.4 of \cite{CKM} (applied to the normalization of $\tilde{C}$) the moduli stack $\fQmap_{l_{C}d} (\tilde{C}, X)$ 
is quasi-compact. It follows that the stack $\fQmap_{d}(C, X)$ is quasi-compact.

(ii) {\em Boundedness of the domain curves}: 
The boundedness of topological types of coarse moduli spaces  of possible domain curves follows from \eqref{Bdd_Domain}
and hence we obtain the boundedness of the possible domain twisted curves.

\medskip

Now the quasi-compactness of $\Qked$ follows from (i) and (ii):
First, for each $(C, \bx) \in \fMtw (\CC )$, using (i), we take a finite type scheme $U_{(C, \bx )}$ 
smooth over $\Qked$ containing  all quasimaps  with the domain curve $(C, \bx)$.
Next, by (ii) there is a  finite collection  $\{ U_{(C, \bx)} \}_{(C, \bx)}$ surjectively covering $\Qked$.

\begin{Rmk}
The boundedness of $\Qked$ implies that there are only finite many $\beta \in \Eff (W, G, \theta )$ with $\beta (L_\theta ) =d$.
\end{Rmk}


\subsubsection{Properness}
To prove the properness of $\Qke \ra \uX _0$, 
it is enough to check the valuative criterion with discrete valuation rings since the stack is already shown to be of finite type over $\CC$. 
It is straightforward to see that the valuative criterion for separatedness follows from 
an argument identical to the one given in \cite[\S 4.1] {CK}.

Once we know the existence of $[u]_{\reg}$ from Corollary \ref{ureg}, the valuative criterion of properness
can be checked  by the proof of Theorem 7.1.6 in  \cite{CKM}. The argument
requires the properness of the moduli stack $\cK _{g, n}(X, \beta)$ of twisted stable maps, which is due to Abramovich and Vistoli \cite{AV}.
There is a slight modification as follows.
In the proof of \cite[Theorem 7.1.6]{CKM}, we need to contract unstable rational tails attached to the central fiber 
of a completed twisted curve $\widehat{C}$ over a nonsingular curve $(\Delta, 0)$. Those rational tails might have stacky nodal 
points. When we contract such unstable rational tails, we remove the stack structure 
of such nodes to make the rational tails ($-1$)-curves and then we contract those ($-1$)-curves.

\subsubsection{Obstruction Theory}
This part is identical to the corresponding one in \cite{CKM}.
If we write $$\fB un_\G^{\tw} := \Hom _{\fMtw} (\frC  , B\G  \times {\fMtw})$$ which is smooth over $\fMtw$ since the obstruction
vanishes (see, e.g., \cite[Proposition 2.1.1]{CKM} for the vanishing of the obstruction), there is a natural forgetful morphism 
 $\sigma: \Qke \ra \fB un _\G ^{\tw} $.  
 We describe a canonical $\sigma$-relative obstruction theory.
 
 Let $\pi : \mathcal{C}\ra \Qke$ be the universal curve. The universal morphism $[u]: \cC \ra \X = [W/G]$ induces
the universal principal $G$-bundle $\cP \ra \cC$ by the pullback of the principal $G$-bundle $W\ra [W/G]$ 
(here $\cP$ is an algebraic space over $\CC$ since $[u]$ is
representable). In turn, this determines the  universal fiber bundle $$\rho : \cP \times _\G W := [\cP \times W /G] \ra \cC$$
and the {\em universal section} $u: W \ra \cP \times _G W$ of $\rho$. This is summarized in the diagram:
\[ \xymatrix{ \cP \times _G W \ar[r]_{\ \ \ \rho} & \cC \ar[r]_{\pi \ \ \ \ \ } \ar[d]_{[u]} \ar@/_0.7pc/[l]_{\ \ \ \ u} & \Qke \ar[r]_{\ \ \ \sigma} & \fB un_\G^{\tw}  \\
                                                                        & [W/G]  .             &                                    &  } \]

 Let ${\mathbb T}_\rho$ be  the relative tangent complex of $\rho$. The  $\sigma$-relative obstruction theory
is given by the complex $(R^{\bullet}\pi _* u^* {\mathbb T}_\rho  )^\vee $.
As shown in Theorem 4.5.2 of \cite{CKM}, this complex is two-term perfect
if $W$ is LCI and $W^{s}$ is smooth.

\medskip
{\em From now on we will assume that $W$ is LCI and $W^s=W^{ss}$ is smooth}.
\medskip

Note that there are quasi-isomorphisms
$$R^\bullet\pi _* \left( \cP \times _G \mathfrak{g} \cong u^*(\cP \times _G (\mathfrak{g}\ot \cO _W))  \right) \simeq (\sigma ^* \mathbb{L}_{\fB un _G ^{\tw}/ \fMtw } [1])^\vee $$
and $\mathbb{T}_\rho \simeq \cP \times _G \mathbb{T}_W$.

Note also that on $W$ there is a natural distinguished triangle $$\mathfrak{g}\ot \cO _W \ra \mathbb{T} _W \ra \mathbb{T} _{[W/G]}|_{W}.$$
Therefore,
we obtain the perfect obstruction theory $$(R^\bullet \pi _* [u]^* \mathbb{T}_{[W/G]})^\vee$$
for $\Qke$ relative to the pure dimensional, smooth stack $\fMtw$ as in \cite[\S 5.3]{CK}. 
Further, the two relative perfect obstruction theories determine the same absolute perfect obstruction theory on $\Qke$ and the virtual classes associated to all three obstruction theories coincide.


\subsection{Basic Properties and Variants}

\subsubsection{Expected dimension} 
Since the \'etale gerbe markings are away from base locus for $\ke$-stable quasimaps, 
there are natural evaluation morphisms
\[ ev_i : Q^\ke _{g, k}(X, \beta ) \ra \bIX ,\  ((C, x_1, ..., x_k), [u]) \mapsto [u]_{|_{x_i}} \ \ \ i=1, ..., k,\]
to  the rigidified cyclotomic inertia stack $\bIX$ of the DM-stack $X$, as explained in \cite[\S 4.4]{AGV}. 
The stack $\bIX$, which  parameterizes representable maps from gerbes banded by finite cyclic groups to $X$,
is a smooth (resp. proper) stack over $\CC$ if $X$ is a smooth (resp. proper) stack over $\CC$ (see \cite[Corollary 3.4.2]{AGV}).

Let $\coprod _{c\in R} X_c$ denote  the connected component decomposition of $\bIX$ for some index set $R$
and, for $c_i \in R, i=1, ..., k$, let
\[ Q^{\ke}_{g, k}(X, \beta; c_{1},..., c_k ) := (\prod _i ev_{i} )^{-1} ( \prod _{i} X_{c_i}).  \]
Its virtual dimension is 
\begin{equation}\label{dim}  k+(1-g)(\dim X - 3) + \beta(\det \mathbb{T}_{[W/G]}) - \sum _{i=1}^k \mathrm{age} X_{c_i} \end{equation}
by Riemann-Roch for twisted curves (\cite[Theorem 7.2.1]{AGV}).
The age is defined as follows. Let $(\bar{x}, g)$ be a geometric point of $X_{c}$, with $r$ the order of $g\in \mathrm{Aut}(\bar{x})$.
The age of $X_c$ is
\[ \sum _{j=0}^{r-1} \frac{j}{r}\dim E_j \]  where $E_j$ is the eigenspace of $T_{\bar{x}}X$ of the induced $\mu _r$-action with
eigenvalue $e^{2\pi \sqrt{-1} j/ r }$.

\subsubsection{Trivializations of gerbe markings}

As in \cite[\S 6.1.3]{AGV}, one may construct 
the moduli stacks whose objects over a scheme $S$ are $S$-families of  $\ke$-stable quasimaps {\em with sections of the gerbe markings}.
We will not use these stacks.

%
%

\subsubsection{Graph spaces}

For $g,k\ge  0$ and $\ke\geq 0+$, define the graph space $QG^{\ke}_{g, k, \beta}(X)$ to be the moduli
stack of {\em $\ke$-stable graph quasimaps}. By a (resp. prestable) graph  quasimap we mean the data  $$((C, x_1, ..., x_k), P, [u]:=([u]_1, [u]_2) ) $$ 
with $((C, x_1, ..., x_k),  [u]_1)$ a $k$-pointed, genus $g$ (resp. prestable) quasimap to $X$ 
and a map $[u]_2: C \ra \PP ^1$ for which $\underline{[u]_2} : \uC \ra \PP ^1$ is a degree $1$ map,
that is, there is a unique component of $\uC$ isomorphic to $\PP ^1$ under $\underline{[u]_2}$.
For $\ke\in \QQ_{>0}$, the $\ke$-stability for a prestable graph quasimap is defined by imposing the requirements that
\begin{equation}\label{graph-stability}\omega _{\uC} ( \sum \underline{x}_i)\ot  (\phi_* ([u]_1^* L_\theta )^\e) ^{\ke/\e} \ot  \underline{[u]_2}^* \mathcal{O}_{\PP ^1} (3) \;\; \text{is ample}\end{equation}
and that 
\begin{equation}\label{graph-stability2}\ke l(x) \le 1,\;\;\text{for every $\CC$-point}\; x\in C.\end{equation} 
Again, $l(x)$ in \eqref{graph-stability2} is the length of the quasimap at $x$ defined in \cite[\S 7.1]{CKM}. 
When the requirement \eqref{graph-stability} is true for every small enough $\ke \in \QQ_{>0}$, we say that the graph quasimap is {\em $(0+)$-stable} 
(the length inequality imposes no condition and is discarded for $\ke=0+$).

For $\ke \in [0+, \infty )$, by the same argument as in \S\ref{Found_Proof}, $\QGke$ is a finite type, proper DM-stack over $\uX _0$. 
The universal family of $\ke$-stable graph quasimaps consists of the universal principal bundle $\cP$ on the universal curve $\pi :\cC\lra QG^{\ke}_{g, k, \beta}(X)$
and the universal map 
$$u=(u_1,[u_2]) :\cC\lra (\cP \times _\G W) \times \PP ^1,$$
with $u_1$ the universal section of $\rho : \cP \times _\G W\lra \cC$.
If we let $$\tilde{\rho} :  (\cP \times _\G W) \times \PP ^1 \ra \cC$$ be the composition of $\rho$ with the projection to the first factor,
then the perfect obstruction theory relative to
$\fB un _{\G} ^{\tw}$ remains of the
same form $ (R^{\bullet}\pi _* u^* {\mathbb T}_{\tilde{\rho}}  )^\vee$.

\subsubsection{Localization}

Assume $W$ has an action by an algebraic torus $T$, commuting with 
the $G$-action. This induces $T$-actions on $\cX$, $X$, $\uX_0$, $\bIX$, and on the moduli spaces $\Qke$. 
In this situation, there is a $T$-equivariant embedding of $\Qke$ into a
smooth Deligne-Mumford stack. The argument from  \S 6.3 of \cite{CKM} works. Therefore we may
apply the virtual localization theorem of \cite{GP} to $\Qke$ and $QG^{\ke}_{g, k, \beta}(X)$.

\subsubsection{Orbifold quasimaps with weighted markings}\label{weighted}

The results of \S \ref{Def_Qmaps}, \S \ref{Found_Proof} also provide the extension to orbifold targets of the theory of quasimaps
with weighted markings developed in \cite{BigI}.

 Let $\theta _0$ be
the minimal {\em integral} character on the ray $\theta \cdot \QQ_{>0} \subset \chi (G)_{\QQ}$
in the character group of $G$  with $\QQ$-coefficients. Then the
$\ke$-stability condition on quasimaps can be interpreted as stability with respect to
the {\em rational}  character $\ke\theta _0$, see 
Definition 2.6 and Proposition 2.7 (i) in \cite{BigI}.

Now for $\ke\in \QQ_{>0}\cup \{0+\}$ and $\delta _1, \dots ,\delta _m \in
(\QQ_{>0}\cap (0, 1])\cup \{0+\}$, consider the rational character
$$\uptheta  := \ke \theta _0+ \sum \delta _j \mathrm{id}_{\CC ^*}$$ of the
group $G\ti (\CC^*)^m$. Then the moduli space of genus $g$, $\ke$-stable quasimaps of class $\beta$
to $X$, with $k$ usual markings and $m$ markings weighted by $\delta _1,\dots , \delta _m$,  is 
identified with the moduli stack 
$$Q_{g, k}^{\uptheta} ([W^{ss} /G]\times [\CC ^{ss}/\CC^*]^m , (\beta, 1,\dots,1))$$
constructed in \S \ref{Def_Qmaps}.
Note that the domain curves carry nontrivial stack structure only at nodes and at the usual markings.

An important special case considered in \cite{BigI} is when
$\delta _{1} = \dots= \delta _m =\delta$ for some $\delta\in(\QQ_{>0}\cap (0, 1])\cup \{0+\}$. This gives a theory with {\em two} stability parameters $(\ke,\delta)$, for which the corresponding
moduli stacks are denoted $Q^{\ke,\delta}_{g,k|m}(X,\beta)$.

Similarly, there are graph spaces with weighted markings. 

By Theorem \ref{Found_Thm}, the moduli spaces with weighted markings are DM stacks, proper
over the affine quotient, and carrying the canonical perfect obstruction theory for LCI $W$.



\section{Quasimap Theory}


\subsection{Quasimap Invariants}

We extend the quasimap theory for the orbifold case $(\X = [W/G], X=[W^{ss}/G])$, closely following \cite{AGV, CKM, CKg0}.
Fix an algebraic torus $\T$ action on $W$, commuting with the given $\G$ action. We allow the case
when $\T$ is the trivial group. Assume that the $T$ fixed locus $\uX _0^T$ of the affine quotient $\uX_0$ is a finite set of points.
Let $K:=\QQ(\{\lambda_i\})$ be the rational localized $T$-equivariant cohomology of $\Spec\CC$, with $\{\lambda_1,\dots,\lambda_{\rank(T)}\}$ corresponding to a basis for the characters of $T$.
The Novikov ring is defined to
be \[ \Lambda _K := K [[\Eff (W, G, \theta ) ]] .\] 
We write $q^\beta$ for the element corresponding to $\beta$ in $\Lambda _K$ 
so that $\Lambda _K$ is the $q$-adic completion. We denote  by  $q\Lambda _K$ the maximal ideal generated by
$q^\beta$, $\beta \ne 0$.

Let $\{ \gamma _i\}$ be a basis of the $T$-equivariant {\it Chen-Ruan cohomology} of $X$,
$$H^*_{\mathrm{CR}, T} (X, \QQ):=\HA ^*_T(\bIX , \QQ ) $$ and
let $\{\gamma ^i\}$ be the dual basis \lq\lq with respect to the Poincar\'e pairing in the {\it non-rigidified }cyclotomic inertia stack $I_\mu X$ of $X$" in the sense that
$$\lan \gamma _i , \gamma ^j \ran _{\orb} := \int _{\sum  _{r\in \NN_{\ge 1} } r^{-1} [\bar{I}_{\mu _r}X]} \gamma _i \cdot \iota ^* \gamma ^j  = \delta _i ^j ,$$
with $\iota$ the involution of $\bIX$ obtained from the inversion automorphisms.
Note that $$\sum _{r=1}^{\infty}  r[ \Delta _{\bar{I}_{\mu _r}X}] = \sum _i \gamma _i \ot \gamma ^i \text{ in } H^*(\bIX \times \bIX , \QQ), $$
where the diagonal class $[\Delta _{\bar{I}_{\mu _r}X}] $ is obtained via push-forward of the fundamental class by $(\mathrm{id}, \iota): \bar{I}_{\mu _r}X \ra \bar{I}_{\mu _r}X \times \bar{I}_{\mu _r}X$.

Define $\psi_i$ to be the first Chern class of
the universal cotangent line whose fiber at $((C, x_1, ..., x_k), [u])$  is the cotangent space of
the coarse moduli $\underline{C}$ of $C$ at $i$-th marking $\ux _i$.  
For non-negative integers $a_i$ and  classes $\alpha _i \in \HA ^*_T (\bIX  , \QQ)$,  $t  = \sum _j t_j \gamma _j$ with formal variables $t_j$,
we write
\begin{align*}   \lan \alpha _1\psi ^{a_1}, ..., \alpha _k \psi ^{a_k} \ran^\ke _{g, k, \beta}   & :=
\int _{[Q^\ke_{g, k}(X, \beta )]^{\vir}}  \prod _i  ev _i ^* (\alpha _i) \psi ^{a_i}_i   ;\\ 
 \lla  \alpha _1\psi ^{a_1}, ..., \alpha _k \psi ^{a_k}   \rra_{k, \beta}^{\ke} 
       & := \sum _{m\ge 0}  \frac{1}{m!}  \lan \alpha _1\psi ^{a_1}, ..., \alpha _k \psi ^{a_k}, t, ..., t \ran^\ke _{g, k+m, \beta} ; \\ 
       \lla \alpha _1\psi ^{a_1}, ..., \alpha _k \psi ^{a_k}  \rra_{k}^{\ke} 
          & := \sum _{\beta, m}  \frac{q^\beta}{m!}   \lan \alpha _1\psi ^{a_1}, ..., \alpha _k \psi ^{a_k}, t, ..., t \ran^\ke _{g, k+m, \beta}\\ 
        & \in \Lambda _{K} [[\{t_j \}_j]]. \end{align*}

We may also define quasimap Chen-Ruan classes.
Write 
 \begin{equation} \label{tilde ev}
 (\ev _j)_* = \iota _* (\br _{j}(ev_{j})_*), 
 \end{equation}
where $\br _{j}$ is the order function of the band of the gerbe structure at the marking $j$.
Define a class in $H_*^T (\bIX )\cong H^*_T (\bIX)$ by
\begin{align*}  \lan \ka _1, ..., \ka _k, - \ran ^{\ke}_{g, \beta} &:= (\ev _{k+1})_* \left( ( \prod ev _i ^* \ka _i) \cap [\Qke]^{\vir}\right)  \\
                                    &= \sum _i \gamma ^i \lan \ka _1, ..., \ka _k, \gamma _i \ran ^{\ke}_{g, k+1, \beta }. \end{align*}
Since the evaluation maps are proper, these are well-defined without $T$-localization, even when the coarse moduli space $\uX$ is not projective.

\subsection{$J^\ke$-function}

As in \cite{CK, CKg0}, we use $\CC^*$-residues on graph spaces to define $J^\ke$-functions. However, simply copying \cite[Definition 5.1.1]{CKg0} in the orbifold setting would give a function with
values in the untwisted sector $H_T^*(X)$.  
To ensure that the $J^\ke$-functions take values in the full Chen-Ruan cohomology of $X$, including the twisted sectors, we will use instead
graph spaces with an extra marking.

First of all, we fix a $\CC ^*$ action on $\PP ^1$ given  by
$$t[\zeta_0,\zeta_1]=[t\zeta_0,\zeta_1],$$
for $t\in \CC ^*$, $[\zeta _0, \zeta _1] \in \PP ^1$.
We consider $QG^{\ke}_{g,k, \beta}(X)$ with the induced $\CC^*$ action as well as the induced $T$ action. This gives rise to
the canonical $ T\times \CC^*$-equivariant perfect obstruction theory on $QG^{\ke}_{g,k,\beta}(X)$ via the action of $ T\times \CC^*$ on the tangent complex
$ \mathbb{T}_{ \X} \boxtimes T_{\PP ^1} $.

We use the same notation for the evaluation maps on graph spaces
\[ ev_i : QG^{\ke}_{g,k,\beta}(X) \ra \bIX \times \PP ^1, \ \ \ i=1, ..., k.\]
Let $z$ denote the $\CC^*$-equivariant Euler class of the line bundle associated to the fundamental representation space
of $\CC^*$.

We define $T\times\CC^*$-equivariant invariants on the graph spaces as follows: for $\sigma _j \in \HA_T^*(\bIX) \otimes \HA ^*_{\CC ^*}(\PP ^1)$, 
$t \in \HA _T^*(\bIX ) \cong \HA_T^*(\bIX ) \ot 1  \subset \HA_T^*(\bIX ) \otimes \HA ^*_{\CC ^*}(\PP ^1) $
\begin{align*} \lan \sigma _1, ..., \sigma _k \ran _{k, \beta}^{QG^\ke}  & : = \int _{[QG^\ke _{0, k,\beta}(X)]^{\vir}} \prod _i  ev_i^* (\sigma _i) ;\\
      \lla \sigma _1, ..., \sigma _k \rra_{k, \beta}^{QG^\ke} & := \sum _{m\ge 0}  \frac{1}{m!}  \lan \sigma _1, ..., \sigma _k, t, ..., t \ran _{k+m, \beta}^{QG^\ke} ;\\
       \lla \sigma _1, ..., \sigma _k \rra_{k}^{QG^\ke} & := \sum _{\beta, m}  \frac{q^\beta}{m!}  \lan \sigma _1, ..., \sigma _k, t, ..., t \ran _{k+m, \beta}^{QG^\ke}.
       \end{align*}
       The integration is understood as equivariant push-forward to $\Spec\CC$, hence $\lan \sigma _1, ..., \sigma _k \ran _{k, \beta}^{QG^\ke} \in K[z]$ and the double brackets
       are elements of $K[[z]][[\mathrm{Eff}(W,\G,\theta)]][[\{t_i\}_i]]$.

In what follows, for an integer $k\ge 0$, $[k]$ will mean the index set $\{1, ..., k\}$.
 Let $F := \QGs ^{\CC ^*} $ denote  the $\CC ^*$-fixed substack of $\QGs$ 
(the maximal closed substack fixed by the action of every element of $\CC ^*$).
If  $(C, x_\star, \bx , [u])$ is a $\CC ^*$-fixed  $\ke$-stable graph quasimap, then
on the coarse curve of the distinguished component of $C$ we must have that $\underline{[u]_{\reg}}$ is a constant map to $\uX$.
The base locus of $[u]$, if nonempty, must be over $0:=[0,1]$ or $\infty:=[1,0]$ in the target $\PP ^1$. The same is true for the markings $(x_\star, \bx)$.

As in \cite[\S 4.2]{CKg0}, there are \lq\lq components" $F^{A_1, \beta _1}_{A_2, \beta _2}$ of $F$ corresponding to distributions of the markings and of the class $\beta$ over $0$ and $\infty$:
for $A_1\coprod A_2=[k]\cup\{\star\}$ and $\beta_1+\beta_2=\beta$, the closed substack $F^{A_1, \beta _1}_{A_2, \beta _2}$ parameterizes $\CC^*$-fixed $\ke$-stable graph quasimaps for which  markings in $A_1$ and the class $\beta_1$ are supported over $0$, while $A_2,\beta_2$ are supported over $\infty$.
In particular, we have a distinguished $\CC ^*$-fixed part $ F^{k, \beta}_{\star, 0}$
of $\QGs$. 
This closed substack  $ F^{k, \beta}_{\star, 0}$ consists of
$\ke$-stable graph quasimaps $((C, x_{\star}, \bx), [u] )$ such that only the marking $x_{\star}$ is over $\infty$ while the markings in $[k]$ and the entire class $\beta$ are over $0\in \PP ^1$.

Let $\eta _0$, $\eta _\infty$ be the $\CC ^*$-equivariant classes of $\PP ^1$ defined by the
property $$\eta _0 |_0 = z,\;\; \eta _\infty |_\infty = -z,\;\; \eta _0 | _{\infty} =0=\eta _\infty |_0.$$

\begin{Def} 1. Define the $J^\ke $-function by
\begin{align*}  J^{\ke} (t, q, z)  & := \sum _{\beta \in \Eff (W, G, \theta ), k\ge 0} q^\beta  (\widetilde{\pr _{\bIX}  \circ ev_\star})_*  \\
& \left(  (ev _\star) ^*(\eta _\infty) \frac{ \prod _{i=1}^k  ev _i ^* ( t ) }{k!} \frac{[F^{k, \beta}_{\star , 0}]^{\vir}}{e^{\CC ^*\times T} (N^{\vir}_{F^{k, \beta}_{\star , 0}/
QG ^\ke_{0, [k]\cup\{\star\}, \beta }(X)} )}  \right)   , \nonumber \end{align*}
where $\pr _X$ denotes the projection to $X$ from $X\times \PP ^1$ and the notation $(\widetilde{\pr _{\bIX}  \circ ev_\star})_*$ is as defined in \eqref{tilde ev}. 
The localization residue is taken as a sum over the 
connected components of $F^{k, \beta}_{\star , 0}$.

2. Define the $S^\ke $-operator by
\begin{align*}  &  S^{\ke} _t (z) (\gamma ) 
              := \sum _i  \gamma _i \lla \frac{\gamma ^i}{z-\psi } , \gamma \rra ^{\ke}_{0, \{\bullet, \star\} }                  \\
                                               & := \gamma + \sum _{\beta\ne 0 \, \mathrm{ or } \, k\ge 1} q^\beta  (\ev_\bullet)_*\left( \frac{(ev _\star)^* (\gamma )}{z-\psi _\bullet} 
                                               \frac{\prod _{i=1}^k ev_{i}^*(t) }{k!}  \cap [Q^\ke _{0, [k]\cup \{\bullet, \star\} }(X, \beta )]^{\vir}\right) 
                                            \\ & =   \gamma +  O (1/z) .               
                                               \end{align*}
\end{Def}

\begin{Rmk}\label{mod q}
\begin{enumerate}

\item
It is straightforward to check that  the definitions of $J^\ke$ and $S^\ke$ agree with those given for the same objects  in \cite{CK, CKg0} when $X$ becomes a scheme.

\item As noted before, properness of the evaluation maps implies that $J^\ke$ and $S^\ke$ are well-defined without $T$-localization for all targets.  

\item When $\beta=0$ the moduli spaces coincide for different values of $\ke$. 
Hence $J^{\ke} (t, q, z) $, $S^\ke_t (z) $ modulo $q$ do not depend on $\ke$.

\item  Consider Givental's symplectic space $$\mathcal{H}_{X}:=\HA_T^*(\bIX)\otimes_{\QQ} K((z^{-1})) [[\Eff (W, G, \theta ) ]].$$ 
The series $J^\ke$ is an element of $\mathcal{H}_{X}[[\{t_j\}_j]]$ and the operator $S_t^\ke$ may be viewed as a family (parametrized by the formal variables $t_j$) of endomorphisms of $\mathcal{H}_{X}$.

\end{enumerate}

\end{Rmk}


\subsection{Factorization}

For  an Artin stack $\mathcal{Y}$  over $\CC$, a twisted curve $C$ (with some gerbe markings) over $\CC$
and $\beta \in \Hom _{\ZZ} (\Pic \mathcal{Y}, \QQ )$,
we denote by $\Hom _{\beta}^{\mathrm{rep}} (C, \mathcal{Y})$
 the substack of $\Hom _{\Spec\CC}(C, \mathcal{Y})$ consisting of
representable $1$-morphisms $C\ra \mathcal{Y}$ with class $\beta$.

Let $\mu _r$ be the group of $r$-th roots of unity. 
We will consider a special twisted curve $$C_{r, -r}:=[\PP ^1/\mu _r]$$ where $\mu _r$ acts $\PP^1$ 
by $s\cdot [\zeta _0, \zeta _1] = [s\zeta _0, \zeta _1]$ for $s\in \mu _r$, $[\zeta _0, \zeta _1]\in \PP ^1$.

\begin{Lemma}\label{simple}  
Let $Y$ be a DM stack over $\CC$.
\begin{enumerate} 
\item 
Consider a representable morphism $[u]:=([u]_1, [u]_2)$ from a genus $0$ irreducible twisted curve $(C, x_1, x_2)$ over a scheme $S$ to $Y\times \PP ^1$
sending $x_1$ to $0$ and $x_2$ to $\infty$ under $[u]_2$, with class $(0, [\PP ^1])$. Then  for some positive integer $r$,
$C$ is canonically isomorphic to $C_{r, -r}\times S$ with $[u]_2$ as the coarse moduli space. 
 
\item  The closed substack $(\pr _{\PP ^1}\circ ev _1 )^{-1} (0) \cap (\pr _{\PP ^1}\circ ev_{2}) ^{-1} (\infty )$ of $\cK _{0, 2}(Y\times \PP ^1, (0, [\PP^1]))$
is isomorphic to the Hom stack $$\Hom ^{\mathrm{rep}}_{0}(C_{r,-r}, Y)$$ parameterizing 
representable $1$-morphisms from $C_{r,-r}$ to $Y$ with class $0$.

  \item The stack $\Hom ^{\mathrm{rep}}_{0}(C_{r,-r}, Y)$ 
 is isomorphic to  the cyclotomic inertia stack $\IY$ of $Y$, compatible with the rigidification map $\IY\ra \bIY$ and the two evaluation maps (with the involution).
 Here $\pr _{\PP ^1}$ denotes the projection to $\PP ^1$ from $Y \times \PP ^1$.
 
\end{enumerate}
\end{Lemma}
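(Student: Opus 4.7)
The plan is to establish (1) by fiberwise analysis and then deduce (2) and (3) by constructing the inverse functors suggested by (1). For (1), fix a geometric point $\bar{s}\in S$. Since $\uC_{\bar{s}}$ is irreducible of genus $0$, it is isomorphic to $\PP ^1$, and the degree one hypothesis forces $\underline{[u]_2}|_{\uC_{\bar{s}}}$ to be an isomorphism; since $[u]_2$ takes values in the scheme $\PP ^1$, on each fiber it factors through, and is canonically identified with, the coarse moduli map of $C_{\bar{s}}$. Being irreducible twisted with stacky structure only at the markings, $C_{\bar{s}}$ is a root stack of $\PP ^1$ at $\{0,\infty\}$ of some orders $r_1, r_2\ge 1$.

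The equality $r_1=r_2$ is the key point. Since $[u]_1$ has class $0$, its fiberwise restriction factors through an inclusion $B\Gamma\hookrightarrow Y$, where $\Gamma = \Aut _Y(\bar{x})$ for $\bar{x}$ the (unique) image on coarse moduli. The resulting $\Gamma$-torsor on $C_{\bar{s}}$ restricts to a $\Gamma$-torsor on the open scheme $\GG _m\subset \PP ^1$ whose monodromy is an element $g\in\Gamma$, well-defined up to conjugation. Extendibility of the torsor across $x_i$ forces $g^{r_i}=1$, while representability of $[u]$ at $x_i$ forces the induced $\mu _{r_i}\to\Gamma$ to be injective; together these give $\mathrm{ord}(g)=r_i$, and because the monodromies at $0$ and $\infty$ are mutually inverse, this forces $r_1=r_2=:r$. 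Over general $S$, the integer $r$ is locally constant, and the identification $\uC\cong\PP ^1\times S$ furnished by $[u]_2$ upgrades the fiberwise statement to the asserted canonical isomorphism $C\cong C_{r,-r}\times S$.

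For (2), I would construct mutually inverse functors. Given an object of $\cZ := (\pr _{\PP ^1}\circ ev_1)^{-1}(0)\cap (\pr _{\PP ^1}\circ ev_2)^{-1}(\infty)$, part (1) identifies $C$ with $C_{r,-r}\times S$ and $[u]_2$ with the coarse moduli projection, so $[u]_1$ supplies an $S$-point of $\Hom ^{\mathrm{rep}}_0(C_{r,-r},Y)$ (representability and class zero inherited from $[u]$). Conversely, pairing any representable class zero $f\colon C_{r,-r}\times S\to Y$ with the coarse moduli map to $\PP ^1$ yields a twisted stable map lying in $\cZ$, and the two assignments are quasi-inverse. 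For (3), the class zero argument factors any representable $f\colon C_{r,-r}\to Y$ as $C_{r,-r}\to B\Gamma\hookrightarrow Y$; since the $\Gamma$-torsor on $C_{r,-r}$ pulls back to the trivial torsor on the $\PP ^1$-cover, it further factors as $C_{r,-r}\twoheadrightarrow B\mu _r\xrightarrow{\phi}Y$ with $\phi$ representable. The assignment $f\mapsto\phi$ gives the functor to $I_{\mu _r}Y\subset\IY$, with inverse the composition with the canonical projection $[\PP ^1/\mu _r]\to B\mu _r$. Compatibility with rigidification is automatic; evaluation at $x_2$ differs from that at $x_1$ by inversion of the cyclotomic generator, producing the involution $\iota$. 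The main obstacle throughout is the monodromy argument of (1), which requires simultaneously both the class zero and representability hypotheses; the family-level globalization in (1) and the tracking of $2$-morphisms in (2) and (3) are then routine.
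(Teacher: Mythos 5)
Your proposal matches the paper's proof in all three parts: the monodromy argument in (1) (inverse monodromies at $0$ and $\infty$ have equal order, forcing $r_1=r_2$), the quasi-inverse functor construction in (2), and the factorization through $B\mu_r$ in (3). The one step you defer as ``routine'' --- fullness of the functor in (2) --- is handled in the paper by observing that any $\varphi\colon C_{r,-r}\times S\to C_{r,-r}\times S$ appearing in an arrow of the twisted stable maps category admits a unique $2$-morphism $\sigma\colon\varphi\Rightarrow\mathrm{id}$, so that composing $\alpha$ with $[u'](\sigma)$ converts the arrow $(\varphi,\alpha)$ into one of the form $(\mathrm{id},\alpha')$.
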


\begin{proof}
For 1): By \cite[Theorem 4.2.1]{AGV}, it is enough to show the case when $S=\Spec \CC$.
We may assume that $Y = B\Gamma$ for some finite group $\Gamma$. Denote by $\bar{x}_i$ the geometric points
associated to the gerbes $x_i$.
The monodromies of the induced $\Gamma$-bundle on an \'etale chart of $C$ around $\bar{x}_i$
are  exactly the homomorphisms $\mathrm{Aut}(\bar{x}_i)\ra \mathrm{Aut}(\overline{Y}) = \Gamma$, where
$\overline{Y}$ is the $\CC$-point of $Y$. Since the homomorphisms are monomorphisms, we are done.

For 2): There is a natural functor $\Phi$ from $\Hom ^{\mathrm{rep}}_{0}(C_{r,-r}, Y)$ 
to $\mathcal{D}:=(\pr _{\PP ^1}\circ ev _1 )^{-1} (0) \cap (\pr _{\PP ^1}\circ ev_{2}) ^{-1} (\infty )$.
It is essentially surjective by (1). It is clearly faithful. Given 
an arrow $(\varphi, \alpha)$ in $\mathcal{D}$ between objects $(C_{r, -r}\times S, [u])$ and 
$(C_{r, -r}\times S, [u'])$ over a scheme $S$, i.e., 
an $1$-isomorphism $\varphi: C_{r,-r}\times S \ra C_{r,-r}\times S$ over $S$ with a 2-morphism $\alpha: [u]\Rightarrow [u']\circ \varphi$,
first note that there is a unique 2-morphism $\sigma : \varphi \Rightarrow \mathrm{id}_{C_{r,-r}\times S}$.  Now define $\alpha ' := [u'](\sigma ) \circ \alpha$ so that
$\sigma : (\varphi, \alpha)\Rightarrow (\mathrm{id}_{C_{r,-r}\times S}, \alpha ')$. This shows
that the functor $\Phi$ is full.

For 3):  Every object  $[\PP ^1/\mu _r] \times S \ra Y$ in $\Hom ^{\mathrm{rep}}_{0}(C_{r,-r}, Y)$ is factored through 
a unique representable morphism $B\mu _r\times S \ra Y$ 
given by one of sections of the inclusions $x_1, x_2 \subset [\PP ^1/\mu _r]\times S$.
This yields a functor  $\Hom ^{\mathrm{rep}}_{0}(C_{r,-r}, Y) \ra \IY$. In fact, this functor is an equivalence 
according to the definition \cite[Definition 3.2.1]{AGV} of $\IY$.
\end{proof}

Our next goal is to prove the following factorization of graph 
quasimap double brackets.

\begin{Prop}\label{factorization_Prop}
Let $\gamma, \sigma \in \HA ^*_T(\bIX )$. Then  
\begin{align} \lla \gamma \ot \eta _0 , \delta \ot \eta _\infty \rra _{\{\star_1, \star _2\}} ^{QG^\ke}
 = \sum \lla\gamma ,  \frac{\gamma ^i}{z-\psi} \rra _{0, \{\star _1, \bullet \} } ^\ke \lla \frac{\gamma _i}{-z-\psi}, \delta  \rra _{0, \{ \check{\bullet} , \star _2 \} } ^\ke  \label{fact} \end{align}
 \end{Prop}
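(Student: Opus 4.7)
The plan is to apply the $\CC^*$-virtual localization theorem to each graph space $QG^\ke_{0,\{\star_1,\star_2\}\cup[m],\beta}(X)$ appearing in the LHS of \eqref{fact}. Because $\eta_0|_\infty = 0$ and $\eta_\infty|_0 = 0$, the only $\CC^*$-fixed substacks that survive the insertion of $ev_{\star_1}^*(\gamma\otimes\eta_0)\cdot ev_{\star_2}^*(\delta\otimes\eta_\infty)$ are those components $F^{A_1\cup\{\star_1\},\beta_1}_{A_2\cup\{\star_2\},\beta_2}$ for which $\star_1$ lies over $0$ and $\star_2$ over $\infty$. Here $A_1\sqcup A_2=[m]$ records the distribution of the auxiliary $t$-markings produced by the double bracket, and $\beta_1+\beta_2=\beta$ the splitting of the class. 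The combinatorial identity $\tfrac{1}{m!}\binom{m}{m_1}=\tfrac{1}{m_1!m_2!}$ rearranges the double sum over $(m,\beta)$ into a product of two sums, matching the shape of the RHS.

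Next I will describe the structure of each fixed component. By Lemma \ref{simple}(1), the unique distinguished irreducible component $C_0$ (the one whose coarse moduli maps isomorphically to $\PP^1$) has the form $C_0\cong[\PP^1/\mu_r]$ for some $r\ge 1$, with $\mu_r$-gerbes at the two preimages of $0$ and $\infty$. All markings in $A_1\cup\{\star_1\}$, base points over $0$, and degree $\beta_1$ are carried by a genus zero twisted quasimap $C_1$ glued to $C_0$ at the $\mu_r$-gerbe over $0$ via an extra gluing marking $\bullet$; symmetrically $C_2$ is glued over $\infty$ via $\check\bullet$. Consequently, up to \'etale covers the fixed locus decomposes as a disjoint union over $r$ of fiber products
\[
Q^\ke_{0,A_1\cup\{\star_1,\bullet\}}(X,\beta_1)\times_{\bar{I}_{\mu_r}X}Q^\ke_{0,A_2\cup\{\check\bullet,\star_2\}}(X,\beta_2),
\]
where the two arrows to $\bar{I}_{\mu_r}X$ are $ev_\bullet$ and $\iota\circ ev_{\check\bullet}$ (so that the gerbe bandings at the node agree after the involution).

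The virtual normal bundle of each fixed component splits into the moving deformations at the two nodes, the deformations of $C_0$ itself being $\CC^*$-rigid. The standard smoothing-of-a-node calculation yields the factor $\frac{1}{z-\psi_\bullet}$ at the node over $0$ and $\frac{1}{-z-\psi_{\check\bullet}}$ at the node over $\infty$, once the classes $\eta_0|_0 = z$ and $\eta_\infty|_\infty = -z$ are used to cancel the trivial contributions coming from $C_0$. To decouple the fiber product over $\bar{I}_{\mu_r}X$ into a genuine product I will substitute the Chen--Ruan K\"unneth expansion
\[
\sum_{r\ge 1}r\,[\Delta_{\bar{I}_{\mu_r}X}]=\sum_i\gamma_i\otimes\gamma^i,
\]
which turns the integral on the fixed locus into $\sum_i$(integral on the $0$-side with $\gamma^i$ inserted at $\bullet$)$\cdot$(integral on the $\infty$-side with $\gamma_i$ inserted at $\check\bullet$). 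Summing over $r$, over $A_1\sqcup A_2$, over $\beta_1+\beta_2=\beta$, and reassembling the power series in $q$ and $t$ precisely reproduces the RHS of \eqref{fact}.

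The main technical obstacle will be the careful bookkeeping of the $\mu_r$-gerbe factors of $r$: the orbifold Euler class of the tangent space to $[\PP^1/\mu_r]$ at a gerbe contributes $z/r$ rather than $z$; the orbifold pushforward $\tilde{ev}_*$ absorbs a factor of $r$ by its very definition \eqref{tilde ev}; and the diagonal expansion above carries the coefficient $r$. These three factors of $r$ must be checked to cancel consistently so that the final $z$'s appearing in $\frac{1}{\pm z-\psi}$ on the RHS are the usual non-renormalized ones. A secondary subtlety is the degenerate fixed locus where $\beta_1=0$ and $A_1=\emptyset$ (so that $\star_1$ sits directly on $C_0$ and the ``$0$-side quasimap factor'' collapses to $\bar{I}_{\mu_r}X$ itself, with no true node created). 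These contributions must be matched to the leading $\gamma$ term of $S^\ke_t(\gamma)=\gamma+O(1/z)$, using the standard $(g,k)=(0,2)$ unstable-moduli conventions; the argument is parallel to the non-orbifold treatment in \cite{CKg0}, with the orbifold signature entering only through the diagonal identity above.
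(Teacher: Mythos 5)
Your proposal follows essentially the same route as the paper: $\CC^*$-localization on the graph space, identification of the $\CC^*$-fixed loci as fiber products of two-pointed quasimap spaces glued along an $[\PP^1/\mu_r]$-component (via Lemma \ref{simple}), splitting of the virtual class across the separating nodes, node-smoothing contributions $\frac{\pm z - \psi}{r}$ to the virtual normal bundle, the diagonal identity $\sum_r r[\Delta_{\bar I_{\mu_r}X}] = \sum_i \gamma_i \ot \gamma^i$ to decouple the fiber product, and a case analysis of the degenerate fixed components. This is exactly the strategy of the paper's proof, which records the virtual class factorization separately as Proposition \ref{comp_vir} and then assembles the localization residues.

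One point in your bookkeeping of $r$-factors is imprecise. You attribute a factor of $r$ to the pushforward $\tilde{ev}_*$ "by its very definition \eqref{tilde ev}," but the right-hand side of \eqref{fact} is written purely in terms of the double brackets $\lla\,\cdot\,,\,\cdot\,\rra^\ke$, which are defined via $ev^*$ and involve no $\tilde{ev}$ or $\mathbf{r}$-weighting. The correct source of the compensating $1/r$ is the $\mu_r$ worth of ghost automorphisms at the separating node: the fixed locus is not $Q^\ke_{0,A_1\cup\{\bullet\}}\times_{\bar I_{\mu_r}X}Q^\ke_{0,A_2\cup\{\check\bullet\}}$ but rather $Q^\ke_{0,A_1\cup\{\bullet\}}\times_{\bIX} \IX \times_{\bIX} Q^\ke_{0,A_2\cup\{\check\bullet\}}$, with the middle factor $\IX$ (the non-rigidified inertia stack) carrying the extra $B\mu_r$, and this is precisely where the paper's Proposition \ref{comp_vir} inserts the class $[\IX]$ and uses the refined Gysin pullback $\Delta_2^!$. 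Your phrase "up to \'etale covers" does not quite capture this gerbe. Since you explicitly flag the $r$-accounting as something to be verified, this is an imprecision rather than a fatal error, but to make the argument rigorous you would need to keep the $\IX$ factor (equivalently, count the node automorphisms) rather than invoking $\tilde{ev}_*$.
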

 
We will use $\CC ^*$-localization, as in the proof Proposition 5.3.1 of \cite{CKg0}.
To this end, we first establish a factorization of the virtual classes of the $\CC^*$-fixed loci 
$F^{A_1, \beta _1}_{A_2, \beta _2}$.

We use $0$ and $\infty$ to label the  two markings $\star _1, \star _2$ by abusing notation.
Let $D^{\tw}(A_1, A_2)$ denote the locally finite type smooth Artin stack parameterizing genus $0$ nodal twisted curves with 
two {\em distinguished nodes}, splitting the curve into three twisted curves $(C_1, A_1 \cup\{  {\bullet} \}) $, 
$(C_{\br _{\bullet}, -\br_{\check{\bullet}}}, \{ \bullet, \check{\bullet}\})$, and  $(C_2, A_2 \cup \{ \check{\bullet}\})$, 
together with sections of the gerbes $\bullet, \check{\bullet}$
(see \cite[\S 5.1]{AGV}). 
Here $(C_{\br _{\bullet}, -\br_{\check{\bullet}}}, \{ \bullet, \check{\bullet}\})$ denotes $C_{r, -r}$ for $r$=the index of
the band of the gerbe $\bullet$.

Let $A_1\coprod A_2 = [k]\cup \{ 0, \infty \}$.
Assume $0\in A_1$, $\infty \in A_2$. 
Below, the unstable terms $Q^\ke_{0, \{0, \bullet\}, \beta _1=0}(X)$, $Q^\ke_{0, \{\infty, \check{\bullet}\}, \beta _2=0}(X)$ are 
understood as $\bIX$.

Consider the commuting diagram:

 \[ \xymatrix{ Q(A_1, A_2, \beta ) \ar[r]^{\ \ \ \Psi _1}_{\ \  \ \cong}  \ar[d]       &    F_{A_1, A_2, \beta}   \ar[d] \ar[r] &  QG_{\beta}^{\ke} \ar[d] \\
     D^{\tw}(A_1, A_2)    \ar[r]^{\ \ \ \Psi _2}_{\ \  \ \cong}                &   \fM _{A_1, A_2} ^{\tw, \CC ^*}   \ar[r]    & \fM _{0, k} ^{\tw} (\PP ^1, [\PP ^1]) , }\] 
where
 \begin{align*}  Q(A_1, A_2, \beta ) & := \coprod _{\beta _1 + \beta _2 = \beta }  \Qa \times _{\bIX} \IX \times _{\bIX}  \Qb ; \\
F_{A_1, A_2, \beta} & := \coprod _{\beta _1 + \beta _2 = \beta } F ^{A_1, \beta _1}_{A_2, \beta _2} ; \\
   QG_{\beta} ^{\ke} & := QG ^{\ke} _{0, k, \beta } (X ).
 \end{align*}
Further, $$ \fM _{0, k} ^{\tw} (\PP ^1, [\PP ^1]) \subset  \Hom _{\fM _{0, k}^{\tw}}(\mathfrak{C}, \PP ^1\times \fM _{0, k}^{\tw} )$$ 
is the (locally finite type and smooth) Artin stack of $k$-pointed, genus $0$, 
twisted (not necessarily stable) maps to $\PP ^1$ with class $[\PP ^1]$,
while $ \fM _{A_1, A_2} ^{\tw, \CC ^*} $ is the part of the $\CC^*$-fixed closed substack of $\fM _{0, k} ^{\tw} (\PP ^1, [\PP ^1])  $ 
for which the markings in $A_1$ are over $0$ and the markings in $A_2$ are over $\infty$.

The vertical arrows are the natural morphisms obtained by forgetting the mapping data to $\X$.

The two right horizontal arrows are the inclusions of the maximal $\CC ^*$-fixed closed substacks 
into their ambient stacks.

The two left horizontal arrows $\Psi _1, \Psi _2$ are obtained from gluing morphisms, constructed as
follows.   By Lemma \ref{simple}, an object in $Q(A_1, A_2, \beta)$ can be considered as a triple  
\[ ((C_1, \bx _{A_1}, x_{\bullet}), [u]_1), ((C_0, x_{\check{\bullet}}, x_{\bullet}), [u]_0), ((C_1, x_{\check{\bullet}}, \bx _{A_2}), [u]_2)\]
of $\ke$-stable quasimaps to $X$, $\ke$-stable graph quasimaps to $X$, $\ke$-stable quasimaps to $X$, respectively,
 with isomorphisms $x_{\bullet}\cong x_{\check{\bullet}}$ inverting the band structures (for each \lq\lq join"). With these isomorphisms  
we glue $C_1, C_0, C_2$ along $x_{\bullet}, x_{\check{\bullet}}$ to get a twisted curve over $S$:
\[ C_1\  {}_{x_{\bullet}}\!\!\amalg_{x _{\check{\bullet}}}  C_0\  {}_{x _{\bullet}}\!\!\amalg _{x _{\check{\bullet}}} C_2 \] 
(see \cite[\S A]{AGV} for gluing of Artin stacks along closed substacks). By the co-cartesian property
the mapping data $[u]_1: C_1\ra \X$ (followed by  the inclusion $\X \cong \X \times [0,1]\subset \X \times \PP ^1$), 
$[u]_0: C_0 \ra X\times \PP ^1$, $[u]_2: C_2\ra \X$ (followed by the inclusion  $\X \cong \X \times [1,0]\subset \X \times \PP ^1$) 
can be glued. This explains $\Psi_1$. The construction of $\Psi _2$ is similar.
By investigating the formal deformation spaces we note that 
$\Psi _2$ is an 1-isomorphism. So is $\Psi _1$, since the left square in the diagram is cartesian.

\begin{Prop}\label{comp_vir} 
Let \[ \Delta _2: \bIX \times \bIX \ra (\bIX \times \bIX ) \times (\bIX \times \bIX) \] be the product of the diagonal map $(\mathrm{id}, \iota): \bIX \ra \bIX \times \bIX$. Then
 \begin{align*} & [F_{A_1, A_2, \beta} ] ^{\vir}   = \sum _{A_1\coprod A_2 = [k], \beta _1 + \beta _2 = \beta}  \\ &
 \Delta _2 ^! ( [Q_{0, A_1\cup \{\bullet\}}^\ke  (X, \beta _1)]^{\vir} \times [\IX ] \times  [Q_{0, A_2\cup \{\check{\bullet}\} }^\ke  (X, \beta _2)]^{\vir} ) . \end{align*}
\end{Prop}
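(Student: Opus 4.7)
The plan is to follow the strategy of \cite[Proposition 5.3.1]{CKg0}, combining the isomorphism $\Psi_1: Q(A_1, A_2, \beta) \xrightarrow{\sim} F_{A_1, A_2, \beta}$ (already obtained via Lemma \ref{simple} and the gluing construction) with a comparison of virtual classes coming from the $\CC^*$-fixed part of the perfect obstruction theory. Transported through $\Psi_1$, the assertion reduces to an equality on $Q(A_1, A_2, \beta)$ between the virtual class induced by the $\CC^*$-fixed part of the relative perfect obstruction theory of $QG^\ke_{0,k,\beta}(X)$ restricted to $F_{A_1, A_2, \beta}$, and the class $\Delta_2^!\bigl([\Qa]^{\vir} \times [\IX] \times [\Qb]^{\vir}\bigr)$, summed over the splittings $\beta = \beta_1 + \beta_2$ with $A_1 \sqcup A_2 = [k] \cup \{0, \infty\}$.

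The key step is a decomposition of the relative obstruction complex $(R^\bullet \pi_* u^* \TT_{\tilde\rho})^\vee$ along the normalization of the universal curve $\cC$ on $F_{A_1, A_2, \beta}$. This normalization splits $\cC$ as $\cC_1 \cup_\bullet \cC_0 \cup_{\check\bullet} \cC_2$, where $\cC_1$ and $\cC_2$ carry the $\ke$-stable quasimap data over $0$ and $\infty$, while $\cC_0$ is a family of twisted bridges of type $C_{r,-r}$ mapping constantly to $X$ in the first factor and by the identity to $\PP^1$ in the second. The normalization exact sequence then yields a distinguished triangle relating the restricted obstruction complex to the contributions of the three pieces, together with two node-correction terms. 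I would then take $\CC^*$-fixed parts: the contribution of $\cC_1$ reconstructs the obstruction theory of $\Qa$, that of $\cC_2$ reconstructs the obstruction theory of $\Qb$, and the contribution of $\cC_0$ is purely the tangent sheaf of the smooth stack $\IX$ (the obstruction vanishes because, by Lemma \ref{simple}(3), the corresponding deformation problem is parametrized by the smooth $\IX$ itself, and the $\PP^1$-direction is rigid by the identity map). The two node-correction terms at $\bullet$ and $\check\bullet$ assemble into the Gysin pullback via $\Delta_2 = (\mathrm{id},\iota) \times (\mathrm{id},\iota)$, each $(\mathrm{id}, \iota)$ factor reflecting the band inversion at the corresponding glued gerbe.

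The main obstacle is the careful bookkeeping at the two nodal gerbes. Concretely, one needs to verify that the natural map $\IX \to \bIX \times \bIX$ sending an object to the $\bIX$-classes of its two gerbe markings on $C_{r,-r}$ factors through the diagonal $(\mathrm{id}, \iota)$, and then that the excess-intersection formalism applied to the normalization triangle produces exactly $\Delta_2^!$ of the external product of virtual classes. Once this compatibility is in place, the identity follows from the standard commutation of virtual pullback with external products.
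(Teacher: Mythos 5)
Your proposal takes essentially the same route as the paper: pass through the gluing isomorphism $\Psi_1$, split the obstruction theory via the normalization sequence at the distinguished nodes (using the tangent bundle lemma \cite[Lemma 3.6.1]{AGV} for the gerbe contributions and the band inversion that produces $(\mathrm{id},\iota)$), take $\CC^*$-fixed parts, and invoke Behrend--Fantechi functoriality to obtain $\Delta_2^!$ of the external product of virtual classes. The only point the paper makes explicit that your sketch leaves implicit is the preliminary comparison of the $\CC^*$-fixed part of $\bL_{\fM_{0,k}^{\tw}(\PP^1,[\PP^1])}$ restricted to the fixed substack with $\bL_{\fM_{A_1,A_2}^{\tw,\CC^*}}$, which is what legitimately converts the $\CC^*$-fixed part of the absolute obstruction theory on $QG^\ke_\beta$ into a compatible pair of absolute/relative obstruction theories on $F_{A_1,A_2,\beta}$ before applying \cite[Proposition 7.5]{BF}.
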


\begin{proof} First note that $ \fM _{0, k} ^{\tw} (\PP ^1, [\PP ^1]) $ is smooth over $\fM _{0, k}^{\tw}$ so that 
there is a canonical quasi-isomorphism 
\[ (\bL_{ \fM _{0, k} ^{\tw} (\PP ^1, [\PP ^1])}   |_{ \fM _{A_1, A_2} ^{\tw, \CC ^* } }  ) ^{\CC ^*\text{-fixed}} \cong \bL_{ \fM _{A_1, A_2} ^{\tw, \CC ^*} }.\]
Therefore, if we denote by $E$ the absolute perfect obstruction theory for $QG^\ke _\beta$, then the $\CC ^*$-fixed part of the distinguished triangle 
\[ E _{|_{ F_{A_1, A_2, \beta}}} \ra (R^\bullet \pi _* [u_1]^* \mathbb{T}_{\X} )^\vee \ra \bL_{ \fM _{0, k} ^{\tw} (\PP ^1, [\PP ^1])}    |_{ F_{A_1, A_2, \beta} }[1] \]
yields the compatible absolute and relative perfect obstruction theories.
By the normalization sequence of nodal curves with distinguished nodes and the tangent bundle lemma \cite[Lemma 3.6.1]{AGV}, we can apply 
 the functoriality from  \cite[Proposition 7.5]{BF} to complete the proof.
\end{proof}

\bigskip

\noindent{\em Proof of Proposition \ref{factorization_Prop}}.
We compute the Euler class of the virtual normal bundle of $F_{A_1, A_2, \beta}$ in $QG_\beta ^\ke$ when $0\in A_1$, $\infty \in A_2$. 
Assume $\beta _i \ne 0$, $i=1, 2$. 
Then the Euler class comes from two contributions:
\begin{enumerate}
\item the moving part of the deformation of $u_2: C \ra \PP ^1$; 
\item  the moving part of
the deformation and the infinitesimal automorphism of $(C, \bx )$.
\end{enumerate}

Part (1) contributes $z$ (respectively, contributes $-z$) for each connected component of $C\setminus C_0$ contracted to $0$ (respectively, contracted to $\infty$) 
under $[u]_2$.

Part (2) has contributions exactly from $T_{\bar{p}_i}C_1\ot T_{\bar{p}_{0,i}} C_0$ for each separating node $\bar{p}_i$ of $C$ (here $\bar{p}_i$ are
the geometric points associated to the separating node).
The contributions are $\frac{z-\psi _{\bullet}}{r}$ for $\bar{p}_1$  and $\frac{-z-\psi _{\check{\bullet}}}{r}$ for $\bar{p}_2$ if $r=|\mathrm{Aut}(\bar{p}_i)|$.

When $\beta _1=0$, $|A_1|=1$ and $\beta _2\ne 0$ or $|A_2| > 1$ there is no contribution $\frac{z-\psi _{\bullet}}{r}$; this is exactly the case when
the separating node $\bar{p}_1$ disappears.
Similarly, when $\beta _2= 0$, $|A_2|=1$ and $\beta _1\ne 0$ or $|A_1|>1$, then
the separating node $\bar{p}_i$, respectively $i=1, 2$ disappears and
there is no contribution  $\frac{-z-\psi _{\check{\bullet}}}{r}$.
Finally, when $\beta _1=\beta _2 =0$ and $|A_1|=|A_2|=1$ 
both separating nodes disappear and there is neither contribution $\frac{z-\psi _{\bullet}}{r}$  nor $\frac{-z-\psi _{\check{\bullet}}}{r}$.

The factorization expression \eqref{fact} is immediate by this analysis, Proposition \ref{comp_vir}, and the contributions of $\eta _0$ and $\eta _\infty$. $\Box$

\begin{Lemma}\label{J_form} Fix $\ke \in \QQ_{>0}$. The $J^\ke$-function of $X=[W^{ss}/G]$ takes the form
\begin{align*}
J^\ke({t}, z) 
\nonumber&=& \one _{X}+ \frac{{t}}{z} + \sum _{0 < \beta(L_\theta)\leq 1/\ke}
q^\beta (\widetilde{\pr _{\bIX} \circ ev_\star})_*
\frac{(-z)[F^{\emptyset, \beta}_{\star, 0}]^{\mathrm{vir}}}{\mathrm{e}^{\CC^*\times T}(N^{\vir}_{F^{\emptyset , \beta}_{\star , 0}/ QG ^\ke_{0, \{ \star \}, \beta }(X) })}\\
\nonumber&+& \sum_{ {(\beta \ne 0, k\geq 1)\;{ \mathrm{ or }}} \atop{(\beta(L_\theta)> 1/\ke, k=0)} }q^\beta 
(\ev_\bullet)_*\frac{\prod_{i=1}^k ev_i^*({ t})\cap[Q_{0, [k]\cup\{ \bullet\}}^\ke (X, \beta )]^{\mathrm{vir}}}{k!z(z-\psi_\bullet)},
\end{align*}
where $\one _{X}$ is the fundamental class of $X$ on the untwisted sector.
Here again the localization residue is taken as a sum over the 
connected components of $F^{\emptyset, \beta}_{\star , 0}$.
\end{Lemma}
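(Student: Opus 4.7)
The plan is to apply $\CC^{*}$-virtual localization to the defining expression of $J^{\ke}$ and to classify the fixed components $F^{k,\beta}_{\star,0}$ geometrically. A $\CC^{*}$-fixed graph quasimap in $F^{k,\beta}_{\star,0}$ has a distinguished rail component $C_{0}$ whose coarse curve maps isomorphically to $\PP^{1}$ via $[u]_{2}$. Because $[u]_{1}$ is $\CC^{*}$-equivariant and $X$ is separated DM, the regular part of $[u]_{1}|_{C_{0}}$ is a constant map; the markings in $[k]$ and the base points of $[u]_{1}$ are $\CC^{*}$-fixed points of $\PP^{1}$, hence lie over $\{0,\infty\}$. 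By the very definition of $F^{k,\beta}_{\star,0}$, only $x_{\star}$ lies over $\infty$, while all markings in $[k]$, all base points, and all bubble trees carrying class sit over $0$.

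I would then partition these fixed components into three types according to what sits over $0$. \textbf{(Unstable cases.)} When $(\beta,k)=(0,0)$ or $(0,1)$, $C=C_{0}=\PP^{1}$ with no bubble; the fixed locus coincides with (a sector of) $X$ or with the image of an $ev_{1}$-type diagonal, and a direct computation using the weights $T_{0}\PP^{1}=z$, $T_{\infty}\PP^{1}=-z$ together with $ev_{\star}^{*}(\eta_{\infty})=-z$ extracts the constant term $\one_{X}$ and the linear term $t/z$ respectively. \textbf{(Rail base-point case.)} When $k=0$ and the rail $C_{0}$ itself carries a base point at $0$ of length $\ell=\beta(L_{\theta})$, the stability inequality $\ke\ell\leq 1$ forces $0<\beta(L_{\theta})\leq 1/\ke$, and the residue is left untouched by any simplification — this produces the middle sum of the statement verbatim. \textbf{(Bubbling case.)} Otherwise, a prestable tree $C'$ is attached to $C_{0}$ at $0$ through a node $\bullet$, carrying all markings in $[k]$ and the full class $\beta$. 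By Lemma \ref{simple} and the gluing construction used just before Proposition \ref{comp_vir}, this component is canonically isomorphic to $Q^{\ke}_{0,[k]\cup\{\bullet\}}(X,\beta)$.

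For the bubbling components I would then compute the virtual normal bundle along the rail $C_{0}$: the deformation of $[u]_{2}$ at the attachment contributes a factor $z$, and the smoothing of the gerbe node $\bullet$ contributes $(z-\psi_{\bullet})/r$, where $r$ is the order of the band of the gerbe at $\bullet$. The rigidification factor $r$ hidden in the pushforward $\widetilde{(ev_{\bullet})}_{*}$ of \eqref{tilde ev} cancels the $1/r$, leaving the denominator $z(z-\psi_{\bullet})$ asserted in the Lemma. Combining with $ev_{\star}^{*}(\eta_{\infty})=-z$ at $\infty$ (which cancels one $z$ in the normal bundle contribution from $T_{\infty}\PP^{1}$ and flips a sign) yields the bubbling summand exactly as stated, and collecting all three types of contributions gives the claimed decomposition.

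The main obstacle I expect is the careful bookkeeping of cyclotomic rigidification and gerbe-band factors: at a twisted node one must track the $1/r$ in the smoothing deformation against the $r$ appearing in $\widetilde{(\cdot)}_{*}$, and also confirm that the twisted-sector indexing on $\bIX$ arising from evaluations at $x_{\star}$ and at $\bullet$ is correctly identified (using Lemma \ref{simple}(3) to identify representable maps from $C_{r,-r}$ with points of $\IX$). Modulo this orbifold bookkeeping, the case analysis is parallel to the scheme-theoretic computation in \cite[\S 5]{CKg0}.
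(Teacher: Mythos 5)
Your proposal follows the same route as the paper's proof: classify the $\CC^{*}$-fixed loci $F^{k,\beta}_{\star,0}$ by whether the rail $C_0$ alone supports the marking/class data (giving the constant, linear, and middle-sum terms) or a bubble with a twisted node at $0$ is forced (giving the last sum), then feed Lemma \ref{simple}, Proposition \ref{comp_vir}, and the virtual normal bundle analysis from the proof of Proposition \ref{factorization_Prop} into the $\CC^{*}$-localization residue. Your bookkeeping of the factors — the $z$ from the bubble contraction, the $(z-\psi_\bullet)/r$ from smoothing the gerbe node, the $-z$ from $T_\infty$ of the rail (which is canceled by $ev_\star^*(\eta_\infty)=-z$), and the $r$ from the rigidified pushforward convention of \eqref{tilde ev} — is exactly the computation the paper alludes to without spelling out, so your proof is correct and essentially identical in approach.
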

\begin{proof}
The first term is from the case $(k=0, \beta =0)$. In this case, $r=1$ by Lemma \ref{simple} (1). Thus,
$(F_{\star, 0}^{0, 0}, QG^{\ke}_{0, \star , 0} (X))= (X\times \infty, X\times \PP ^1)$. This explains the first term.

The second term is from the case $(k=1, \beta=0)$. By Lemma \ref{simple} (2), 
$ev _\star: F_{\star, 0}^{1, 0}\ra \bIX$ is nothing but  $\varpi : \IX \ra \bIX$ under a suitable identification of
$F_{\star, 0}^{1, 0} = \IX$.  Since the degree of $I_{\mu _r}X\ra \bar{I}_{\mu _r}X$  is $1/r$, we obtain the second term.

The third sum is from the case  $k= 0$ and  $0\ne \beta (L_\theta ) \le 1/\ke$. 

The last sum follows from the case when there is a separating node over $[0,1]\in \PP ^1$, i.e., 
$k\ge 1$ or $\beta > 1/\ke$ except $(k, \beta)=(1,0)$.
By Proposition \ref{comp_vir}  with one separating node over $[0,1]$
and the analysis of the virtual normal bundle in the proof \eqref{fact}, the last sum is immediate. 
\end{proof}

Lemma \ref{J_form} shows that $J^\infty$ coincides with the $J$-function $J$  defined in \cite{Tseng} after identifying
$\HA ^*(\IX)$ and $\HA ^*(\bIX)$ by pullback (not by push-forward):
$\varpi ^* (z J^\infty) =  J$,
where $\varpi$ denotes the rigidification $\IX\ra\bIX$.

\subsection{Unitarity of $S^\ke$}
The following Proposition says that the operators $S^\ke_t$ are symplectic transformations on Givental's symplectic space $\cH_X$.

\begin{Prop}\label{Unitarity}
Consider the operator
\[ (S^\ke)^\star _t (-z) (\gamma ) := \sum _i  \gamma _i \lla \gamma ^i , \frac{\gamma }{-z-\psi} \rra ^{\ke}_{0, \{\star , \bullet\}} = \gamma + O(1/z) . \]
Then \[ (S^\ke)^\star (-z) \circ S^\ke (z) (\gamma ) = \gamma . \]
\end{Prop}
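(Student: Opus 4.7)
The plan is to compute the two-pointed graph-space double bracket
$$\lla \gamma\otimes\eta_0,\ \delta\otimes\eta_\infty\rra^{QG^\ke}_{\{\star_1,\star_2\}}$$
in two different ways and compare. On the one hand, the factorization identity \eqref{fact} of Proposition \ref{factorization_Prop} rewrites this as a bilinear expression $\sum_i \lla\gamma,\gamma^i/(z-\psi)\rra^\ke_{0,\{\star_1,\bullet\}}\cdot\lla \gamma_i/(-z-\psi),\delta\rra^\ke_{0,\{\check\bullet,\star_2\}}$. After a book-keeping step that uses the symmetry of the double bracket in its two arguments and the change of basis $\gamma_i=\sum_j g_{ij}\gamma^j$ provided by the orbifold Poincar\'e metric $g_{ij}=\lan\gamma_i,\gamma_j\ran_{\orb}$, the identity reads
$$\lla \gamma_a\otimes\eta_0,\ \gamma_d\otimes\eta_\infty\rra^{QG^\ke} \;=\; \sum_{i,k} g_{ki}\,S^\ke(z)_i^{\,a}\,S^\ke(-z)_k^{\,d},$$
where $S^\ke(z)_i^{\,j}:=\lla\gamma^i/(z-\psi),\gamma_j\rra^\ke_{0,\{\bullet,\star\}}$ is the matrix of $S^\ke_t(z)$ in the basis $\{\gamma_i\}$; in matrix form, the right hand side is $(S^\ke(z)^{\top}\,G\,S^\ke(-z))_{a,d}$ with $G=(g_{ij})$.

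On the other hand, since the graph space $QG^\ke_{0,2+m,\beta}(X)$ is proper over the affine quotient $\underline X_0$, and since $\eta_0,\eta_\infty$ depend polynomially (linearly) on the equivariant parameter $z$ while all other insertions are $z$-independent, the $\CC^*$-equivariant virtual integration produces, for each coefficient of $q^\beta t^m$, a polynomial in $z$ with coefficients in the localized ring $K$. On the right hand side, the definition of $S^\ke$ gives $S^\ke(\pm z)=\mathrm{id}+O(z^{-1})$, so the factorization expression has $q^0t^0$-coefficient equal to $g_{ad}+O(z^{-1})$ and higher $q^\beta t^m$-coefficient of the form $O(z^{-1})$. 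Matching the two sides term-by-term in $q,t$: a polynomial in $z$ that equals a formal series in $z^{-1}K[[z^{-1}]]$ must be the constant term of the latter. Thus every higher $q,t$-coefficient of the graph bracket vanishes, the $q^0t^0$-coefficient equals $g_{ad}$, and we obtain the key matrix identity
$$S^\ke(z)^{\top}\,G\,S^\ke(-z)\;=\;G.$$

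To finish, I would unwind the definitions to identify the matrix of $(S^\ke)^\star(-z)$ in the basis $\{\gamma_i\}$ with $G^{-1}\,S^\ke(-z)^{\top}\,G$; this is another short computation using the symmetry of the double bracket and the duality $\gamma_i=\sum_j g_{ij}\gamma^j$. Substituting the boxed identity above then immediately gives $(S^\ke)^\star(-z)\circ S^\ke(z)=\mathrm{id}$, as required. The main obstacle is the careful Poincar\'e-duality bookkeeping: the factorization naturally pairs $S^\ke(z)$ with $S^\ke(-z)$ (the same operator at opposite arguments) rather than with its adjoint, and one must recognize that $(S^\ke)^\star(-z)$ differs from $S^\ke(-z)$ by conjugation by the orbifold metric $G$. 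A secondary technical point is the convention $S^\ke(z)=\mathrm{id}+O(z^{-1})$: its $\mathrm{id}$ term encodes the degenerate two-point, class-zero contribution and is exactly what makes the polynomiality comparison single out $g_{ad}$ as the leading term.
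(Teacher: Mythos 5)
Your proof is correct and follows essentially the same strategy that the paper (terse as it is) is pointing at: it cites the factorization identity \eqref{fact} together with the normalization $S^\ke,\ (S^\ke)^\star = \mathrm{Id}+O(1/z)$ and refers the reader to the argument of \cite[Proposition 5.3.1]{CKg0}, which is precisely the polynomiality-in-$z$ versus power-series-in-$1/z$ comparison you spell out. Your Poincar\'e-metric bookkeeping (deriving $S^\ke(z)^{\top}G\,S^\ke(-z)=G$ and then recognizing $(S^\ke)^\star(-z)=G^{-1}S^\ke(-z)^{\top}G$ so that the matrix identity is exactly unitarity) is the correct way to unwind the dual-basis conventions, and the geometric input you invoke — that the $\CC^*\times T$-equivariant graph-space integrals lie in $K[z]$ — is exactly what the paper records in the paragraph following the definition of the $QG^\ke$-brackets. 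One small wording slip: you say a polynomial in $z$ equal to a series in $z^{-1}K[[z^{-1}]]$ ``must be the constant term of the latter,'' but $z^{-1}K[[z^{-1}]]$ has no constant term, so in that case both sides vanish; what you mean (and use correctly in the next sentence) is that a polynomial in $z$ equal to a series in $K[[z^{-1}]]$ must reduce to the common constant term.
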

\begin{proof}
The factorization \eqref{fact},  together with the fact that $S^\ke$ and $(S^\ke)^\star$ are of form $\mathrm{Id} + O (1/z)$,
yields a proof as in \cite[Proposition 5.3.1]{CKg0}.
\end{proof}

\subsection{The $P$-series and Birkhoff factorization of $J^\ke$}


Define the $P^\ke$-series by
\[ P^\ke (t, q, z) := \sum \gamma _i \lla \gamma ^i \eta_\infty \rra _{\{\star\}}^{QG^\ke} = \one_X + O(q),\]
where the latter equality follows from Lemma \ref{simple}.

\begin{Def}\label{semi-pos}
We call the triple $(W, G, \theta)$ (or the \lq\lq target" $(\X, X)$)  {\em semi-positive} if $\beta (\det T_{\X})$ is non-negative
for every $\beta\in\mathrm{Eff}(W,\G,\theta)$. (Note that $\beta (\det T_{[W/G]})=\beta (\det T_W)$, since the determinant of adjoint bundle $P\times_\G \mathrm{Lie}(\G)$ 
on the domain curve $C$ has degree zero.)
\end{Def}

\begin{Thm}\label{JS}
\begin{enumerate}

\item The following formula holds: \[ J^{\ke} (t, q, z)   = S^\ke _t (z) (P^\ke (t, q, z) ) . \]

\item\label{grading}  The small $J$-function $J^\ke |_{t=0}$ has degree $0$ if we set $\deg z = 1$, 
$\deg q^\beta = \beta (\det T_\X)$, and the degrees of cohomology classes of
$\bIX$ to be the age-shifted complex degrees. 

\item 
For a semi-positive target $(W, G, \theta )$, the following hold.

\begin{enumerate}

\item $J^{\ke} (t, q, z) $ takes
the form $J^\ke _0(q)\one _X  +  \frac{1}{z} (t+ J^\ke _1(q)) + O (1/z^2)$,
 for some $J^\ke _0(q) = 1 + O(q) \in \Lambda _{K}$ of degree $0$ and  $J^\ke_1(q)   \in (q\Lambda _{K})H^{\leq 2}(\bIX,\Lambda _{K})$ of degree $1$
 with respect to the grading from \eqref{grading}  above.

\item  $P^{\ke}(t, q, z) = J^{\ke}_0(q) \one _X$.  In particular
\begin{align*}  \frac{J^\ke (t, q, z)}{J^\ke_0 (q) } & =  S^{\ke} _t ( \one _X ), \\
   \frac{t + J_1^\ke (q)}{J_0^\ke (q)} & = \sum \gamma _i \lla \gamma ^i, \one _X \rra _{0,2}^\ke -\one _X . \end{align*}

\item  
$J_0^\ke (q) \one _X $ is the unity in the $\ke$-quasimap quantum product.

\end{enumerate}

\item  If $\beta (\det T_{\X}) > 0$ for every nonzero $L_\theta$-effective class, then $J^\ke =\one _X + O (1/z)$.

\end{enumerate}

  \end{Thm}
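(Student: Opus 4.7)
The plan is to reduce (1) to the factorization identity of Proposition \ref{factorization_Prop} via $\CC^*$-localization on the graph space, and then to deduce (2)--(4) by combining (1) with Riemann--Roch and the (semi-)positivity hypothesis.

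For Part (1), I would compute the bracket defining $P^\ke$ by virtual $\CC^*$-localization on $QG^\ke$. The insertion $\eta_\infty$ forces the distinguished marking $\star$ to lie over $\infty$, so only fixed loci $F^{A_1,\beta_1}_{A_2,\beta_2}$ with $\star \in A_2$ contribute. By Proposition \ref{comp_vir}, each such fixed locus factors across its pair of nodes as a product of $\ke$-quasimap moduli spaces joined along $\IX$ by the diagonal $\Delta_2$, while the Euler class of the virtual normal bundle supplies edge-smoothing factors $1/(z-\psi_\bullet)$ and $1/(-z-\psi_{\check\bullet})$. Collecting the sum over $A_1,\beta_1$ on the over-$0$ side assembles the kernel of the adjoint operator $(S^\ke)^\star_t(-z)$, while the sum over $A_2\setminus\{\star\},\beta_2$ on the over-$\infty$ side reproduces exactly the residues defining $J^\ke$. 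This yields $P^\ke = (S^\ke)^\star_t(-z)(J^\ke)$, and inverting via the unitarity $(S^\ke)^\star_t(-z)\circ S^\ke_t(z)=\mathrm{id}$ of Proposition \ref{Unitarity} gives (1).

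For Part (2), I would apply the virtual dimension formula (2.4.1) to each moduli stack in Lemma \ref{J_form}: the age shifts at the markings compensate the $-\sum\mathrm{age}(X_{c_i})$ correction, the Novikov weight contributes $\beta(\det T_\X)$, and each $\psi$-class or factor of $1/z$ contributes degree $1$. A direct tally verifies that every term of $J^\ke|_{t=0}$ is homogeneous of total degree $0$.

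For Part (3), the degree-$0$ constraint from (2) combined with semi-positivity ($\deg q^\beta\geq 0$) confines each $q^\beta$-coefficient of $J^\ke|_{t=0}$ to low-degree cohomology: the $z^0$-coefficient must be a scalar on the untwisted sector, and the $z^{-1}$-coefficient must land in $H^{\leq 2}(\bIX)$, giving (3)(a). The same analysis applied to $P^\ke$ (same virtual-dimension budget, with $\eta_\infty$ supplying one unit of degree) shows that $P^\ke$ has no $1/z$-corrections and is a scalar multiple of $\one_X$; matching $z^0$-coefficients against $J^\ke = S^\ke_t(P^\ke) = P^\ke+O(1/z)$ identifies this scalar with $J^\ke_0$, proving (3)(b), and extracting the $z^{-1}$-coefficient and normalizing by $J^\ke_0$ yields the second formula after correcting for the unstable $(\beta,m)=(0,0)$ term. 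For (3)(c), unit-ness of $J^\ke_0\one_X$ for the $\ke$-quantum product follows from a TRR-type identity built into $S^\ke_t(\one_X)$, together with the first formula of (3)(b). Finally, Part (4) is immediate: strict positivity rules out any nonzero $q^\beta$ in the scalar $J^\ke_0$ and in $J^\ke_1\in H^{\leq 2}$, since either would carry strictly positive total degree; hence $J^\ke_0=1$, $J^\ke_1=0$, and $J^\ke = \one_X+t/z+O(1/z^2) = \one_X+O(1/z)$.

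The main obstacle will be the bookkeeping in Part (1): correctly handling the degenerate boundary cases in which one of $A_1,A_2,\beta_1,\beta_2$ vanishes and the corresponding node disappears, and verifying that the $0$-side and $\infty$-side sums assemble into $(S^\ke)^\star_t(-z)$ and $J^\ke$ respectively (rather than merely into series of the right shape). The analysis closely parallels the corresponding arguments of \cite{CKg0}; the orbifold adjustments are absorbed by the $\Delta_2$-factor in Proposition \ref{comp_vir} and by the rigidified Chen--Ruan pairing conventions fixed above.
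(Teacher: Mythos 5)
Your overall strategy matches the paper's: localize $P^\ke$ on the graph space, factor the fixed loci via Proposition \ref{comp_vir} to obtain $P^\ke=(S^\ke)^\star_t(-z)(J^\ke)$, then apply unitarity (Proposition \ref{Unitarity}) for (1); use the virtual dimension formula for (2); and combine (1), (2), Lemma \ref{J_form}, and semi-positivity for (3) and (4).

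However, in Part (1) you have the two sides of the $\CC^*$-localization reversed. The over-$0$ data $(A_1,\beta_1)$, joined to the parametrized $\PP^1$ through the node $\bullet$ with the smoothing factor $\frac{1}{z-\psi_\bullet}$, is what assembles into the residues defining $J^\ke$: indeed, the fixed locus $F^{k,\beta}_{\star,0}$ used in the definition of $J^\ke$ places the markings $[k]$ and the class $\beta$ over $0$, with $\star$ alone over $\infty$. Conversely, the over-$\infty$ data $(A_2\setminus\{\star\},\beta_2)$ contributes the moduli $Q^\ke_{0,A_2\cup\{\check\bullet\}}(X,\beta_2)$ with the factor $\frac{1}{-z-\psi_{\check\bullet}}$ at $\check\bullet$ and the $\gamma^i$ at $\star$; this is precisely the kernel of $(S^\ke)^\star_t(-z)$. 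So the assignments \lq\lq over-$0$ gives $(S^\ke)^\star$'' and \lq\lq over-$\infty$ gives $J^\ke$'' should be swapped. The final formula you write is nonetheless correct, and once swapped, the derivation is the paper's.

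Two smaller remarks on Part (3). First, statement (3a) concerns $J^\ke(t,q,z)$ for \emph{general} $t$, while your dimension argument is phrased only for $J^\ke|_{t=0}$; you need to invoke Lemma \ref{J_form} to observe that all $t$-dependent contributions beyond $t/z$ occur at order $O(1/z^2)$, so the $z^0$ and $z^{-1}$ coefficients are $t$-independent apart from the explicit $t/z$. Second, for (3b) the paper's route is a bit more direct than a fresh dimension count on $P^\ke$: since $P^\ke$ is polynomial in $z$ and $S^\ke_t(z)=\mathrm{id}+O(1/z)$, the already-established fact that $J^\ke$ has no positive powers of $z$ forces $P^\ke$ to be $z$-independent and equal to the $z^0$-part of $J^\ke$, which is $J^\ke_0\one_X$ by (3a). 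Your version would work but needs to address the $t$-independence of $P^\ke$, which is awkward to see from a dimension count alone and falls out immediately from the comparison with $J^\ke$.
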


\begin{proof} The proof of (1) is completely analogous  to the case when $X$ is a nonsingular variety (see 
\cite[Theorem 5.4.1]{CKg0}). Briefly, virtual localization of $P^\ke$ with respect to the $\CC^*$ action
on graph spaces yields the factorization 
\begin{equation}\label{P from J} P^\ke = (S^\ke)^*(-z) (J^\ke) .\end{equation}
Now apply $S^\ke(z)$ to both sides of the above equality and use Proposition \ref{Unitarity}.

For (2), note that the virtual dimension of a connected component of $QG_{0, 1, \beta} ^\ke$ landing on $X_c$ under
$\pr _{\bIX} \circ ev_{\star}$ is, by the formula \eqref{dim},
$$ 1+ (\dim \X + 1 - 3) + \beta (\det T_{\X}) + \deg [u]_2^* T_\PP ^1 - \mathrm{age} X_c ,$$  so that
under $(\pr _{\bIX}\circ \ev _\star )_*$ after cap with $\eta_\infty$, its usual cohomological degree becomes 
$-\beta (\det T_{\X})  - \dim \X + \mathrm{age} X_c +  \dim X_c$. The age shifted degree of the latter
becomes $-\beta (\det T_{\X})  - \dim \X + \mathrm{age} X_c +  \dim X_c  + \mathrm{age} \iota (X_c)$  which is $-\beta (\det T_{\X})$.

For (3a), note that by (2) and Lemma \ref{J_form}, $J^\ke$ is a series of $1/z$. Considering the equality of (1) modulo $1/z$, we conclude that
$P^\ke$ has only the zero-th power of $z$ and coincides with  the zero-th power of $z$ piece  in $J^\ke$.
The latter is  $J_0^\ke (q) \one_X$ by (2) for some $J_0^{\ke}(q) \in  \Lambda _{K}$.

Claim (3b) follows from (1) and (3a) (see \cite[Corollary 5.5.3]{CKg0}).
Claim (3c) follows  from the identical proof of \cite[Corollary 5.5.4]{CKg0}.

Claim (4) is obvious from (2) and Lemma \ref{J_form}.
\end{proof}

\begin{Rmk}
When $\ke = \infty$, combining the relation \eqref{P from J} from the proof of Theorem \ref{JS} (1) with Lemma \ref{J_form}, we obtain  \[ J^\infty = S^\infty (\one _X ). \]
\end{Rmk}

\subsection{$\ke$-wall-crossing}

As generalizations of Theorems 7.3.1, 7.3.4 and Conjecture 6.2.1 of  \cite{CKg0}, it is natural to make the following conjecture.

\begin{Conj}\label{Main_Conj} 
\begin{enumerate}
\item\label{Part1} The following formula holds: \[ S_t^{\ke} (\one _X) = S_{\tau (t)} ^\infty (\one _X) 
\text{ for }
\tau (t):= t + \sum _{\beta \ne 0} q^\beta \gamma _i \lla \gamma ^i , \one_X \rra _{0, \{ \bullet , \star \} , \beta}^{\ke} . \]

\item\label{Part2}  The following formula holds:  \[ J^{\ke}(t, q, z)= S_t ^{\ke}(z) (P^\ke (t, q, z)) = S_{\tau ^{\infty, \ke}(t)} ^\infty (z) (P^{\infty, \ke} ( \tau ^{\infty, \ke}(t) , q,  z ) ) \]
for a unique transformation 
$$t\mapsto \tau ^{\infty, \ke}(t) \in \HA_T^*(\bIX)\otimes_{\QQ} K [[\Eff (W, G, \theta ) ]][[\{t_i\}_i]]$$ and a unique element 
$$P^{\infty, \ke} (t, q,  z) \in \HA_T^*(\bIX)\otimes_{\QQ} K[z] [[\Eff (W, G, \theta ) ]][[\{t_i\}_i]] .$$
In particular, $zJ^{\ke}$ is on the Lagrangian cone of the Gromov-Witten theory of $X$ defined in
\cite{Tseng} (with Novikov variables from $\Lambda$) and if the triple  $(W, G, \theta)$ is  semi-positive, then
 \[ J^{\ke}(t, q,z) / J^{\ke}_0(q) = J^{\infty}( (t + J^{\ke}_1(q))/J^{\ke}_0(q), q,z) \]
 with $\tau ^{\infty, \ke}(t ) = (t + J^{\ke}_1(q))/J^{\ke}_0(q)$ and $P ^{\infty, \ke}(t, q, z) = J_0^{\ke}(q) \one _X$.
\end{enumerate}
\end{Conj}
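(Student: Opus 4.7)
The plan is to adapt the torus-localization strategy of \cite{CKg0, BigI} to the orbifold setting, exploiting the hypothesis that the $T$-fixed locus in $X$ is discrete (and, for the full strength of part (\ref{Part2}) without semi-positivity, that the one-dimensional $T$-orbits are also isolated).

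For part (\ref{Part1}), I would start from the graph-space factorization of Proposition \ref{factorization_Prop}. Setting $\gamma = \one_X$ and letting $\delta$ vary recovers $S^\ke_t(\one_X)$ on the left factor, while Proposition \ref{Unitarity} controls the right factor. The double bracket $\lla \one_X \otimes \eta_0, \delta \otimes \eta_\infty \rra^{QG^\ke}$ on the left-hand side will be evaluated by $\CC^*$-equivariant virtual localization on $QG^\ke$, combined with $T$-localization at the discrete $T$-fixed points of $X$. The $\CC^*$-fixed loci split into three types as in \cite[\S 4.2]{CKg0} (entire class over $0$, entire class over $\infty$, or split between the two), and re-packaging their combinatorics should produce exactly $S^\infty_{\tau(t)}(\one_X)$. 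The precise form $\tau(t) - t = \sum_{\beta \ne 0} q^\beta \gamma_i \lla \gamma^i, \one_X \rra_{0,\{\bullet,\star\},\beta}^\ke$ is forced by matching primary insertions against the string equation $J^\infty = S^\infty(\one_X)$, using the remark following Theorem \ref{JS}.

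For part (\ref{Part2}), the first equality is Theorem \ref{JS}(1). For the second, I would differentiate (\ref{Part1}) in the formal variables $t_j$, extending the wall-crossing from the value on $\one_X$ to the full operators $S^\ke_t$. Composing with $P^\ke$ and invoking Theorem \ref{JS}(1) then places $zJ^\ke$ in the image of $S^\infty_{\tau^{\infty,\ke}(t)}$ applied to a corrected Birkhoff factor $P^{\infty,\ke}(t,q,z) \in \HA^*_T(\bIX) \otimes K[z][[\Eff(W,\G,\theta)]][[\{t_j\}]]$. Uniqueness of $(\tau^{\infty,\ke}, P^{\infty,\ke})$ will follow from the ruled structure of the Givental cone $\cL_X$, and the semi-positive case reduces to Theorem \ref{JS}(3), which identifies $P^\ke = J_0^\ke(q)\one_X$ and yields the explicit mirror map displayed in the statement.

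The hard part will be the polynomiality in $z$ of $P^{\infty,\ke}$, equivalently the statement that $zJ^\ke$ lies on $\cL_X$ and not merely in its $K((z^{-1}))$-linear span. Individual $T\times\CC^*$-localization residues on $QG^\ke$ coming from fixed loci with a base point of maximal length at $0 \in \PP^1$ are only rational in $z$, and their assembly into a polynomial $P^{\infty,\ke}$ is a global cancellation reflecting the properness of $QG^\ke$ over $\uX_0$. In the semi-positive case this cancellation follows from Theorem \ref{JS}(3)(a); in general the isolated-one-dimensional-orbit hypothesis will be needed to reduce the graph sum to controllable edge contributions over parametrized $\PP^1$'s between pairs of $T$-fixed points. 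Throughout, the orbifold refinements require tracking age shifts, rigidification factors $\br_j$ entering \eqref{tilde ev}, and the cyclic-gerbe node structure from Lemma \ref{simple}; these modifications are conceptually straightforward but technically delicate, paralleling the corresponding extensions in \cite{AGV}.
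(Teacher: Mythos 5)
Your proposal touches on some of the right ingredients, but the logic of Part~\eqref{Part1} as you sketch it has a genuine gap, and the route is materially different from the paper's.

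The central problem is your claim that $\CC^*$-equivariant localization of the double bracket on $QG^\ke$ followed by ``re-packaging'' will produce $S^\infty_{\tau(t)}(\one_X)$. It will not. Every localization residue on $QG^\ke$ (and every term in the factorization of Proposition~\ref{factorization_Prop}) is by construction an $\ke$-quasimap invariant; the factorization proposition is an identity entirely within the $\ke$-theory, and its role in the paper is to prove unitarity of $S^\ke$, not to compare stability parameters. Nothing in your plan supplies the mechanism by which $\infty$-quasimap (i.e.\ GW) invariants appear on the right-hand side. The paper's mechanism is the content of \S\ref{Pf_Comp_Thm}: one passes to the restrictions $S^\ke_\nu = i_\nu^* S^\ke_t(\one_X)$ at the isolated $T$-fixed points $\nu$ of $\bIX$, decomposes the $T$-fixed locus of $Q^\ke_{0,2+m}(X,\beta)$ into components of \emph{initial} type and \emph{recursion} type, and observes (the key ``unbroken quasimaps'' step) that every recursion component is canonically a component of $\cK_{0,2}(X,\beta')^T$ --- it parametrizes honest twisted stable maps because base points of quasimaps are forbidden from lying on the non-contracted part of the domain. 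This identification forces the recursion relation of Lemma~\ref{recursion lemma} to have $\ke$-\emph{independent} recursion coefficients, with the $\ke$-dependence confined to the initial term $R^\ke_\nu$ and the mod-$q$ specialization. Combined with a polynomiality statement (this is where the graph-space factorization genuinely enters) and a uniqueness lemma, one concludes $S^\ke_\nu = i_\nu^* S^\infty_{\tau(t)}(\one_X)$ for all $\nu$. Without the recursion/uniqueness architecture, ``re-packaging'' is not a proof.

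Secondary issues: the formula for $\tau(t)$ is not ``forced by matching primary insertions against the string equation''; the string equation $J^\infty = S^\infty(\one_X)$ enters only as a consistency check. For Part~\eqref{Part2}, the paper does not differentiate Part~\eqref{Part1} in the $t_j$'s; instead it applies the same recursion/polynomiality/uniqueness scheme to a second system of functions (cf.\ the treatment in \cite[\S 7.8--7.9]{CKg0}), and uniqueness of $(\tau^{\infty,\ke}, P^{\infty,\ke})$ is furnished by the uniqueness lemma rather than by abstract properties of the cone $\cL_X$. You correctly identify where the isolated-one-dimensional-orbit hypothesis enters and that polynomiality is the delicate global input, and your orbifold bookkeeping remarks (ages, $\br_j$-factors, inverted band structures at gerbe nodes, the need to replace $\uX^T$ by $\underline{\bIX}^T$, and the mod-$q$ independence of $\ke$ from Remark~\ref{mod q}(3)) match the modifications listed in \S\ref{Pf_Comp_Thm}. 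But the argument as written does not close, because the decisive step that bridges different values of $\ke$ is missing.
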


The first main result of the paper is a proof of Conjecture \ref{Main_Conj} in the presence of a torus action with good properties.

\begin{Thm}\label{Comp_Thm}
Suppose that  there is an action by an algebraic torus $T$ on $W$ which commutes with the $G$ action and such that the induced
$T$ action on the coarse moduli space $\uX$ of $X$ has only isolated fixed points. Then Conjecture \ref{Main_Conj} \eqref{Part1} holds true.

Further, Conjecture \ref{Main_Conj} \eqref{Part2} holds true if we assume in addition that 
 the 1-dimensional $T$-orbits are isolated when $(W, G, \theta)$ is not semi-positive.
\end{Thm}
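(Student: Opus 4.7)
The plan is to extend the proofs of \cite[Theorems 7.3.1, 7.3.4]{CKg0}, which treat the case of a smooth variety target, to the orbifold situation. The essential ingredients are the factorization $J^{\ke} = S^{\ke}_t(z)(P^{\ke}(t,q,z))$ from Theorem \ref{JS}(1), virtual $T$-equivariant localization on $\Qke$ and $\QGke$ (available by the construction in \S 2), Proposition \ref{Unitarity}, and Givental's characterization of the orbifold Lagrangian cone $\cL_X$ in terms of $T$-localization residues at the fixed points of $X$.

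For Part \eqref{Part1} of Conjecture \ref{Main_Conj}, I would apply $T$-equivariant virtual localization to the moduli stacks $Q^{\ke}_{0, [k]\cup\{\bullet, \star\}}(X, \beta)$ appearing in the definition of $S^{\ke}$. The isolated $T$-fixed point hypothesis ensures that the $T$-fixed loci are described by decorated graphs whose vertices are supported over points of $X^T$ and whose edges correspond to $T$-invariant non-constant quasimaps. Both $S^{\ke}_t$ and $S^{\infty}_t$ are of the form $\mathrm{Id}+O(1/z)$ and, by Proposition \ref{Unitarity}, are symplectic transformations of $\cH_X$. I would then argue that $S^{\ke}_t(\one_X)$ satisfies the same $T$-localization recursion at each fixed point as elements of $\cL_X$, so that it must coincide with an evaluation $S^{\infty}_{\tau(t)}(\one_X)$ of the Gromov-Witten $S$-operator; the mirror map $\tau(t)$ is then forced by reading off the $1/z$-coefficient, which yields exactly the formula in Conjecture \ref{Main_Conj}\eqref{Part1}.

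For Part \eqref{Part2}, in the semi-positive case Theorem \ref{JS}(3) already reduces the assertion to Part \eqref{Part1} together with the uniqueness of Birkhoff-type factorization on $\cH_X$. In the non-semi-positive case one cannot control the polynomial-in-$z$ part $P^{\ke}$ abstractly, and I would instead verify directly that $zJ^{\ke}$ lies on $\cL_X$ by checking Givental's $T$-localization criterion: for each $p\in X^T$, the residue of $zJ^{\ke}$ at $p$ is a polynomial in $z$ whose specialization $z\mapsto -\psi$ along a given tangent direction is matched by the residue at the adjacent fixed point connected via the corresponding $T$-invariant rational curve in $X$. The stronger hypothesis of isolated $1$-dimensional $T$-orbits is exactly what makes the recursion take this simple edge-form. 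I would obtain the recursion by $\CC^{*}\times T$-localization on $\QGke$, identifying the essential fixed-locus contributions with gluings, via Proposition \ref{comp_vir}, in which the distinguished parametrized component maps birationally onto a $T$-invariant $\PP^1\subset X$.

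The hard part will be Part \eqref{Part2} in the non-semi-positive case: the orbifold-specific bookkeeping is delicate because the $T$-invariant rational curves in $X$ may pass through points with nontrivial isotropy, the gluing across separating nodes must be tracked through the inertia decomposition $\bIX = \coprod X_c$ with the twist by $\iota$ in the pairing, and the length condition on base points interacts with the age grading when matching virtual dimensions of fixed components across different $\ke$. I expect these issues to be controlled by combining the factorization and virtual-class decomposition of \S 3 with an induction on the degree $\beta(L_\theta)$, following the template of \cite{CKg0} but with the orbifold evaluation maps $\widetilde{\pr_{\bIX}\circ ev}$ of \eqref{tilde ev} replacing their scheme-theoretic counterparts throughout.
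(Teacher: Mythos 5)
Your proposal follows essentially the same route the paper takes, which is itself a direct adaptation of \cite[\S 7.5--7.9]{CKg0}: $T$-localize the $S$-operator to obtain a recursion among the fixed-point restrictions $S^\ke_\nu := i_\nu^* S^\ke_t$, then invoke polynomiality and a uniqueness lemma to identify $S^\ke_t(\one_X)$ with $S^\infty_{\tau(t)}(\one_X)$, and for Part (2) treat the semi-positive case via Theorem \ref{JS}(3) plus Part (1) and handle the general case by verifying that $J^\ke$ itself satisfies the localization recursion characterizing $\cL_X$. Two points worth sharpening: the fixed-point set indexing the recursion is $\underline{\bIX}^T$ (pairs $(x,g)$ with $x\in X^T(\CC)$, $g\in\Aut(x)$), not $X^T$ as you wrote in the Part (1) discussion — you implicitly correct this in your final paragraph but it should be foregrounded since it also affects the recursion kernel and the weights at nodes; and you should make explicit that the uniqueness is driven by the Polynomiality Lemma from \cite{CKg0} (whose orbifold form involves the pairing $S^\ke_\nu(q,t,z)\,\iota^*S^\ke_{\nu^{-1}}(qe^{-zyL_\theta},t,-z)$ and draws on the graph-space factorization of Proposition \ref{factorization_Prop}), since the recursion by itself is not enough to force the conclusion.
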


\subsection{Proof of Theorem \ref{Comp_Thm}}\label{Pf_Comp_Thm}

For $X$ a scheme, the result is contained in \cite[Theorems 7.3.1, 7.3.4]{CKg0} and the proof given there also works for orbifolds.
We outline the argument, focusing on the appropriate changes.

\subsubsection{Unbroken quasimaps} (cf. \cite[\S 7.4.3]{CKg0})
For each $\CC$-element in $\Qket ^T$, we say the element 
is of {\em initial type} (resp. of {\em recursion type})  if the cotangent $T$-weight of the first marking at 
the coarse domain curve is zero (resp. nonzero). 
A recursion element is called 
{\em unbroken}  if
$$\ka _{C', x} + \ka _{C'', x} = 0$$
for every node $x$ of the domain curve $C$ connecting $C'$ and $C''$,
 where 
$\ka _{C', x}$, $\ka _{C'', x}$ are the induced $T$-weights of the cotangent spaces
at $x$ to  the coarse curves $\uC' $ and $\uC''$. A recursion element is called {\em broken} if
there is a node $x$ of the domain curve $C$ connecting $C'$ and $C''$ with $\ka _{C', x} + \ka _{C'', x} \ne 0$.
Note that the domain curve of an unbroken recursion element has no components 
contracted under $[u]_{\reg}$. 

Let $M$ be a connected component of $Q ^{\ke} _{0, 2}(X, \beta )^T$. We call
$M$ a {\em recursion component}  of  $Q ^{\ke} _{0, 2}(X, \beta )^T$ if it contains 
an unbroken element. This implies that every element in $M$ is unbroken and is therefore a two-pointed {\em stable map to $X$}. Hence
$M$ is canonically identified with a connected component of $\cK _{0,2}(X, \beta )^T$.

\subsubsection{Recursion}\label{recursion_subsection} (cf. \cite[\S 7.5]{CKg0})
We divide the connected components of the $T$-fixed substack $$\Qket ^T$$ into initial types and recursion types, according to 
whether the first marking is on a contracted component or not under $[u]_{\reg}$. 
Every recursion component is of the form $M\cong M'\times _{ \bIX ^T} M''$,
with $M'$ an unbroken component of $\cK _{0, 2}(X, \beta ')$ for some $\beta '$ 
and $M''$ a connected component of $Q^{\ke}_{0,2+m} (X, \beta - \beta ')$,
such that $\ka _{M}=\ka _{M'} \ne  \ka _{M''}$. Here $\ka _M$, $\ka _{M'}$, $\ka _{M''}$ denote the respective cotangent 
weights at the first coarse markings of any element in $M$, $M'$, $M''$.

Let $$\{\nu, \nu ', ...\} = \underline{\bIX ^T}$$ be  the finite set of  $T$-fixed $\CC$-points of $\bIX$ and let
$$i_\nu: \nu = \Spec \CC \ra \bIX$$ be the associated map.  For $\gamma\in H^*_{T}(\bIX)\ot\Lambda_K$, denote 
 \begin{equation}\label{Snu}  S_\nu^\ke (\gamma) = i_\nu^*(S_{t}^\ke(z)(\gamma)) \in  K [[1/z]][[\mathrm{Eff}(W,G,\theta)]] [[\{t _j\}_j ]]
 . \end{equation}

Calculating $S_\nu^\ke(\gamma)$ by virtual localization and using
the above analysis of $\Qket ^T$ to separate the contributions from components of initial type and the contributions from components of recursion type, 
we conclude the following Lemmas, whose proofs are word for word the same as the proofs of Lemma 7.5.1 and of the Recursion Lemma 7.5.2 in \cite{CKg0}. 

\begin{Lemma} Each $(q,\{t_i\})$-coefficient of $S_\nu^\ke$ is naturally an element of $K(z)$. Further, this rational function decomposes as a sum of partial simple fractions with denominators
either powers of $z$, or powers of $(z-\alpha)$ with $-n\alpha$ a weight of the $T$-representation on the tangent space $T_\nu\bIX$ for some $n\in\QQ_{>0}$.

\end{Lemma}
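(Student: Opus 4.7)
The plan is to apply $T$-virtual localization directly to the defining integral \eqref{Snu} on $Q^\ke_{0,[k]\cup\{\bullet,\star\}}(X,\beta)$, exploiting the fact that $z$ enters the formula only through the kernel $\gamma^i/(z-\psi_\bullet)$. For fixed $\beta$ and fixed $(t_j)$-multidegree, finiteness of $T$-fixed components of the moduli space will reduce each $(q,\{t_j\})$-coefficient to a finite sum
$$ S_\nu^\ke(\gamma) \big|_{q^\beta \prod t_j^{n_j}} \ = \ \sum_{M}\, i_\nu^*(ev_\bullet)_*\!\left(\frac{ev_\star^*\gamma}{z-\psi_\bullet\big|_M}\cdot \frac{A_M}{e^T(N^{\vir}_M)}\right), $$
where $M$ runs over the finitely many contributing components of $\Qket^T$ and $A_M$ collects the remaining insertion/evaluation factors. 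Since the $A_M$ and the Euler classes $e^T(N^{\vir}_M)$ are independent of $z$ and are elements of $K$-cohomology on $M$, rationality of each coefficient follows once we show that $1/(z-\psi_\bullet|_M)$ contributes only the two advertised pole types.

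For this, I would analyze $\psi_\bullet|_M$ via the initial/recursion dichotomy from \S\ref{recursion_subsection}. If $M$ is of \emph{initial type}, then $\bullet$ lies on a component of $C$ that is contracted under $[u]_{\reg}$, so the cotangent line $\mathbb{L}_\bullet$ at the first marking is $T$-fixed with \emph{trivial} character, and $\psi_\bullet|_M$ is a nilpotent cohomology class on $M$. Expanding
$$ \frac{1}{z-\psi_\bullet|_M} \ =\ \sum_{j\geq 0} \frac{(\psi_\bullet|_M)^j}{z^{j+1}} $$
then yields a finite sum with denominators only powers of $z$, as required. If $M$ is of \emph{recursion type}, the component $C_\bullet$ of $C$ carrying the first marking is not contracted: it is an irreducible $T$-invariant rational twisted curve whose coarse image in $\uX$ is a one-dimensional $T$-invariant arc from $\nu$ to another torus-fixed point. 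In this case $\psi_\bullet|_M = \alpha_M + \psi$ where $\psi$ is nilpotent and $\alpha_M\in K$ is the $T$-weight on the cotangent line $T^*_{\bullet} \uC_\bullet$, producing a pole of the predicted form $(z-\alpha_M)^{-m}$ after expansion.

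The crucial step is to identify the weights $\alpha_M$ occurring in the recursion case with the claimed shape. Since $[u]_{\reg}$ restricted to $C_\bullet$ is a nonconstant $T$-equivariant representable morphism to $X$, its differential at $\bullet$ intertwines the $T$-action on $T_\bullet \uC_\bullet$ with the $T$-action on the tangent direction of $\uX$ at the coarse image of $\bullet$. Up to the degree $n$ of the cover of the coarse $T$-arc (a positive rational number in the orbifold setting, due to root-stack structure and gerbe banding at $\bullet$), the induced map on tangent spaces multiplies weights by $-n$, so $-n\alpha_M$ equals a weight of the $T$-action on $T_\nu\bIX$. Note that here one must pass between the cotangent line on the coarse curve (where $\psi$ is defined) and the tangent on $X$ through the representable map; the factor $n\in\QQ_{>0}$ absorbs both the covering degree and the order of the cyclic band at $\bullet$.

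The main obstacle, compared with the scheme case of \cite[Lemma 7.5.1]{CKg0}, is precisely this last identification: keeping careful track of the rational multiplicity $n$ produced by twisted markings and orbifold covers, and verifying that the resulting $\alpha_M$ lies in the image of $\frac{1}{\QQ_{>0}}\cdot \{\text{weights of } T_\nu\bIX\}$. Once this is in place, the lemma follows by assembling the finite contributions, since rational functions in $z$ over $K$ form a $K$-algebra and each summand is manifestly rational with poles only of the two specified kinds.
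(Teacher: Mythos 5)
Your argument follows exactly the same route the paper takes: the paper's "proof" of this lemma is simply the citation to Lemma~7.5.1 of \cite{CKg0}, and what you have written is precisely that localization argument unfolded and transplanted to the orbifold setting — finiteness of $T$-fixed components, the split into initial versus recursion components, the nilpotence of the nonequivariant part of $\psi_\bullet|_M$ so that $1/(z-\psi_\bullet|_M)$ expands to a finite sum of partial fractions, and the covering-degree relation that sends the cotangent weight $\alpha_M$ at the first marking to a tangent weight $-n\alpha_M$ of $T_\nu\bIX$ with $n\in\QQ_{>0}$ absorbing the orbifold multiplicities. You also correctly flag the one genuinely orbifold-specific point (that $n$ need only be a positive rational rather than a positive integer). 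So this is the same proof; no material gap.
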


\begin{Lemma} \label{recursion lemma}
 $S_\nu^\ke$ satisfies the recursion relation
       \begin{equation}\label{recursion formula} 
       S_\nu ^\ke (z) = R_\nu^\ke (z) + \sum _{ M'\in U(\nu)} q^{\beta_{M'}} 
\left\lan \frac{\delta _\nu}{z - \ka _{M'}- \psi _0 }, S_{\nu '_{M'}, \ka _{M'}}^\ke |_{z=\ka _{M'}-\psi _{\infty}}
\right\ran _{M'}
       \end{equation} 
where: 
\begin{itemize}

\item $\delta _\nu  : = (i _\nu) _* (\one _{\nu}) $. 
\item $U(\nu)$ is the set of all unbroken components $M'\subset \cK _{0,2}(X, \beta _{M'})^T$ for varying $\beta _{M'}$ 
for which the first marking lands on $\nu$ via the evaluation map. 
\item $\nu '_{M'} \in (\underline{\bIX} )^T$ denotes $\underline{f (x_2)}$ for $((C, x_1, x_2), f) \in M'$.
\item $\psi _0$, $\psi _\infty$ are the nonequivariant Psi classes on $M'$ associated to the markings $x_1$, $x_2$, respectively.
\item $R_\nu ^\ke (z)$  is the contribution from all components of initial type and has the property that each of its $(q,\{t_i\})$-coefficients is an element in $K[1/z]$.
\item For $\nu \in (\underline{\bIX})^{\T}$, $S_{\nu, \ka}^\ke (z)$ denotes the part of $S_\nu^\ke (z)$ remaining after the partial fraction terms with poles at $z=\ka$ are removed.
\item The subscript $M'$ for the bracket means the $\T$-virtual localization contribution of the component $M'$ to the virtual intersection 
number on $\cK_{0,2}(X, \beta _{M'} )$.
\end{itemize}

\end{Lemma}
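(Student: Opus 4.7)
The plan is to apply the $T$-equivariant virtual localization theorem of Graber-Pandharipande to the integrals over $Q^\ke_{0,2+m}(X,\beta)$ that define the coefficients of $S_\nu^\ke(z)$, and then organize the resulting sum of residues according to the initial-versus-recursion dichotomy introduced in \S\ref{recursion_subsection}. By the definition of $S_t^\ke$ and the construction of $S_\nu^\ke$ via pullback $i_\nu^*$, each coefficient of $S_\nu^\ke(z)$ is a sum of contributions indexed by connected components of $Q^\ke_{0,2+m}(X,\beta)^T$, with the first marking constrained by $\delta_\nu$, a propagator $1/(z-\psi_\bullet)$, the remaining markings weighted by $t$, and an overall division by the equivariant Euler class of the virtual normal bundle.

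For an initial-type fixed component the first marking lies on a component of $C$ that is contracted by $[u]_{\reg}$. On such components $\psi_\bullet$ is purely non-equivariant and nilpotent, so $1/(z-\psi_\bullet)$ expands into a polynomial in $1/z$ times the equivariant parameters. Gathering all such contributions defines $R_\nu^\ke(z)$, each $(q,\{t_i\})$-coefficient of which lies in $K[1/z]$ as required.

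For a recursion-type fixed component $M$, the first marking lies on a non-contracted rational component. Extending along unbroken nodes produces a maximal unbroken sub-chain $C'$ through the first marking, carrying no base-points and meeting the rest of the domain at a single gerbe node. This realizes $M$ as a fibered product $M\cong M'\times_{\bIX^T}M''$ with $M'$ an unbroken component of $\cK_{0,2}(X,\beta')^T$ and $M''$ a fixed component of $Q^\ke_{0,2+m}(X,\beta-\beta')$, glued along the outgoing gerbe marking (this is the orbifold analogue of the splitting used in \cite[\S 7.5]{CKg0}, and is legitimized by Lemma \ref{simple} together with the normalization and splitting argument of Proposition \ref{comp_vir}). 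The corresponding virtual normal bundle of $M$ inside $Q^\ke_{0,2+m}(X,\beta)$ splits into $N^{\vir}_{M'/\cK_{0,2}(X,\beta')}$, $N^{\vir}_{M''/Q^\ke}$, and a single smoothing line for the separating gerbe node whose equivariant Chern class is $(\ka_{M'}-\ka_{M''}-\psi_\infty-\psi_0^{M''})/r$ with $r$ the order of the gerbe band.

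The final step is to re-sum the recursion-type contributions by fixing the head $M'$ and letting the tail $M''$ vary over all possible choices with second marking of $M'$ landing at $\nu'_{M'}=\underline{f(x_2)}$. On the $M'$ side the insertion at the first marking, combined with the restriction of $\psi_\bullet$ to $\psi_0+\ka_{M'}$, produces the propagator $\delta_\nu/(z-\ka_{M'}-\psi_0)$. On the $M''$ side, the standard splitting identity $\sum_i \gamma_i\otimes\gamma^i = r\,[\Delta_{\bar{I}_{\mu_r}X}]$ together with $\tilde{ev}_*$ and the Chen-Ruan convention in $\langle\cdot,\cdot\rangle_\orb$ exactly cancels the gerbe order $r$, and the remaining sum over $M''$ is the definition of $S_{\nu'_{M'}}^\ke$ evaluated at the value of $z$ determined by the smoothing Euler factor, namely $z=\ka_{M'}-\psi_\infty$. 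The unbroken/broken condition $\ka_{M''}\ne\ka_{M'}$ forces the contributing summands to have no pole at $z=\ka_{M'}$, so only $S_{\nu'_{M'},\ka_{M'}}^\ke$ (the part with such poles removed) is relevant. Reassembling gives \eqref{recursion formula}. The main obstacle, compared with the scheme-theoretic case of \cite[Lemma 7.5.2]{CKg0}, is the careful bookkeeping of gerbe orders at the splitting node and the use of the non-rigidified Poincar\'e pairing to ensure the substitution identity $z=\ka_{M'}-\psi_\infty$ comes out with the correct normalization; everything else is a direct transcription of the scheme-theoretic argument.
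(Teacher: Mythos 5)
Your proposal is correct and follows essentially the same route as the paper: the paper's proof consists precisely of applying $T$-virtual localization to the integrals defining $S_\nu^\ke$, separating fixed components into initial and recursion types as in \S 3.7.1--3.7.2, splitting each recursion component as $M'\times_{\bIX^T}M''$ with the node-smoothing factor and gerbe order handled as in Proposition \ref{comp_vir} and the proof of Proposition \ref{factorization_Prop}, and then invoking that the resummation is word for word the argument of the Recursion Lemma 7.5.2 of \cite{CKg0}. Your sketch just makes explicit the same bookkeeping (restriction $\psi_\bullet|_M=\ka_{M'}+\psi_0$, smoothing class $\bigl((\ka_{M'}-\psi_\infty)-(\ka_{M''}+\psi_0^{M''})\bigr)/r$, cancellation of $r$ against the diagonal splitting, and the truncation to $S^\ke_{\nu'_{M'},\ka_{M'}}$ forced by the broken-node condition) that the paper delegates to \cite{CKg0}.
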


\subsubsection{Uniqueness} 

Lemmas parallel to  Polynomiality Lemma 7.6.1, Uniqueness Lemma 7.7.1  of  \cite{CKg0} hold.
We mention the needed minor modifications. 
\begin{enumerate} 

\item Instead of $X^T$, we need to use the set of $\CC$-points of $\bIX ^T$, i.e., 
\[ \{ (x, g) : x\in X^T (\CC), g \in \Aut (x) \} .\]

\item In the polynomiality formula in Lemma 7.6.1 of \cite{CKg0}, the product between two $S^\ke_\nu$ now becomes
 \[ S^\ke_\nu (q, t, z) \iota ^* (S^\ke _{\nu ^{-1}} (qe^{-zyL_\theta}, t, -z))  , \] 
 where $\nu ^{-1}$ denotes the point obtained from $\nu$ after inverting the band structure, i.e.,
 $\nu ^{-1}:= (x, g^{-1})$ for $\nu = (x, g)$.
  Note that $\iota ^*S^{\ke}_{\nu ^{-1}} (q, t, z) = i^*_{\nu} S ^{\ke}(q, t, z) = S^{\ke} _{\nu } (q, t ,z )$.

\item $S^{\ke}_\nu (q, t , z)$ modulo $q$ does not depend on $\ke$. (This is observed in Remark \ref{mod q}(3) and replaces 
 condition (5) in Uniqueness Lemma 7.7.1 of \cite{CKg0}.)
 
 \end{enumerate}
 
With these changes in mind, the argument of \cite[\S 7.8, 7.9]{CKg0} applies to the orbifold setting and provides
the proof of Theorem \ref{Comp_Thm}.

\begin{Rmk} {\it (Twisted theories)}\label{twisted}
 Let $\cE$ be a vector bundle on $\X$ or, equivalently, a $\G$-equivariant vector bundle on $W$ (for example, 
 one could take $\cE=W\times E$,
with $E$ a $\G$-representation). Given an invertible $\mathbb{G}_m$-equivariant multiplicative characteristic class $c$, we may define $(\cE,c)$-twisted orbifold $\ke$-quasimap invariants exactly as in 
\cite[\S 6.2]{CKM} (here $\mathbb{G}_m=\CC^*$ acts by scaling in the fibers of vector bundles). Let $c=Euler$ be the $\mathbb{G}_m$-equivariant Euler class. We then have the twisted versions of the $J^\ke$-functions and of the $S^\ke$-operators, see \cite[\S7.2]{CKg0}. 


 Let $s\in \Gamma(W,\cE)^{\G}$ be a regular section and let $Z\subset W$ be its $\G$-invariant zero locus. Assume that $Z^{ss}=W^{ss}\cap Z$ is nonsingular.
The bundle $\cE$ descends to a vector bundle $\ucE$ on $X$ and $s$ descends to a regular section $\underline{s}$ of $\ucE$. The pair $(Y:=[Z^{ss}/\G], \fY:=[Z/\G])$ (or the triple $(Z,\G,\theta)$)
satisfies the conditions of Theorem \ref{Found_Thm} and therefore has its own quasimap theory.
Note that $Y$ is the zero locus of $\underline{s}$.

If $\cE$ is convex (by definition, this means that $H^1(C,[u]^*\cE)=0$
for all genus zero $\theta$-quasimaps $[u]:C\lra\X$), then in genus zero and after setting to zero the equivariant parameter for the $\mathbb{G}_m$-action, the $(\cE, Euler)$-twisted quasimap theory of $X$ coincides with the quasimap theory of $Y$ with insertions restricted to Chen-Ruan cohomology classes pulled-back
from $\bIX$. This is a consequence of the fact that \cite[Proposition 6.2.2]{CKM} holds equally in the orbifold case (the same argument, based on \cite{KKP}, works, see also \cite[Proposition 5.1]{Coates et al}).

As in \cite{CKg0}, the proof of Theorem \ref{Comp_Thm} applies also for the $(\cE, Euler)$-twisted theories,
essentially because all splitting properties of the virtual classes $[\Qket]^{\mathrm{vir}}$ required for the Recursion and Polynomiality Lemmas continue to hold after twisting. 
Hence in this situation Theorem \ref{Comp_Thm} gives the wall-crossing formulas for $Y$ under the same assumptions on the $T$-action on $W$.

Note that, as explained in \cite{Coates et al}, convexity is a rather restrictive condition on orbifold targets. 
For example, in the case of a split $\cE=\oplus L_{\eta_i}$ (i.e., $Y$ is a complete intersection in $X$) the positivity condition of \cite[Proposition 6.2.3(i)]{CKM} does not suffice to guarantee convexity and
has to be supplemented with the requirement that the line bundles induced by $L_{\eta_i}$ on $X$ are pulled-back from the coarse moduli $\uX$. 

\end{Rmk}

\section{$\mathds{I}$-functions and stacky loop spaces}

In this section we introduce (after \cite{BigI}) a generalization of Givental's small $I$-function and prove that it lies on the Lagrangian cone of the Gromov-Witten theory of $X$. 
We then show that by using certain \lq\lq stacky loop spaces" of
$\theta$-quasimaps to $\X$, these new $\mathds{I}$-functions can be explicitly computed.

\subsection{ $\mathds{I}$-functions}

Denote by $I (t, q, z)$ the $J$-function for $(0+)$-quasimap theory $J^{0+}(t, q, z)$ and let 
\[ I(0, q, z) := \sum q^{\beta} I_{\beta}(0, q, z) \] 
be its specialization at $t=0$.

Let $t=\sum _i t_i \gamma _i \in H^*_T(X)$ (in the untwisted sector), with the sum taken only over those  $\gamma _i$ 
which can be written as a polynomial 
\begin{equation}\label{poly_exp}p_i( c_1 (L_{\eta_{ij} }))=p_i( c_1 (L_{\eta_{i1} }),\dots , c_1 (L_{\eta_{im_i} }))\end{equation}
in divisor classes of the form $c_1(L_{\eta_{ij}})$ for some  $\eta_{ij} \in \chi(G)$. 

\begin{Def}\label{new}
\[ \mathds{I} (t, q, z)  = \sum  _{\beta} q^{\beta} \exp (\frac{1}{z}\sum _i t_i p_i( c_1 (L_{\eta_{ij} })+\beta(L_{\eta_{ij} })z )  I_\beta (0, q, z). \] 
\end{Def}

The specialization of $\mathds{I}(t, q, z)$ to $$t=t_0\one _X + \sum t_i c_1(L_{\eta _i})  \in H^{\le 2}_T(X)$$ 
for $\eta _i \in \chi (G)$
is called {\em Givental's small $I$-function}  and denoted by
 $I^{\Giv} (t, q, z)$.

\medskip

The following is the second main result of the paper.

\begin{Thm}\label{big_I}
Assume that the $T$-action on $\uX$ has isolated $T$-fixed points and isolated
$1$-dimensional $T$-orbits. Then  $\mathds{I}(t, q, z)$ is on the Lagrangian cone of the Gromov-Witten theory of $X$.
\end{Thm}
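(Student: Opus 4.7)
The plan is to extend the strategy of \cite{BigI} to the orbifold setting, realizing $\mathds{I}(t,q,z)$ as a $T$-equivariant localization residue on a graph space for the quasimap theory with weighted markings from \S\ref{weighted}, and then invoking Theorem \ref{Comp_Thm}\eqref{Part2} to conclude that this residue lies on the Lagrangian cone $\cL_X$.

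First I would set up the $(0+,0+)$-weighted graph space with one distinguished marking $\star$ and $m$ light markings carrying insertions of $t$, and define an associated series $J^{0+,0+}(t,q,z)$ analogously to the definition of $J^\ke$ but also summed over $m\geq 0$ with weight $1/m!$, inserting $ev_{p_j}^*(t)$ at each light marking $p_j$. By the reformulation in \S\ref{weighted}, this weighted theory is nothing but the standard $(0+)$-quasimap theory for the enlarged triple $(W\times\CC^m,\,\G\times(\CC^*)^m,\,\uptheta)$ with $\uptheta = (0+)\theta_0 + (0+)\sum_j \mathrm{id}_{\CC^*}$; the GIT stack quotient of the enlarged triple is still $X$, and the $T$-action on $W$ extends trivially to $W\times\CC^m$, preserving isolated fixed points and isolated $1$-dimensional orbits on $\uX$. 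Hence Theorem \ref{Comp_Thm}\eqref{Part2} applied to each enlarged target places $J^{0+,0+}(t,q,z)$ on $\cL_X$.

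The next step is to compute $J^{0+,0+}(t,q,z)$ by $T$-equivariant virtual localization and identify it with $\mathds{I}(t,q,z)$. With $(0+)$-stability at $\star$ and weight $0+$ at the light markings, the relevant $\CC^*$-fixed locus $F^{(m,\beta)}_{\star,0}$ is obtained from a stacky loop space of \S 4 parametrizing class-$\beta$ $(0+)$-quasimaps with base-point at $0\in\PP^1$, together with the $m$ light markings all piled up at that same point. Taking only the $\star$-contribution reproduces, by definition, the small $I$-function $I_\beta(0,q,z)$. Each light marking $p_j$ evaluates $c_1(L_\eta)$ at the base-point, where its $\CC^*$-equivariant value is $c_1(L_\eta)+\beta(L_\eta)z$: the shift is the equivariant degree of $L_\eta$ along the contracted $\PP^1$ of class $\beta$, with $z$ the weight of the $\CC^*$-rotation at $0$. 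Substituting $t=\sum_i t_i p_i(c_1(L_{\eta_{ij}}))$ and summing over $m\geq 0$ with weight $1/m!$ exponentiates the per-marking contribution $\tfrac{1}{z}\sum_i t_i p_i(c_1(L_{\eta_{ij}})+\beta(L_{\eta_{ij}})z)$, producing exactly the prefactor of Definition \ref{new}. Hence $J^{0+,0+}=\mathds{I}$.

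The main obstacle is the orbifold localization computation. Two points require care: (i) the equivariant identity $c_1(L_\eta)\mapsto c_1(L_\eta)+\beta(L_\eta)z$ must hold even though $\beta(L_\eta)\in\tfrac{1}{\e}\ZZ$ may be fractional; this is handled because $L_\eta$ comes from a character of $\G$ and therefore descends to a line bundle on the coarse moduli, with degree computed through $\phi_*([u]^*L_\eta^{\otimes\e})$ as in the discussion after Definition \ref{Def_qmap}. (ii) The virtual class on $F^{(m,\beta)}_{\star,0}$ must factor as a product of the stacky loop virtual class and trivial (equivariant) contributions from the light markings, in the spirit of Proposition \ref{comp_vir}; this factorization uses the orbifold normalization/gluing formalism of \cite[\S A]{AGV}. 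With these two checks in hand, the argument of \cite{BigI} transfers verbatim to the orbifold setting.
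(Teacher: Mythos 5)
Your proposal takes the route sketched in the Remark following Theorem~\ref{big_I} in the paper, not the paper's primary proof. The paper's primary argument is purely computational: it applies Lemma~6.4.1 of \cite{CKg0} to extract the unique $(\tau^{\infty,s},P^{\infty,s})$ matching $\mathds{I}$ modulo $1/z^2$, and then verifies conditions (1)--(3) of Uniqueness Lemma~7.7.1 of \cite{CKg0} for $\{i_\nu^*\mathds{I}\}_\nu$ directly from the explicit exponential correction factor of Definition~\ref{new}, using the observation that restricting $c_1(L_{\eta_{ij}})+\beta(L_{\eta_{ij}})z$ at a recursion node and evaluating at $z=\kappa_{M'}$ coherently redistributes the class $\beta$. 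Your argument instead realizes $\mathds{I}$ as a genuine quasimap $J$-function of the $(0+,0+)$-weighted theory via \S\ref{weighted}, so that the recursion/polynomiality machinery applies for free. The two routes buy different things: the paper's main proof is self-contained and elementary but relies on a somewhat unmotivated bookkeeping identity; the weighted-markings route is more conceptual and explains \emph{why} the exponential correction is the right one, at the cost of extending the quasimap formalism.

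One step of your argument is however imprecise. You say Theorem~\ref{Comp_Thm}\eqref{Part2} ``applied to each enlarged target places $J^{0+,0+}(t,q,z)$ on $\cL_X$.'' This is not a direct invocation for two reasons. First, for fixed $m$, the $J$-function of the enlarged triple $(W\times\CC^m,\G\times(\CC^*)^m,\uptheta)$ in the sense of \S 3.2 has its own general Chen--Ruan insertions at all markings, while $\mathds{J}^{0+,0+}$ carries specific lifted insertions $\tilde\gamma_i$ at the light markings; these are built into the moduli problem via the extra $\CC^*$-factors and are not the same as primary insertions. Second, $\mathds{J}^{0+,0+}$ is a sum over $m$ with weight $1/m!$, and the Lagrangian cone is not closed under such sums, so even with the per-$m$ statements in hand you cannot simply superpose. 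The correct version of your step is: establish the Birkhoff factorization $\mathds{J}^{0+,0+}(\mathbf{t},q,z)=\mathds{S}^{0+,0+}_{\mathbf{t}}(P^{0+,0+}(\mathbf{t},q,z))$ for the weighted theory (which requires re-running the factorization arguments of \S 3.3--3.5 with light markings present, as in \cite{BigI}), and then apply the recursion/polynomiality arguments of \S\ref{Pf_Comp_Thm} to the right-hand side of this factorization to verify conditions (1)--(3) of the Uniqueness Lemma; this, combined with Lemma~6.4.1 of \cite{CKg0} and the uniqueness statement, gives the result. That is exactly what the paper's Remark asserts. Your localization identification $J^{0+,0+}=\mathds{I}$ in the last two paragraphs is correct in outline (including the careful handling of the fractional degree shift and the virtual-class factorization), and is what \cite[\S 5]{BigI} provides; once the invocation above is corrected, the proof is sound.
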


\begin{proof} The argument is similar to the proof (under the same assumptions on the $T$-action) of the second part of Theorem \ref{Comp_Thm}.

By Lemma 6.4.1 of \cite{CKg0}, there is a unique transformation
$$t\mapsto \tau ^{\infty, \ke}(t) \in \HA_T^*(\bIX)\otimes_{\QQ} K [[\Eff (W, G, \theta ) ]][[\{t_i\}_i]]$$ and a unique element 
$$P^{\infty, \ke} (t, q,  z) \in \HA_T^*(\bIX)\otimes_{\QQ} K[z] [[\Eff (W, G, \theta ) ]][[\{t_i\}_i]]$$
such that $$\mathds{I}  (t, q, z) = S ^{\infty}_{\tau ^{\infty, s} (t)} (P ^{\infty, s}(t, q, z)) \text{ modulo } 1/z^2 . $$
 Now if we let  \begin{align*} S_{1, \nu} & = i_{\nu}^* \mathds{I}(t, q, z) , \\
  S_{2, \nu} & =  i_{\nu}^* S ^{\infty}_{\tau ^{\infty, s}  (t)} (P^{\infty, s} (t, q, z)) , \end{align*}
then it is straightforward to check that the systems $\{ S_{i, \nu} \ | \ \nu \in \underline{\bIX }^T \}$, $i=1,2$  satisfy all the properties in
 Uniqueness Lemma 7.7.1 of  \cite{CKg0} (replacing $\WmodG ^T$ by $\underline{\bIX}^T$ and condition (5) of Uniqueness Lemma 7.7.1 \cite{CKg0}
 by $S_{1, \nu} = S_{2, \nu}$ modulo $q$) so that
 $S_{1, \nu} = S_{2, \nu}$ for all $\nu$. 
 More precisely, we need to check the conditions (1) -- (3) of the Uniqueness Lemma.
 For $\{S_{2,\nu}\}_\nu$ this is already done in \S \ref{Pf_Comp_Thm}. Similarly, \S \ref{Pf_Comp_Thm} shows
 that $\{ i_\nu^* I(0, q, z) \} _{\nu} $ satisfies conditions (1) -- (3) (since it is the specialization of 
 $\{ i_\nu ^* S_t ^{0+} (P ^{0+}(t, q, z)) \}_{\nu}$ at $t=0$). From this fact and the explicit form of the 
 exponential correcting factor, by using the simple observation
 \[ (c_1(L_{\eta _{ij}})  + \beta (L_{\eta_{ij}} ) z )|_{\nu, z=\ka _{M'}} = (c_1(L_{\eta _{ij}})  + (\beta - \beta _{M'}) (L_{\eta_{ij}} ) z )|_{\nu '_{M'}, z=\ka _{M'}},
 \] 
 it follows by a direct check that
 conditions (1) -- (3) continue to hold for $\{ i_\nu ^*\mathds{I} (t, q, z)  \}_{\nu}$.
\end{proof}

\begin{Rmk} Alternatively, and better, one may use the geometric arguments of 
\S \ref{Pf_Comp_Thm} to show that $\{ i_\nu^* \mathds{I}(t, q, z) \} _{\nu} $ satisfies conditions (1) -- (3) in the
Uniqueness Lemma 7.7.1 of  \cite{CKg0}. We explain this briefly. Consider the quasimap theory with weighted 
markings from \cite{BigI}, extended to orbifold targets using \S \ref{weighted}.
Fix the stability $(0+, 0+)$, that is, the asymptotic stability with respect to $\theta$ and infinitesimally small weights
on the weighted markings. For this theory, we have the $J$-function
$\mathds{J}^{0+, 0+}({\bf t}, q, z)$, the $S$-operator $\mathds{S}_{\bf t} ^{0+, 0+}$, and the $P$-series 
$P^{0+, 0+} ({\bf t}, q, z)$, satisfying the Birkhoff factorization 
\begin{align}\label{NewBir} \mathds{J}^{0+, 0+} ({\bf t}, q, z) = \mathds{S}_{\bf t}^{0+, 0+} (P ^{0+, 0+} ({\bf t}, q, z)) .\end{align}
Here ${\bf t}\in H^*_{T}([W/G])$ is a general element.
By \S 5 of \cite{BigI}, the new $I$-function $\mathds{I}$ of Definition \ref{new} is identical with 
$\mathds{J}^{0+, 0+}({\bf t}, q, z)$ after the specialization ${\bf t }= \sum t_i \tilde{\gamma _i}$ where
$\tilde{\gamma _i}$ is the natural lift of $\gamma _i = p_i( c_1 (L_{\eta_{ij} }))$ by taking
the Chern class of the corresponding line bundles on $[W/G]$.  
Through this identification, the fact that  $\{ i_\nu^* \mathds{I}(t, q, z) \} _{\nu} $ satisfies conditions (1) -- (3)
follows from the geometric argument of \S \ref{Pf_Comp_Thm}  applied to the right hand side of \eqref{NewBir}.
\end{Rmk}

\begin{Rmk} Besides the overlap with the Mirror Theorem for toric DM stacks proved in \cite{CCIT} 
(which will be explained in the next section, see Corollary 5.3.4 (3) and the discussion after it), Theorem 4.1.2 also overlaps with the work of C. Woodward, \cite{Wo1, Wo3}.

Partly in collaboration with E. Gonzales, Woodward has investigated the so-called gauged maps from curves to certain $\G$-varieties. We comment briefly on the similarities and differences
with quasimap theory.
First, the theory of gauged maps requires a parametrized component in the domain
curve, so it is essentially a genus zero theory, unlike ours. The gauged maps used by Woodward are in particular representable maps 
to $[W/\G]\times\PP^1$ of class $(\beta,1)$,  as are the graph quasimaps from \S 2.5.3, or the variant with weighted markings from \S 2.5.5 of this paper. 
However, the stability conditions he considers are quite different from the ones we employ, so he obtains different compactified graph spaces. 

There are
also differences in the kind of targets allowed by the two theories: the results in \cite{Wo1, Wo3} based on the Gonzales-Woodward theory allow $W$ to be either a
smooth projective variety or a vector space, while we require $W$ affine,
but allow lci singularities.

When comparing the two theories, the moduli spaces that resemble each other most closely are our genus zero graph spaces with
$\ke=0+$ and infinitesimally weighted markings, and their compactified moduli spaces of ``large area gauged maps", but even these are not exactly the same when markings are present. Nevertheless, it appears that the ``localized gauged graph potential" $\tau^\G_{X,-}(\alpha,\hbar,q)$ from Definition 9.13 of \cite{Wo3} (in the limit $\rho\ra\infty$)
should be equal to $\mathbb{J}^{0+,0+}({\bf t}=\alpha,z=\hbar,q)$. Therefore Theorem 1.6 of \cite{Wo1}, 
whose proof is now contained in \cite{Wo3},
and our Theorem 4.1.2 should give the same result for the set of targets they both cover.

Note, however, that there is an incompatibility in the sample calculation for toric manifolds in \cite[Example 9.1.5]{Wo3}: the equality
$\tau^\G_{X,-}(\alpha,\hbar,q)=\exp(\alpha/\hbar)\tau^{\G,0}_{X,-}(\hbar,q)$ is not compatible with Theorem 1.6 of \cite{Wo1,Wo3} since the right-hand side 
does {\it not} satisfy Givental's recursion and therefore cannot lie on the Lagrangian cone of the Gromov-Witten theory of $X$, while Woodward's Theorem 1.6 states that the left-hand side {\it is} on the Lagrangian cone.

\end{Rmk}


 \subsection{Stacky loop spaces}\label{Stacky_Loop}

In view of (the semi-positive case of) Theorem \ref{Comp_Thm} and Theorem \ref{big_I}, it is important to compute explicitly the function $I(0,q,z)=J^{0+}|_{t=0}$. For this purpose,
we construct another quasimap graph space.
When $X$ is a weighted projective space, this construction already appeared in \cite{CCLT}.

Denote by $\PP _{a,1}$ the quotient stack $[\CC ^2\setminus \{0\}/\CC ^*]$ where $\CC^*$ acts on $\CC ^2$ by
weights $a$ and $1$. 
Note that $0:=[0,1]$ is a schematic point, while $\infty:=[1,0]\cong B\mu _a$ is a stacky point for $a > 1$.

For a positive integer $a$ and $\beta \in \Hom _{\ZZ}(\Pic (\X), \QQ )$, 
define $$\Qmaprep \subset \Hom^{\mathrm{rep}}_{\beta} (\PP _{a, 1}, \X)$$ 
to be the moduli stack of all $\theta$-quasimaps $[u]$ from $\PP _{a, 1}$ to $\X=[W/G]$. This means that 
$[u]\in \Qmaprep$ is a representable morphism to $\X$, mapping the generic point of $\PP_{a,1}$ into $X$. However, the stacky point $\infty=[1,0]$ is allowed to be mapped to the unstable locus $\X \setminus X$. 

\begin{Prop}
The stack $\Qmaprep$ is a DM stack proper over $\uX _0$, equipped with
a canonical perfect obstruction theory $R^\bullet \pi _* [u]^* \mathbb{T}_{\X}^\vee$.
\end{Prop}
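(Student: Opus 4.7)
The plan is to mimic the proof of Theorem \ref{Found_Thm}, with simplifications coming from the fact that the domain $\PP_{a,1}$ is fixed (so no variation of twisted curves), and with one extra feature that the base locus is allowed to contain the stacky point $\infty$. I will assume throughout that $W$ is LCI and $W^{s}=W^{ss}$ is smooth, as this is already in force by the end of \S \ref{Found_Proof}.

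First I would establish algebraicity and the DM property. Since $\PP_{a,1}$ is a fixed proper $1$-dimensional DM stack and $\X=[W/G]$ is a quotient stack, the Hom-stack $\Hom(\PP_{a,1},\X)$ is Artin and locally of finite presentation over $\CC$ by exactly the argument used in \S \ref{Found_Proof} (invoking \cite[Prop.\ 2.11]{Lieb} after passing to a smooth atlas of the base). Representability is an open condition, as is the quasimap condition that $[u]^{-1}(\X\setminus X)$ be zero-dimensional, and fixing the class $\beta$ selects an open-closed substack. The argument for the DM property is as in \S 2.4.2: any 2-automorphism of a representable quasimap $[u]$ acts faithfully on the associated principal $G$-bundle $\cP\to\PP_{a,1}$, hence on any geometric fiber over a point of $\PP_{a,1}$ whose image in $\X$ lies in $X=[W^{s}/G]$; since such a fiber has finite stabilizer in $G$, automorphisms form a finite group.

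Next I would prove finite type by noting that a quasimap $[u]:\PP_{a,1}\to\X$ of class $\beta$ has underlying $G$-bundle of determined degree, and boundedness reduces to the boundedness of sections of a fixed fibration $\cP\times_G W\to\PP_{a,1}$ of bounded degree, analogous to step (i) of the boundedness portion of \S \ref{Found_Proof}. Then comes the main step, properness over $\uX_0$, which I will check using the valuative criterion on a DVR $(\Delta,0)$ equipped with a map $\Delta\to\uX_0$. Given a family $[u]_\eta:\PP_{a,1}\times\eta\to\X$ over the generic point, I observe that the composite $\PP_{a,1}\times\Delta\to\X\to\uX_0$ is automatically determined: since $\underline{\PP}_{a,1}=\PP^1$ is proper and $\uX_0$ is affine, the composite factors through the given $\Delta\to\uX_0$. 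It therefore suffices to extend the principal $G$-bundle $\cP_\eta$ and its section $s_\eta$ of $\cP_\eta\times_G W$ across the central fiber. This is done by exactly the Langton-type argument of \cite[Thm.\ 4.1.1]{CKM} (see also \cite[\S 4.1]{CK}) as invoked in \S \ref{Found_Proof}: the section extends off a zero-dimensional subscheme of $\PP_{a,1}\times\{0\}$, which becomes the base locus of the limiting quasimap. The key simplification relative to Theorem \ref{Found_Thm} is that, since the domain is rigid, there are no unstable rational tails to contract and no length bound to verify; the hardest technical point will be the usual one of controlling the generic behavior of the extended bundle on the central fiber so that the limit is still a quasimap of class $\beta$, but this is standard once boundedness is in hand. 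Separatedness follows from uniqueness of the extension.

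Finally, the perfect obstruction theory is constructed exactly as in \S 2.4.5. Write $\sigma:\Qmaprep\to\Bun_G(\PP_{a,1})$ for the morphism to the (smooth) stack of principal $G$-bundles on the fixed domain $\PP_{a,1}$, and let $\pi:\mathcal{C}:=\PP_{a,1}\times\Qmaprep\to\Qmaprep$ be the universal curve with universal bundle $\cP$, universal section $u$, and $\rho:\cP\times_G W\to\mathcal{C}$. The $\sigma$-relative two-term perfect obstruction theory is $(R^\bullet\pi_* u^*\mathbb{T}_\rho)^\vee$, and combining with the smoothness of $\Bun_G(\PP_{a,1})$ and the distinguished triangle $\mathfrak{g}\otimes\cO_W\to\mathbb{T}_W\to\mathbb{T}_{[W/G]}|_W$ on $W$ yields the absolute perfect obstruction theory $(R^\bullet\pi_*[u]^*\mathbb{T}_\X)^\vee$ asserted in the Proposition. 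The main obstacle, as suggested above, is the properness argument, since it is there that the flexibility of having $\infty$ in the base locus must be accommodated without producing new non-proper behavior; once the bundle-and-section extension across the central fiber is carried out as in \cite{CKM}, the rest is routine.
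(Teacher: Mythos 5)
Your proposal matches the paper's argument in all essential steps: algebraicity via the Hom-stack result of Lieblich, boundedness via the CKM boundedness theorem, the DM property from the absence of infinitesimal automorphisms at a quasimap, properness via the valuative criterion with a Langton-type extension of the bundle-and-section data, and the obstruction theory $R^\bullet\pi_*[u]^*\mathbb{T}_{\X}^\vee$ read off from the deformation theory of sections of $\cP\times_G W\to\PP_{a,1}$. The only point where the paper is more explicit than you are is precisely the one you flagged as ``the main obstacle'': for $a>1$ the domain has a $\mu_a$-gerbe at $\infty$, and the scheme-level Langton extension argument of \cite{CKM} does not apply verbatim; the paper resolves this by running the CKM argument on \'etale charts of $\PP_{a,1}$ (for $a=1$ one has $\PP_{1,1}=\PP^1$ and the statement is already in \cite{CKM}). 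You should make this reduction to \'etale charts explicit rather than leaving it as an acknowledged difficulty. Also, your citation of a ``Langton-type argument of \cite[Thm.\ 4.1.1]{CKM}'' misattributes the source; the relevant material is the properness proof of \cite[Thm.\ 7.1.6]{CKM} and the separatedness discussion in \cite[\S 4.1]{CK}.
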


\begin{proof}
Since it is a substack of $\Hom _{\Spec\CC}(\PPa, \X)$, by Proposition 2.11 of \cite{Lieb}, it is an Artin stack of locally finite type over $\CC$.
By the boundedness Theorem 3.2.4 of \cite{CKM}, it is of finite type over $\CC$.
The deformation/obstruction theory is clearly given by $R^\bullet \pi _* [u]^* \mathbb{T}_{\X}^\vee$
and by the quasimap condition, there is no infinitesimal automorphism so that the stack is a DM stack.
The only remaining part is to show the stack is proper. 
As before, we use the valuative criterion for properness. 
For $a=1$, this is known by \cite{CKM}. For $a>1$, the argument in \cite{CKM} applied on
\'etale charts of $\PPa$ works. \end{proof}

The following lemma gives a condition on $a$ which is necessary for the non-emptiness of $\Qmaprep$.

\begin{Lemma}\label{minimal} Let $T(G)$ denote a maximal torus of $G$. 
Every morphism $[u]\in \Hom _{\beta} (\PPa, \X)$ induces a canonical homomorphism 
$\tilde{\beta} : \chi (T(G)) \ra \QQ$, well-defined  up to the Weyl group action on the character group
$\chi (T(G))$. Furthermore, 
$[u]$ is representable if and only if  $a$ is  the minimal positive integer
making $a\tilde{\beta}( \eta )\in \ZZ$ for all  $\eta \in \chi(T(G))$.
\end{Lemma}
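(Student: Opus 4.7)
The plan is to reduce the lemma to a concrete classification of principal $\G$-bundles on $\PPa$ and then to an arithmetic statement about their monodromy at the stacky point $\infty=[1,0]\cong B\mu _a$. Since stabilizers in $\X=[W/\G]$ are subgroups of $\G$, representability of $[u]:\PPa\to\X$ is equivalent to representability of the classifying map $\PPa\to B\G$ of the induced principal bundle $P:=[u]^*(W\to\X)$. To classify such bundles, I would pull $P$ back along the $\CC^*$-torsor $\pi:\CC^2\setminus\{0\}\to\PPa$; the pullback $\pi^*P$ extends across the codimension-two origin of $\CC^2$ (algebraic Hartogs), so it is trivial since every principal $\G$-bundle on $\CC^2$ is trivial for reductive $\G$. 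A trivialization turns the $\CC^*$-equivariant structure recovering $P$ from $\pi^*P$ into a one-parameter subgroup $\rho:\CC^*\to \G$, unique up to $\G$-conjugation; after conjugating into a maximal torus $T:=T(\G)$, $\rho$ gives a cocharacter in $X_*(T)=\Hom_\ZZ(\chi(T),\ZZ)$, well-defined up to the Weyl group. Setting
\[
\tilde\beta(\eta):=\frac{\langle\eta,\rho\rangle}{a}\in\QQ,\qquad \eta\in\chi(T),
\]
defines the canonical homomorphism, and I would check its compatibility with $\beta$: for $\eta\in\chi(\G)$, the line bundle $[u]^*L_\eta=P\times^\G\CC_\eta$ pulls back via $\pi$ to the trivial bundle with $\CC^*$-weight $\langle\eta,\rho\rangle$, hence equals $\cO_{\PPa}(\langle\eta,\rho\rangle)$, which has degree $\langle\eta,\rho\rangle/a$.

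For the representability criterion, the only non-schematic point of $\PPa$ is $\infty$, and the induced map $\mu _a=\Aut_{\PPa}(\infty)\to \G$ is exactly $\rho|_{\mu _a}$; thus $[u]$ is representable iff this restriction is injective. Let $d$ be the positive generator of the image of $\rho^*:\chi(T)\to\ZZ$, that is, $d=\gcd_\eta\langle\eta,\rho\rangle=\gcd_\eta(a\tilde\beta(\eta))$ (with the convention $d=0$ when $\rho=0$). Writing $\rho=d\rho_0$ with $\rho_0$ primitive, $\rho_0|_{\mu _n}$ is injective for every $n$ (primitivity means $\gcd_\eta\langle\eta,\rho_0\rangle=1$), and a direct calculation yields
\[
\ker(\rho|_{\mu _a})=\mu _{\gcd(a,d)}
\]
(using the convention $\gcd(a,0)=a$). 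Hence $\rho|_{\mu _a}$ is injective iff $\gcd(a,d)=1$. On the other hand, the minimal positive integer $a'$ with $a'\tilde\beta(\chi(T))\subset\ZZ$ is $a'=a/\gcd(a,d)$; this equals $a$ precisely when $\gcd(a,d)=1$. The two conditions match, finishing the proof.

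The main technical point requiring care will be the classification of principal $\G$-bundles on $\PPa$ via $X_*(T)/W$, which rests on the triviality of $\G$-bundles on $\CC^2\setminus\{0\}$ for reductive $\G$ and on controlling the gauge freedom in the trivialization so that $\rho$ really is unique up to conjugation. Once that classification is in hand, the verification of the degree formula on $\chi(\G)$ and the arithmetic endgame at $\infty$ are straightforward bookkeeping with characters and cyclic groups.
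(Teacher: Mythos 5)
Correct, and essentially the same as the paper's proof: the one-parameter subgroup (the paper's $\lambda$, your $\rho$), the formula $\tilde\beta(\eta)=\langle\eta,\lambda\rangle/a$, and the reduction of representability to injectivity of $\lambda|_{\mu_a}$ at the stacky point $\infty$ are all exactly the paper's steps. Your Hartogs/triviality discussion simply makes explicit the paper's unstated claim that $[u]$ extends uniquely to $[\CC^2/\CC^*]\ra[W/G]$, and your $\gcd(a,d)$ bookkeeping spells out the short arithmetic with which the paper concludes.
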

\begin{proof}
Every $[u]: \PPa = [\CC ^2\setminus \{ 0\} /\CC ^*] \ra \X = [W/G]$
is the restriction of  a unique morphism $[\CC ^2/\CC ^*]\ra [W/G]$. 
The latter induces a morphism $B\CC ^*\ra BG$ or, equivalently, a group homomorphism $\lambda: \CC ^*\ra G$ unique up to
conjugacy classes. We may assume that $\lambda$ is factored through the inclusion $T(G)\subset G$.
Define $\tilde{\beta}\in \Hom (\chi (T(G)), \QQ)$ by
\[ \tilde{\beta}(\eta ) = \frac{\text{the exponent of } (\eta \circ \lambda ) }{a} \in \QQ . \]
Note that the map $[u]$ is representable if and only if 
$ \lambda _{|_{\mu _a}}$ is a monomorphism, 
where we identify $$\mu _a= \lan \xi \ran \subset \CC ^*,\;\; \xi := e^{2\pi\sqrt{-1}/a}.$$
 Let $k$ be  the smallest integer for which $\xi ^k\in \Ker \lambda _{|_{\mu _a}}$ and  $0 < k \le a$. This means that $k$ is the
smallest integer among $0 <k \le a$ such that
$k \tilde{\beta} (\eta)   \in \ZZ$, $\forall \eta$. Thus, $\lambda _{|_{\mu _a}}$ is a monomorphism if and only if $a$ is the
minimal positive integer making $a\tilde{\beta} (\eta) \in \ZZ$, $\forall \eta$.
\end{proof}

\begin{Rmk} Let the quasimap $[u]:\PPa\ra[W/G]$ be given by the data $(\PPa,P,u)$. By the extension of Grothendieck's theorem (see Theorem 2.4 and Theorem 2.7 in \cite{MT}), 
the principal $G$-bundle $P$
has a reduction $P_{T(G)}$ to the maximal torus $T(G)$, whose isomorphism class is unique up to the action of the Weyl group. The data $(\PPa,P_{T(G)},u)$ gives a quasimap
$$\widetilde{[u]}:\PPa\lra [W/T(G)]$$
which lifts $[u]$. It follows that, up to the Weyl group action, the associated numerical class
 $$\beta_{\widetilde{[u]}}:\Pic([W/T(G)])\ra \QQ$$ is uniquely determined by $[u]$. It is immediate to see that 
the homomorphism $\tilde{\beta}$ in Lemma \ref{minimal}
is the restriction of $\beta_{\widetilde{[u]}}$ to $\chi(T(G))$.

\end{Rmk}

For $\beta \in \mathrm{Eff}(W, G, \theta)$,
denote \[ Q_{\PPb}(X, \beta) = \coprod _{1\le a \le \e} Q_{\PP _{a, 1}}(X, \beta ) \] with the induced 
absolute perfect obstruction theory $(R^\bullet \pi _* [u]^*T_{\X})^\vee$.
Let $F_{\beta}$ be the distinguished $\CC^*$-fixed closed substack of $Q_{\PPb}(X, \beta)$ 
consisting of elements which have a single base-point of length $\beta(L_\theta)$ at the point $[0,1]$ of $\PP _{a,1}$ (i.e., the class $\beta$ is exactly supported at $[0,1]$). 

For simplicity we write $QG := QG ^{0+} _{0, \star, \beta } (X )$, $Q_{ \PPb} := Q_{\PPb}(X, \beta)$.

\begin{Lemma}\label{Nbhd} 
\begin{enumerate}
\item There is a natural isomorphism between an open neighborhood of $F^{\emptyset , \beta}_{\star ,0}$ 
in the closed substack $(\pi _{\PP ^1} \circ ev_\star )^{-1}(\infty)$ of $QG$ and an open neighborhood of
$F_{\beta} $ in $Q_{ \PPb} $, under which $F^{\emptyset , \beta}_{\star ,0} \cong F_{\beta}$.
The isomorphism preserves the $\CC^*\times T$-equivariant perfect obstruction theories. 
\item Under the natural isomorphism between $F^{\emptyset , \beta}_{\star ,0}$ and $F_{\beta} $,
\[ \frac{(ev_{\star})^*(\eta_\infty) [F^{\emptyset , \beta}_{\star ,0}]^{\vir}}{ e^{\CC ^*\times T}(N^{\vir}_{F^{\emptyset , \beta}_{\star ,0}/QG})} 
= \frac{[F_{\beta}]^{\vir} }{e^{\CC ^*\times T}(N^{\vir}_{F_{\beta}/Q_{ \PPb} } ) }      . \] 
Here again the localization residues are taken as sums over the 
connected components
of $F^{\emptyset, \beta}_{\star , 0}$ and of $F_\beta$.
\end{enumerate}
\end{Lemma}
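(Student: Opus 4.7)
The overall plan is to prove (1) by constructing an explicit pair of mutually inverse morphisms of stacks, after which (2) follows from an excess-intersection computation in which $\eta_\infty$ exactly cancels the normal direction to $(\pi_{\PP^1}\circ ev_\star)^{-1}(\infty)$ inside $QG$.

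In one direction, I would send a family of representable $\theta$-quasimaps $[u]\colon \PP_{a,1}\times S\to \X$ to the graph quasimap $\bigl((\PP_{a,1}\times S,\ \{[1,0]\}\times S),\ [u]_1=[u],\ [u]_2=\pi_{\PP^1}\bigr)$, where $\pi_{\PP^1}\colon \PP_{a,1}\to\PP^1$ is the coarse moduli map; this is $(0+)$-stable and its marking evaluates to $\infty\in\PP^1$. In the other direction, on a sufficiently small neighborhood of $F^{\emptyset,\beta}_{\star,0}$, every graph quasimap has irreducible domain: any putative rational tail would map to a single point of $\PP^1$ under $[u]_2$ and would carry only the attaching node as a special point (the marking $x_\star$ and the base locus are already accounted for on the parametrized component), so it would be destabilized for $\ke=0+$. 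Hence the domain is forced to be $\PP_{a,1}$ with $a$ equal to the band at $x_\star$, and extracting $[u]_1$ gives the inverse morphism.

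The compatibility of perfect obstruction theories is the key technical point. On the $Q_{\PPb}$-side, the POT relative to $\fB un_\G^{\tw}$ is $(R^\bullet\pi_*[u]_1^*\mathbb{T}_\X)^\vee$, while on the $QG$-side the same piece appears together with the extra summand $(R^\bullet\pi_*[u]_2^*T_{\PP^1})^\vee$ and the deformation-automorphism contribution of $(\PP_{a,1}, x_\star)$. Restricting to $(\pi_{\PP^1}\circ ev_\star)^{-1}(\infty)$ replaces $[u]_2^*T_{\PP^1}$ by $[u]_2^*T_{\PP^1}(-x_\star)$; a short computation using the Euler sequence on $\PP_{a,1}$, the identification $[u]_2^*\cO_{\PP^1}(1)\cong\cO_{\PP_{a,1}}(a)$, and the equivariant twist relating $T_{\PP^1}$ to $\cO_{\PP^1}(2)$ shows that this summand, combined with the source automorphism subtraction, is virtually trivial with matching equivariant characters, yielding the POT compatibility.

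For (2), the subscheme $(\pi_{\PP^1}\circ ev_\star)^{-1}(\infty)\hookrightarrow QG$ is the pullback of $\{\infty\}\subset\PP^1$ along a smooth morphism, hence is a regular codimension-one embedding with normal bundle $ev_\star^*T_\infty\PP^1$. Multiplicativity of Euler classes then gives
\begin{equation*}
e^{\CC^*\times T}\bigl(N^{\vir}_{F^{\emptyset,\beta}_{\star,0}/QG}\bigr)
= e^{\CC^*\times T}\bigl(ev_\star^*T_\infty\PP^1\bigr)\cdot e^{\CC^*\times T}\bigl(N^{\vir}_{F^{\emptyset,\beta}_{\star,0}/(\pi_{\PP^1}\circ ev_\star)^{-1}(\infty)}\bigr),
\end{equation*}
and since both $ev_\star^*(\eta_\infty)$ and $e^{\CC^*\times T}(ev_\star^*T_\infty\PP^1)$ evaluate to $-z$ on the fixed locus, they cancel, and invoking (1) identifies what remains with the right-hand side. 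I expect the main obstacle to be the equivariant POT calculation in (1), which requires careful bookkeeping of $\CC^*$-weights coming from the stacky structure at $[1,0]\in\PP_{a,1}$ and from the twist between $T_{\PP^1}$ and $\cO_{\PP^1}(2)$; the remainder of the argument is a formal application of virtual localization.
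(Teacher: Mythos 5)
Your proposal is correct and follows essentially the same route as the paper: identify the two open neighborhoods by the conditions "irreducible domain" (on the graph-space side) and "base locus away from $[1,0]$" (on the $Q_{\PPb}$ side), then compare the $\CC^*$-fixed and $\CC^*$-moving parts of the obstruction theories so that the residual $-z$ from the $\PP^1$-direction cancels against $ev_\star^*\eta_\infty$. The paper performs the fixed/moving comparison for the full $QG$-obstruction theory directly rather than routing explicitly through the intermediate divisor $(\pi_{\PP^1}\circ ev_\star)^{-1}(\infty)$, but the cancellation between $R^\bullet\pi_*[u]_2^*T_{\PP^1}(-x_\star)$ and the infinitesimal automorphisms/deformations of $(\PP_{a,1},x_\star)$ that you invoke is exactly the content of its computation.
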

\begin{proof}
(1): Take the open neighborhood in $QG$ by imposing the condition that
 the domain curves are irreducible and the open neighborhood in  $Q_{ \PPb}$ by requiring that
 the base points are away from the stacky point $[1,0]\subset \PP _{a, 1}$.

(2): We compare the $\CC^*$ moving and fixed parts of both obstruction theories.
First for $QG$, we need to look at the fixed part of $R^\bullet \pi _* ([u]_1^*T_{\X} \boxtimes [u]_2^* T\PP ^1)$ and the fixed part of
the infinitesimal automorphism/deformation of $(C, x_\star)$. Altogether its contribution coincides with the fixed part of $R^\bullet [u]^*T_{\X}$.
The Euler class of the moving part of them altogether becomes the Euler class of the moving part of $R^\bullet [u]^*T_{\X}$ 
divided by $(-z)$.
\end{proof}

By Lemma \ref{J_form} and Lemma \ref{Nbhd}, we obtain the following.

\begin{Prop}\label{I}
\[ I(0, q, z) = \one _X + \sum _{\beta \ne 0} q^\beta (\tilde{ev}_\star)_* \frac{[F_{\beta}]^{\vir} }{e^{\CC ^*\times T}(N^{\vir}_{F_{\beta}/Q_{ \PPb} } ) } .\]
\end{Prop}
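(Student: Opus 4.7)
The plan is to specialize Lemma \ref{J_form} to $\ke = 0+$ at $t = 0$ and then rewrite each $\CC^*$-localization residue via Lemma \ref{Nbhd}.

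First I would set $t = 0$ in the formula of Lemma \ref{J_form}. The last sum carries the factor $\prod_{i=1}^k ev_i^*(t)$, so its $k \ge 1$ contributions vanish at $t = 0$; the remaining clause $(\beta(L_\theta) > 1/\ke,\, k = 0)$ contributes nothing once $\ke$ is taken small enough (depending on $\beta$), which is the meaning of $\ke = 0+$. The $t/z$ term also vanishes, and the range $0 < \beta(L_\theta) \le 1/\ke$ in the third sum reduces to $\beta \ne 0$. What survives is
\[
I(0,q,z) = \one_X + \sum_{\beta \ne 0} q^\beta \, (\widetilde{\pr_{\bIX} \circ ev_\star})_* \frac{(-z)[F^{\emptyset, \beta}_{\star, 0}]^{\vir}}{e^{\CC^* \times T}(N^{\vir}_{F^{\emptyset, \beta}_{\star, 0}/QG^{0+}_{0, \{\star\}, \beta}(X)})}.
\]

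Next, since $F^{\emptyset, \beta}_{\star, 0}$ maps into $\bIX \times \{\infty\}$ under $ev_\star$, the pullback $(ev_\star)^*(\eta_\infty)$ restricts to the scalar $-z$ on this locus, so the factor $(-z)[F^{\emptyset, \beta}_{\star, 0}]^{\vir}$ above equals $(ev_\star)^*(\eta_\infty) [F^{\emptyset, \beta}_{\star, 0}]^{\vir}$. Lemma \ref{Nbhd}(2) then identifies the entire residue with $[F_\beta]^{\vir}/e^{\CC^* \times T}(N^{\vir}_{F_\beta/Q_{\PPb}})$ on the stacky loop space side.

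Finally, under the natural isomorphism $F^{\emptyset, \beta}_{\star, 0} \cong F_\beta$ from Lemma \ref{Nbhd}(1), the evaluation $\pr_{\bIX} \circ ev_\star$ of the graph quasimap at the stacky point $\infty \in \PP^1$ corresponds to the evaluation of the stacky loop at its stacky point $[1,0] \in \PPa$, and the push-forward convention \eqref{tilde ev} is identical on both sides. Summing over $\beta$ and writing $(\tilde{ev}_\star)_*$ for the stacky loop push-forward produces the claimed formula. This proof is a direct substitution once Lemmas \ref{J_form} and \ref{Nbhd} are in place; the only items one must verify are the vanishing of the last sum of Lemma \ref{J_form} at $(\ke, t) = (0+, 0)$ and the compatibility of the evaluation maps under the fixed-locus identification, neither of which poses a real obstacle.
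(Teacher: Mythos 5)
Your argument is correct and is precisely the paper's intended proof: the paper states "By Lemma \ref{J_form} and Lemma \ref{Nbhd}, we obtain the following," and your write-up simply fills in the routine details — specializing Lemma \ref{J_form} at $t=0$ with $\ke=0+$ (so the $t/z$ term drops, the last sum dies because the $k\ge 1$ terms carry $\prod ev_i^*(t)$ and the $k=0$, $\beta(L_\theta)>1/\ke$ terms are vacuous for $\ke$ small), identifying $(-z)$ with $(ev_\star)^*(\eta_\infty)$ on the locus over $\infty$, and then invoking Lemma \ref{Nbhd}(2). The only implicit point, compatibility of the evaluation maps under the natural isomorphism $F^{\emptyset,\beta}_{\star,0}\cong F_\beta$, you correctly flag and it is indeed built into Lemma \ref{Nbhd}(1).
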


\subsection{$\mathds{I}$-function for twisted theory}\label{twisted big_I} Let $\cE$ be a convex vector bundle on $\X$ as in Remark \ref{twisted}. 
Let $J^{0+,\cE}(t,q,z)$ be the $J$-function of the $(\cE, Euler)$ twisted $(0+)$-quasimap theory and let 
$$I^{\cE}(q,z)=\sum q^\beta I_\beta^{\cE}(q,z)$$ be its specialization at
$t=0$. The proof of Theorem \ref{big_I} applies for the twisted theory as well and we conclude the following.
\begin{Thm} The twisted $\mathds{I}$-function
\begin{equation*} \mathds{I}^{\cE} (t, q, z) : = \sum  _{\beta} q^{\beta} \exp (\frac{1}{z}\sum _i t_i p_i( c_1 (L_{\eta_{ij} })+\beta(L_{\eta_{ij} })z )  
I_\beta^{\cE}  (q, z). \end{equation*}
is on the Lagrangian cone of the $(\cE,Euler)$-twisted Gromov-Witten theory of $X$.
\end{Thm}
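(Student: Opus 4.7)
The plan is to mirror the proof of Theorem \ref{big_I} verbatim, with the quasimap/GW invariants replaced by their $(\cE,Euler)$-twisted counterparts. The key enabling observation, already recorded in Remark \ref{twisted}, is that all the structural properties of the virtual classes $[\Qket]^{\vir}$ used in the arguments of \S\ref{Pf_Comp_Thm} (perfect obstruction theory, splitting under normalization of nodes, virtual localization, $\CC^*$-recursion on graph spaces) continue to hold after capping with the equivariant Euler class of the index bundle $R\pi_*[u]^*\cE$; by convexity this index bundle is a genuine vector bundle in genus zero, so the splitting axioms of \cite{BF} apply to produce the twisted analogues of all factorization identities in \S 3.

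First, I would apply (the twisted version of) Lemma 6.4.1 of \cite{CKg0} to produce a unique transformation $\tau^{\infty,\cE}(t)$ and unique series $P^{\infty,\cE}(t,q,z)\in H^*_T(\bIX)\otimes K[z][[\Eff]][[\{t_i\}]]$ such that
\[ \mathds{I}^{\cE}(t,q,z) \;=\; S^{\infty,\cE}_{\tau^{\infty,\cE}(t)}\bigl(P^{\infty,\cE}(t,q,z)\bigr) \pmod{1/z^2}. \]
Then set $S_{1,\nu}:=i_\nu^*\mathds{I}^{\cE}(t,q,z)$ and $S_{2,\nu}:=i_\nu^*S^{\infty,\cE}_{\tau^{\infty,\cE}(t)}(P^{\infty,\cE}(t,q,z))$, and reduce the theorem to verifying that both systems $\{S_{j,\nu}\}_{\nu\in\underline{\bIX}^T}$, $j=1,2$, satisfy conditions (1)--(3) of the Uniqueness Lemma 7.7.1 of \cite{CKg0} (together with agreement modulo $q$, which is immediate since all $\beta=0$ contributions are identical and untwisted). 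The matching on fixed loci for $\{S_{2,\nu}\}$ follows from the geometric recursion of \S\ref{Pf_Comp_Thm} applied to the twisted $S$-operator, using the splitting properties noted above.

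The remaining work is to establish (1)--(3) for $\{S_{1,\nu}\}$. For $\{i_\nu^*I^{\cE}(0,q,z)\}$ these conditions hold by the same argument as in \S\ref{Pf_Comp_Thm}, because this system is the $t=0$ specialization of $\{i_\nu^*S^{0+,\cE}_t(P^{0+,\cE}(t,q,z))\}$ (and the twisted recursion/polynomiality lemmas continue to hold). To pass from $I^{\cE}$ to $\mathds{I}^{\cE}$ one then checks that multiplication by the exponential prefactor $\exp\bigl(\tfrac{1}{z}\sum_i t_i p_i(c_1(L_{\eta_{ij}})+\beta(L_{\eta_{ij}})z)\bigr)$ preserves the three conditions. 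Conditions (1) (polynomiality-type) and (2) (recursion shape with simple poles at $z=\kappa_{M'}$) are preserved because the prefactor, after restricting to $\nu$ and setting $z=\kappa_{M'}$, obeys the identity
\[ \bigl(c_1(L_{\eta_{ij}})+\beta(L_{\eta_{ij}})z\bigr)\big|_{\nu,\, z=\kappa_{M'}} \;=\; \bigl(c_1(L_{\eta_{ij}})+(\beta-\beta_{M'})(L_{\eta_{ij}})z\bigr)\big|_{\nu'_{M'},\, z=\kappa_{M'}}, \]
which is exactly the combinatorial balance needed to convert the prefactor at $\nu$ on the left-hand side of the recursion \eqref{recursion formula} into the corresponding prefactor at $\nu'_{M'}$ on the right-hand side (with $\beta$ replaced by $\beta-\beta_{M'}$ and an appropriate Novikov shift). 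Condition (3) (initial conditions modulo $q$) is automatic since the prefactor is $1+O(q)$.

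The main obstacle, and essentially the only nontrivial point beyond bookkeeping, is verifying that the twisted virtual localization residues on the $T$-fixed loci of $Q^{0+}_{0,m}(X,\beta)$ have the same recursive structure as in the untwisted case---i.e., that the twisting class $e^{T}(R\pi_*[u]^*\cE)$ splits appropriately across the unbroken-vs-broken decomposition of recursion components. This follows from the fact that $R\pi_*[u]^*\cE$ pulled back to an unbroken component $M'\times_{\bIX^T}M''$ splits (up to moving factors whose Euler classes cancel against the virtual normal bundle contributions in the standard way) as the external sum of its restrictions to $M'$ and $M''$, which is a direct consequence of the normalization triangle for $\cE$ along the joining gerbe, convexity ensuring the $H^1$ vanishing needed for the splitting to occur at the level of sheaves rather than merely in K-theory. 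Once this is in place, the computation of $S^{0+,\cE}_\nu$ proceeds exactly as in \S\ref{recursion_subsection}, and the uniqueness lemma concludes the proof.
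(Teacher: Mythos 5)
Your proposal is correct and follows essentially the same route as the paper, which simply observes (in \S 4.3 together with Remark \ref{twisted}) that the proof of Theorem \ref{big_I} carries over to the twisted setting because the required splitting properties of the virtual classes survive capping with $e^{\mathbb{G}_m}(R^0\pi_*[u]^*\cE)$, convexity guaranteeing this is a genuine bundle. You have merely unpacked the paper's one-line argument — the reduction via the Uniqueness Lemma, the exponential-prefactor identity at $z=\kappa_{M'}$, and the normalization-sequence splitting of the twisting class across unbroken components — all of which match the intended justification.
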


By convexity, $R^0\pi_*[u]^*\cE$ is a vector bundle on both $QG = QG ^{0+} _{0, \star, \beta } (X )$ and $Q_{ \PPb} = Q_{\PPb}(X, \beta)$ for each $\beta\neq 0$. The restrictions of the two vector bundles are identified by the natural isomorphism  between $F^{\emptyset , \beta}_{\star ,0}$ and $F_{\beta} $ from Lemma \ref{Nbhd}. From this and Lemma \ref{Nbhd}(2), it follows that
\begin{equation} I_\beta^{\cE}  (q, z)=(\tilde{ev}_\star)_* \frac{e^{\CC ^*\times T\times\mathbb{G}_m}(R^0\pi_*[u]^*\cE|_{F_\beta})\cap[F_{\beta}]^{\vir} }{e^{\CC ^*\times T}(N^{\vir}_{F_{\beta}/Q_{ \PPb} } ) }.
\end{equation}

\section{Toric Deligne-Mumford stacks} In this section we make Theorem \ref{big_I} completely explicit for toric DM stacks 
by calculating the localization residues $I_\beta(0,q,z)$ via stacky loop spaces. For toric
manifolds this is a well known calculation with Euler sequences, due to Givental \cite{Giv}. For the convenience of the reader we present its extension to the orbifold case.
As a result of this calculation, the Mirror Theorem for toric DM stacks, recently proved by different methods in \cite{CCIT}, becomes a special case of Theorem \ref{big_I}.


\subsection{Set-up}

Let $G$ be the algebraic torus $(\CC ^*)^r$, $r\ge 0$.
Fix a finite collection $[N]$ of (not necessarily distinct) characters of $G$. 
In this section, we consider the case
\[ W = \bigoplus _{\rho \in [N]} \CC _\rho , \ \ G = (\CC ^*)^r .\]
Fix a character $\theta$ of $G$ and assume that $W^{ss}: = W^{ss}(\theta)=W^{s}(\theta)$ as before.
For a character $\rho$, the associated line bundle on $\X:=[W/G]$ will be denoted by $L_\rho$.
Let $\pi _i$ be the $i$-th \lq\lq standard" character $$\pi_i:G=(\CC ^*)^r\lra \CC ^*,$$ coming from the $i$-th projection.
As the character group of $G$ is the free abelian group generated by $\pi_i$,
there are unique integers $a_{i,\rho}$ making $\rho = \sum _i a_{i, \rho }\pi _i$.

The GIT stack quotient $X:=[W^{ss}/G]$ is a toric DM stack (in the sense of \cite{BCS}), with quasi-projective coarse moduli space. Since it is a global quotient stack by an abelian group,
it is known that 
\begin{align*}  \IX  = \coprod _{g \in G} [(W^{ss})^g/G] ,   \text{ and hence }  
   \bIX   =   \coprod _{g\in G} [(W^{ss})^g / (G/\lan g \ran )] .\end{align*}
Let $T=(\CC ^*)^{[N]}$ be the big torus with the standard action on $W$.
There are only finitely many fixed points and finitely many 1-dimensional orbits on $\uX$ under the induced $T$ action.
Therefore Theorems \ref{Comp_Thm} and \ref{big_I} apply to the triple $(W,G,\theta)$.

In what follows, we will denote $D_\rho$  the hyperplane of $W$  associated to $\rho$
as well as the corresponding $T$-equivariant divisor classes of $[W/H]$ (or even its restriction to various substacks of $[W/H]$),
whenever $H$ is an algebraic group acting on $W$ with the hyperplane $D_\rho$ being invariant.

\subsection{Explicit description of $\Hom _\beta (\PPa, [W/G])$}\label{Toric_Comp}

Let $\mathcal{S}ch_{\CC}$ be the category of schemes over $\Spec\CC$ and let $U:= \CC ^2\setminus \{ 0 \}$.

Since $\Pic U \cong \Pic \CC ^2$ is trivial, any line bundle on $\PPa$ is obtained from the Borel's 
mixed construction $\mathcal{O}_{\PPa}(m):= [U\times \CC _m / \CC ^*]$ for some weight $m\in \ZZ$ of $\CC ^*$.
The degree of $\mathcal{O}_{\PPa } (m)$ is $m/a \in \QQ$.

We find an explicit description of Hom-stack $\Hom _{\beta} (\PPa, \X)$.
The groupoid fiber over $S\in \mathcal{S}ch_{\CC}$ of the category $\Hom _{\beta} (\PPa, \X)$ is equivalent to
the category described as follows:
\begin{itemize}
\item Objects are collections $$(\cL_i, i\in [r]; u_\rho, \rho \in [N])$$ 
consisting of $\CC ^*$-equivariant line bundles $\cL_i$ on 
$S\times U$,  $i=1,\dots,r$,  together with  $\CC ^*$-invariant sections $u_\rho$ of $$\cL _\rho := \ot _ i \cL _i ^{\ot a_{i\rho}}.$$
By triviality of $\Pic U$, the $\CC ^*$-equivariant line bundle $\cL_i$
is determined uniquely by an integer weight of $\CC ^*$ and a line bundle $M_i$ on $S$. For the collection to give a map of class $\beta$, the weight must be  $a\beta(L_{\pi _i}) \in \ZZ$:
$$\cL _i = M_i \boxtimes (U\times\CC_{a\beta (L_{\pi _i} )}).$$
\item
Arrows from $(\cL_i, i\in [r]; u_\rho, \rho \in [N])$  to $(\cL'_i, i\in [r]; u'_\rho, \rho \in [N])$ are collections of
isomorphisms $(\varphi _i\in \Gamma (S\times U, \cL _i^{\vee}\ot \cL _i')^{\CC^*}, i\in [r])$ for which $u_\rho$ corresponds to $u'_\rho$ for every $\rho$.
\end{itemize}

Consider the graded ring $\CC[x, y]$ with $\deg x= a$, $\deg y = 1$ and
denote by $\CC[x, y]_m$ its $\CC$-subspace of degree $m$.
Pushing-forward along $U$, together with the fact that $H^0(U, \mathcal{O}_U) = H^0(\CC ^2, \mathcal{O}_{\CC ^2}) = \CC [x, y]$,
we may regard  $u_\rho$ canonically as an element  of $$ \Gamma (S, M_\rho) \ot  _{\CC}\CC [x, y] _{a\beta(L_\rho)}, $$ 
where $M_\rho := \ot _i M_i ^{\ot a_{i,\rho}}$.
 
Hence the groupoid fiber above is equivalent to
the category whose objects are  $(M_i, u_\rho\in \Gamma (S, M_\rho) \ot  _{\CC}\CC [x, y] _{a\beta(L_\rho)} , \rho \in [N])$ and whose arrows from
$(M_i, u_\rho\in \Gamma (S, M_\rho) \ot _{\CC} \CC [x, y] _{a\beta(L_\rho)} , \rho \in [N])$ to $(M_i', u'_\rho\in \Gamma (S, M'_i)\ot \CC [x, y] _{a\beta(L_\rho)} , \rho \in [N])$
are collections of $\cO _S$-module isomorphisms  $M_i\ra M_i'$, $i\in[r]$, compatible with $u_\rho$, $u'_\rho$.

Given $a$ and $\beta$, consider the finite dimensional vector space \[ W_\beta^a := \bigoplus _{\rho \in [N]} \CC [x, y] _{a\beta (L_\rho )} \] 
with the $G$ action given by the direct sum of the diagonal $G$ action on $\CC[x,y ]_{a\beta (L_\rho)}$ by the weight $\rho$, so that $\CC[x,y ]_{a\beta (L_\rho)}\cong \bigoplus  \CC _\rho$.
In particular, when $\beta =0$, we recover $W_0 = W$ with the original $G$ action. Now the conclusion of the equivalent descriptions 
 of the groupoids above can be stated as follows.

\begin{Prop}\label{BigToric}
The following stacks are all equivalent:
\[ \Hom _{\beta} (\PPa, \X) \cong \Hom _{\beta}([\CC ^2 / \CC ^*], \X) \cong [W_\beta^a / G] . \] 
\end{Prop}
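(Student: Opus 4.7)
The proposition is essentially a packaging result: the detailed analysis immediately preceding it already produces, for each test scheme $S$, an explicit description of the groupoid fiber of $\Hom_\beta(\PPa,\X)$ in terms of line bundles $M_i$ on $S$ together with sections $u_\rho \in \Gamma(S,M_\rho)\otimes_{\CC}\CC[x,y]_{a\beta(L_\rho)}$. My plan is therefore to prove the two claimed isomorphisms in turn, with most of the work being a clean reformulation of what has just been computed.

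For the first isomorphism $\Hom_\beta(\PPa,\X)\cong \Hom_\beta([\CC^2/\CC^*],\X)$, I would use that $U=\CC^2\setminus\{0\}\subset\CC^2$ is the complement of a closed subvariety of codimension $2$ in a smooth scheme, so in particular the inclusion is an equivariant open immersion with complement of codimension $2$. An $S$-point of $\Hom_\beta([\CC^2/\CC^*],\X)$ consists of a $\CC^*$-equivariant principal $G$-bundle on $S\times\CC^2$ together with a $G$-equivariant map to $W$. Restriction to $S\times U$ gives a functor to $\Hom_\beta(\PPa,\X)$, and conversely given data on $S\times U$, the $\CC^*$-equivariant line bundles $\cL_i$ are determined by an integer and a line bundle $M_i$ on $S$ (by triviality of $\Pic U$, as already noted), and sections extend uniquely across the codimension-$2$ locus $S\times\{0\}$ because pushforward along $U\hookrightarrow\CC^2$ satisfies $H^0(U,\cO_U)=\CC[x,y]$. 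This gives a quasi-inverse functor, proving the first equivalence.

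For the second isomorphism, I would unpack the definition of the quotient stack $[W_\beta^a/G]$: an $S$-point is a principal $G$-bundle $P$ on $S$ together with a $G$-equivariant morphism $P\to W_\beta^a$. Since $G=(\CC^*)^r$, the bundle $P$ is equivalent to the data of $r$ line bundles $M_1,\dots,M_r$ on $S$; the characters $\pi_i$ recover the $M_i$, and a general character $\rho=\sum_i a_{i,\rho}\pi_i$ recovers $M_\rho=\bigotimes_i M_i^{\otimes a_{i,\rho}}$. A $G$-equivariant morphism $P\to W_\beta^a=\bigoplus_\rho \CC[x,y]_{a\beta(L_\rho)}$ is then a collection of sections $u_\rho\in \Gamma(S,M_\rho)\otimes_{\CC}\CC[x,y]_{a\beta(L_\rho)}$. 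This is precisely the explicit groupoid description already derived for $\Hom_\beta([\CC^2/\CC^*],\X)$, so a direct comparison of objects and arrows yields the equivalence of fibered categories, naturally in $S$.

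The step I expect to require the most care is verifying that the first equivalence respects the class $\beta$ and that the weight bookkeeping ($\cL_i=M_i\boxtimes(U\times\CC_{a\beta(L_{\pi_i})})$, and the corresponding grading of $u_\rho$ in degree $a\beta(L_\rho)$ under the isomorphism $H^0(U,\cO_U)=\CC[x,y]$) matches on both sides, i.e.\ that the degree computation $\deg\cO_{\PPa}(m)=m/a$ combined with the class condition $\deg[u]^*L_\rho=\beta(L_\rho)$ forces exactly the weight $a\beta(L_{\pi_i})$ and hence the finite-dimensional target $W_\beta^a$. Once the bookkeeping is fixed the rest is formal, and the naturality in $S$ needed to upgrade the equivalences of groupoid fibers to equivalences of stacks is automatic from the construction.
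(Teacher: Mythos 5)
Your proposal is correct and follows essentially the same route as the paper: both use the triviality of $\Pic U$ to reduce a $\CC^*$-equivariant line bundle on $S\times U$ to a weight and a line bundle on $S$, and both use the Hartogs-type identity $H^0(U,\cO_U)=H^0(\CC^2,\cO_{\CC^2})=\CC[x,y]$ to transport sections across the codimension-two locus, after which the matching with the groupoid description of $[W_\beta^a/G]$ is a direct unwinding of definitions. The only cosmetic difference is that you isolate the middle term $\Hom_\beta([\CC^2/\CC^*],\X)$ and prove the first isomorphism as a separate extension step, whereas the paper runs the whole identification in one pass.
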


\medskip

\noindent{\bf Convention}: From now on we let  $a$ be the minimal positive integer associated to $\beta$ by Lemma \ref{minimal} and will write $W_\beta$ for $W_\beta^a$.

\medskip

\begin{Cor} \label{explicit}
The stack $Q_{\PPa}(X, \beta)$ 
is equivalent to the smooth quotient stack $[W_\beta ^{ss}(\theta) /G]$. 
\end{Cor}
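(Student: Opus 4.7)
The plan is to combine Proposition \ref{BigToric}, which gives $\Hom_\beta(\PPa, \X) \cong [W_\beta/G]$, with the convention on $a$ (the minimal positive integer of Lemma \ref{minimal}) that makes representability automatic. Thus $\Hom^{\mathrm{rep}}_\beta(\PPa, \X) = [W_\beta/G]$, and since $Q_{\PPa}(X, \beta)$ is a $G$-invariant open substack by the very definition of a quasimap, it has the form $[V/G]$ for some $G$-invariant open $V \subseteq W_\beta$. The task then reduces to proving $V = W_\beta^{ss}(\theta)$; smoothness of $[W_\beta^{ss}(\theta)/G]$ is automatic since $W_\beta$ is a vector space.

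To identify $V$, I will translate both sides into combinatorial conditions on the vanishing pattern of $u = (u_\rho)_\rho$ with $u_\rho \in \CC[x, y]_{a\beta(L_\rho)}$. On the quasimap side, via the $G$-equivariant evaluation map $U \times W_\beta \to W$, $(x_0, y_0, u) \mapsto (u_\rho(x_0, y_0))_\rho$, the quasimap condition on $u$ is that the generic evaluation lies in $W^{ss}(\theta)$. Since $u_\rho(x_0, y_0)$ vanishes for generic $(x_0, y_0)$ if and only if $u_\rho \equiv 0$, this condition depends only on the subset $Z'(u) := \{\rho : u_\rho \equiv 0\} \subseteq [N]$. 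Using the standard combinatorial description of $\theta$-semistability for a torus acting diagonally on a direct sum of weight spaces---namely that $w \in W^{ss}(\theta)$ iff some monomial of $G$-weight $n\theta$ (some $n \geq 1$) does not vanish at $w$---the quasimap condition becomes: $[N] \setminus Z'(u)$ supports non-negative integers $(m_\rho)$ with $\sum m_\rho \rho \in \ZZ_{>0} \cdot \theta$, i.e., $[N] \setminus Z'(u)$ is $\theta$-\emph{admissible}.

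The same criterion applied to $W_\beta$ is where the only genuinely non-trivial observation enters: within each $\rho$-slot of $W_\beta$, all coordinates $z_{\rho, i, j}$ share the $G$-weight $\rho$, so the weight of any monomial in the $z_{\rho, i, j}$ depends only on the total exponent $m_\rho$ drawn from each $\rho$-slot. Consequently $u \in W_\beta^{ss}(\theta)$ iff there exist $(m_\rho)$ with $\sum m_\rho \rho \in \ZZ_{>0} \cdot \theta$ and $u_\rho \not\equiv 0$ for every $\rho$ with $m_\rho > 0$---exactly the same admissibility condition on $[N] \setminus Z'(u)$ derived above. Hence $V = W_\beta^{ss}(\theta)$, completing the proof. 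This ``collapse across the coordinates within a single $\rho$-slot'' is the main point; everything else is routine bookkeeping around the equivalence of Proposition \ref{BigToric}.
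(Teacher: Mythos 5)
Your proposal is correct and follows essentially the same route as the paper: invoke Proposition \ref{BigToric} together with the minimality of $a$ from Lemma \ref{minimal} to identify $Q_{\PPa}(X,\beta)$ with a $G$-invariant open substack of $[W_\beta/G]$, reduce the quasimap condition to requiring $u_\rho(\zeta_0,\zeta_1)\in W^{ss}(\theta)$ at a general point, and observe that this is the same as $(u_\rho)_\rho\in W_\beta^{ss}(\theta)$. The paper dispatches this last equivalence with an ``it is easy to see"; you fill it in via the King semistability criterion for a torus and the observation that in $W_\beta$ all coordinates in a given $\rho$-slot carry the same $G$-weight $\rho$, so semistability of $u$ in $W_\beta$ depends only on the set $\{\rho:u_\rho\not\equiv 0\}$, which matches the quasimap criterion. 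This is a correct and welcome unpacking of the sentence the authors left implicit.
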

\begin{proof}
By Proposition \ref{BigToric} and Lemma \ref{minimal}, $[u]\in  \Hom _{\beta} (\PPa, \X)$ is a $\CC$-point of $Q_{\PPa}(X, \beta)$ if and only if $u_\rho (\zeta _0, \zeta _1) \in W^{ss}(\theta)$
for general points $(\zeta _0, \zeta _1)\in\CC^2\setminus\{(0,0)\}$.  It is easy to see that the latter condition is equivalent to $(u_\rho)_{\rho} \in W^{ss}_\beta(\theta)$.
\end{proof}

Recall  the perfect obstruction theory $R^\bullet \pi _*[u]^*  \mathbb{T}_{[W/G]}$ for $Q_{\PPa}(X,\beta )$ as in \S \ref{Stacky_Loop}.
We can describe the complex more explicitly since $[W^{ss}_\beta/G]$ is smooth.  
Consider the generalized Euler sequence (see \cite[\S 5.1]{CKM}), i.e.,  the distinguished triangle
\[ \mathfrak{g}\times _G W \ra W\times _G W \ra \mathbb{T}_{[W/G]} \] 
 on $[W/G]$.  
It induces
an exact sequence on $[W^{ss}_\beta /G]$
\[ 0\ra \mathcal{O}^{\oplus [r]} \ra \oplus _{\rho \in [N]}\pi _* [u]^* L _\rho \ra \mathbb{T}_{[W^{ss}_\beta/G]} \ra 0, \]
and the obstruction vector bundle $ \oplus _{\rho \in [N]}R^1\pi _*  [u]^* L _\rho$  defining the virtual fundamental
class of $[W^{ss}_\beta/G] \cong Q_{\PPa}(X,\beta )$.

\subsection{The virtual normal bundle}
Recall the $\CC^*$ action on the coarse moduli $\PP ^1$, given as $t\cdot [\zeta _0, \zeta _1] = [t\zeta _0, \zeta _1]$.
This induces an action on $[W^{ss}_\beta/G]$. 
Define  \begin{equation}
Z_\beta  := \bigoplus _{\rho \in [N], \beta (L_\rho) \in \ZZ _{\ge 0} } \CC \cdot x ^{\beta(L_\rho )}  
  \subset W_\beta  . \label{Zbeta} \end{equation}
  Under the natural identification of $Z_\beta$  with the sub-$G$-representation $ \bigoplus _{\rho \in [N], \beta (L_\rho) \in \ZZ _{\ge 0} } \CC_\rho$ of $W$, we have
  \begin{equation*} Z_\beta=\bigcap _{\rho : \beta(L_\rho) <0 \text{ or } \beta (L_\rho) \not\in \NN} D_{\rho}.
  \end{equation*}
  
  The  $\CC^*$-fixed component $[Z_\beta / G]$ of $\Homa$ is distinguished in the sense that
$$F_\beta\cong [Z_\beta / G] \cap [W^{ss}_\beta /G] $$ under the identification from Corollary \ref{explicit}. Let $Z_\beta ^{ss} := Z_\beta \cap W^{ss}_\beta$.

\begin{Prop}\label{VirtNormal}
\begin{align*} & e^{\CC^*\times T} (N_{F_{\beta}/QG_{\PPa}}^{\mathrm{vir}}) =    e^{\CC^*\times T} (N_{[Z_\beta^{ss}/G] /[W^{ss}_{\beta}/G]})  \\
& =\frac{\prod_{\rho: \beta(L_\rho)  >0}\prod_{0\leq \nu < \beta(L_\rho) }(D_\rho+( \beta(L_\rho) -\nu)z)}
{\prod_{\rho: \beta(L_\rho) <0}\prod_{\lfloor \beta(L_\rho)+1\rfloor\leq \nu <0}(D_\rho+( \beta(L_\rho) -\nu)z)} \end{align*}
 and $[F_{\beta} ]^{\vir} = [F_{\beta}] = [Z^{ss}_\beta /G]$. (In the above formula the index $\nu$ in the products runs over integers and $\lfloor\;\rfloor$ denotes the round-down of a rational number.) 
\end{Prop}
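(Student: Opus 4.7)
The plan is to exploit the explicit description $Q_{\PPa}(X,\beta) \cong [W^{ss}_\beta/G]$ from Corollary \ref{explicit}, which realizes the stacky loop space as a smooth quotient stack and reduces the proposition to an equivariant weight computation on a finite-dimensional linear space.

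First I would identify $F_\beta$ as a substack of $[W^{ss}_\beta/G]$. Under the description of Section \ref{Toric_Comp}, the $\CC^*$-action on $\PPa$ scales the monomial $x^iy^{a\beta(L_\rho)-ai}$ in the $\rho$-th summand of $W_\beta = \bigoplus_\rho\CC[x,y]_{a\beta(L_\rho)}$ by $t^i$. A point of $[W^{ss}_\beta/G]$ is $\CC^*$-fixed with base locus of length $\beta(L_\theta)$ concentrated at $[0,1]\in\PPa$ precisely when it is represented, up to a twist by the 1-parameter subgroup of $G$ canonically associated to $\beta$, by sections of the form $u_\rho = c_\rho x^{\beta(L_\rho)}$; this forces $\beta(L_\rho)\in\ZZ_{\geq 0}$ on the support, with $u_\rho=0$ otherwise. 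Hence $F_\beta = [Z^{ss}_\beta/G]$, and the intersection description of $Z_\beta$ is immediate from its coordinate-wise realization inside $W$.

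Next I would decompose the restriction of the perfect obstruction theory $R^\bullet\pi_*[u]^*\mathbb{T}_{[W/G]}$ to $F_\beta$ into $\CC^*$-fixed and moving parts. The generalized Euler sequence of Section \ref{Toric_Comp} gives, in equivariant $K$-theory,
\[ R^\bullet\pi_*[u]^*\mathbb{T}_{[W/G]} \;=\; \bigoplus_\rho R^\bullet\pi_*[u]^*L_\rho \;-\; r\cdot[\cO_{F_\beta}]. \]
At $[u]\in F_\beta$ the bundle $[u]^*L_\rho$ is $\cO_{\PPa}(a\beta(L_\rho))$ carrying $T$-weight $D_\rho$ and a $\CC^*$-linearization twisted by the above 1-parameter subgroup. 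A direct \v{C}ech calculation on $\CC^2\setminus\{0\}$ identifies $H^0$ with the span of monomials $x^iy^j$ satisfying $ai+j=a\beta(L_\rho)$ and $i,j\geq 0$, and $H^1$ with the span of those satisfying $i,j\leq -1$; each such monomial contributes the factor $D_\rho+(\beta(L_\rho)-i)z$ to the $\CC^*\times T$-equivariant Euler class.

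Finally I would read off the virtual class and the Euler class of the virtual normal bundle. On the $H^0$ side the weight $\beta(L_\rho)-i$ vanishes exactly when $i=\beta(L_\rho)\in\ZZ_{\geq 0}$, giving one $\CC^*$-fixed direction per such $\rho$; together with the $-r$ term from the Euler sequence, which kills $r$ fixed directions corresponding to infinitesimal $G$-automorphisms, the fixed part of the virtual tangent bundle matches $T_{F_\beta}$. The remaining $H^0$-monomials, existing only for $\beta(L_\rho)>0$ and running over integers $0\leq i<\beta(L_\rho)$, supply the numerator. On the $H^1$ side the constraints $i,j\leq -1$ with $ai+j=a\beta(L_\rho)$ force $\beta(L_\rho)<0$ and $\lfloor\beta(L_\rho)+1\rfloor\leq i\leq -1$, and in this range $\beta(L_\rho)-i$ is strictly negative; hence $H^1$ has no $\CC^*$-fixed part, so $[F_\beta]^{\vir}=[F_\beta]=[Z^{ss}_\beta/G]$, and its moving part contributes the denominator. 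Relabeling $\nu=i$ yields the stated formula.

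The main subtlety will be tracking the $\CC^*$-weight conventions correctly at points of $F_\beta$. The bare scheme action on $\PPa$ assigns weight $i$ to $x^iy^j$, but viewing the $G$-principal bundle as $\CC^*$-equivariant via the 1-parameter subgroup associated to $\beta$ shifts the effective weight on the $\rho$-summand to $\beta(L_\rho)-i$. Getting this shift right is what singles out $x^{\beta(L_\rho)}$ as the $\CC^*$-fixed direction inside $\CC[x,y]_{a\beta(L_\rho)}$ and what produces the precise factors $D_\rho+(\beta(L_\rho)-\nu)z$ appearing in the statement.
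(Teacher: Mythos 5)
Your proof is correct and follows essentially the same route as the paper's: identify $F_\beta \cong [Z_\beta^{ss}/G]$ inside $Q_{\PPa}(X,\beta)\cong[W_\beta^{ss}/G]$, split $R^\bullet\pi_*[u]^*\mathbb{T}_{[W/G]}$ via the Euler sequence into $\bigoplus_\rho R^\bullet\pi_*[u]^*L_\rho$ minus a trivial rank-$r$ piece, compute $H^0$ and $H^1$ of $\cO_{\PPa}(a\beta(L_\rho))$ by monomials and \v{C}ech cocycles, and read off the fixed and moving parts. The only place you go beyond the paper's wording is in making explicit the twist by the one-parameter subgroup of $G$ associated to $\beta$, which is what produces the effective $\CC^*$-weight $\beta(L_\rho)-\nu$ on the $\rho$-summand rather than the bare weight; the paper uses this implicitly when it asserts the weights, so your bookkeeping is a welcome clarification rather than a different argument. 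Your accounting of the $\CC^*$-fixed part of $H^0$ (one direction per $\rho$ with $\beta(L_\rho)\in\ZZ_{\geq 0}$, cancelled against $Z_\beta$ and the $-r$ from the Euler sequence) and the observation that $H^1$ is entirely moving match the paper's conclusion $[F_\beta]^{\vir}=[F_\beta]$ and the displayed product formula exactly.
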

\begin{proof}

First note that  $\pi _* [u]^* L _\rho = \mathcal{O}(D_\rho ) \ot H^0(\PP _{a,1}, \mathcal{O}(a\beta(L_\rho))) $. 
The $\CC^*$-representation space  $H^0(\PP _{a,1}, \mathcal{O}(a\beta(L_\rho)))$ (for $\beta(L_\rho)\ge 0)$ has the basis 
$$y^{a\beta (L_\rho )}, y^{a(\beta (L_\rho)-1)} x, ..., y^{a (\beta (L_\rho ) - \lfloor \beta (L_\rho ) \rfloor ) } x^{\lfloor \beta (L_\rho ) \rfloor  }, $$
therefore its $\CC^*$ weights 
are $\beta (L_\rho)$, $\beta (L_\rho) -1$, ..., $\beta (L_\rho) - \lfloor \beta (L_\rho )\rfloor$.
This explains the numerator of $e^{\CC^*\times T} (N_{F_{\beta}/QG_{\PPa}}^{\mathrm{vir}})$.

The obstruction bundle  $\bigoplus _\rho R^1\pi _* [u]^*L _\rho$ becomes  $$\bigoplus _{ \rho : \beta (L_\rho)\in \QQ_{< 0}} \mathcal{O}(D_\rho ) \ot H^1 (\PP _{a, 1}, \mathcal{O}(a\beta (L_\rho) ).$$
A $\CC^*$-eigenbasis of the cohomology space $H^1 (\PP _{a, 1}, \mathcal{O}(a\beta (L_\rho) )$ can be computed by taking \v Cech 1-cocycles with respect to the \'etale covering of $\PPa$:
$$  y^{a(\beta (L_\rho ) + 1)}x^{-1}, y^{a(\beta (L_\rho) + 2)} x^{-2}, ..., y^{a (\beta (L_\rho ) - \lfloor \beta (L_\rho ) +1  \rfloor ) } x^{\lfloor \beta (L_\rho ) + 1 \rfloor  } . $$
Therefore the weights are $\beta (L_\rho) - \nu$ with $\lfloor \beta(L_\rho)+1\rfloor\leq \nu <0$, $\nu \in \ZZ$. This is the denominator of $e^{\CC^*\times T} (N_{F_{\beta}/QG_{\PPa}}^{\mathrm{vir}})$.

Finally, the obstruction bundle defining the virtual fundamental class of the smooth stack $[W_\beta ^{ss}(\theta )/G]=[W^{s}_\beta (\theta )/G]$,
when restricted to $F_{\beta}$ has no $\CC ^*$-fixed part. We conclude that  $[F_{\beta} ]^{\vir} = [F_{\beta}]$.
\end{proof}

Let $g_\beta = (e^{2\pi \sqrt{-1}\beta (L_\rho)}) _\rho \in G$.
By the proof of Lemma \ref{minimal}, note that the connected component into which $F_{\beta}$ lands under the evaluation
map at $\star$ and its associated component in the unrigidified cyclotomic inertia stack are respectively
\begin{align} 
  &  [(W^{ss} \cap \bigcap _{\rho : \beta(\rho) \not\in  \ZZ}  D_\rho ) / (G/\lan g_\beta \ran) ]   \text{ and } \nonumber \\
  &  [(W^{ss})^{g_\beta} / G ] = [(W^{ss} \cap \bigcap _{\rho : \beta(\rho) \not\in  \ZZ}  D_\rho ) / G ] . \label{IX} \end{align}

The evaluation map $ev_\star$ is factored through  $[(W^{ss})^{g_\beta} / G ]$ and makes
$F_{\beta}= [Z^{ss}_\beta /G]$ as a closed substack of  $[(W^{ss})^{g_\beta} / G ]$, whose normal bundle yields, by \eqref{Zbeta} and \eqref{IX},
 \begin{equation}\label{Neg}
 e^{\CC^*\times T}(N_{ [Z^{ss}_\beta /G]/[(W^{ss})^{g_{\beta}} / G ] })= e^{\CC^*\times T} (N_{Z_\beta /W^{g_\beta}  }) _{|_{  [Z^{ss}_\beta /G]} }= \prod_{\rho: \beta(L_\rho) \in \ZZ_{<0}} D_\rho.
 \end{equation}

\begin{Thm}\label{I_Thm}
\[ I(0, q,z) =\sum_{\beta\in \Eff (W, G, \theta )}q^\beta 
\frac{\prod_{\rho : \beta (L_\rho) <0}\prod_{\beta(L_\rho)\leq \nu <0}(D_\rho +(\beta(L_\rho)-\nu)z)}{\prod_{\rho :\beta(L_\rho)>0}
\prod_{0\leq \nu <\beta(L_\rho)}(D_\rho +(\beta(L_\rho)-\nu)z)}  \one _{g^{-1}_\beta} \] 
where $\one _{g^{-1}_\beta}$ is the fundamental class of $[(W^{ss})^{g^{-1}_{\beta}}/ (G/\lan g^{-1}_{\beta}\ran ) ]$.
\end{Thm}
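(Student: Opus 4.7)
The starting point is Proposition~\ref{I}, which reduces the theorem to making the pushforward $(\tilde{ev}_\star)_*\bigl([F_\beta]^{\vir}/e^{\CC^*\times T}(N^{\vir}_{F_\beta/Q_{\PPb}})\bigr)$ explicit for each $\beta\neq 0$. The geometric inputs are already in place: by Corollary~\ref{explicit} and Proposition~\ref{VirtNormal}, $F_\beta=[Z_\beta^{ss}/G]$, $[F_\beta]^{\vir}=[F_\beta]$, and $e^{\CC^*\times T}(N^{\vir}_{F_\beta/Q_{\PPb}})$ has a closed product formula.

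Next I will factor the evaluation map. By Lemma~\ref{minimal}, the minimal $a$ attached to $\beta$ equals the order of $g_\beta := (e^{2\pi\sqrt{-1}\beta(L_{\pi_i})})_i \in G$, i.e.\ the isotropy of the stacky point $[1,0] \in \PPa$. Hence $ev_\star: F_\beta \to \bIX$ lands in the sector indexed by $g_\beta$ and factors as
\[
[Z_\beta^{ss}/G] \stackrel{i}{\hookrightarrow} [(W^{ss})^{g_\beta}/G] \stackrel{\varpi}{\longrightarrow} [(W^{ss})^{g_\beta}/(G/\langle g_\beta\rangle)],
\]
where $i$ is the regular closed immersion cut out by the divisors $D_\rho$ for $\rho$ with $\beta(L_\rho)\in\ZZ_{<0}$ (so that $e^{\CC^*\times T}(N_i) = \prod_{\rho:\beta(L_\rho)\in\ZZ_{<0}} D_\rho$ by~\eqref{Neg}), and $\varpi$ is a $\mu_a$-gerbe of degree $1/a$.

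Then the pushforward is computed step by step. Gysin pushforward under $i$ multiplies a class by $e^{\CC^*\times T}(N_i)$, and $\varpi_*$ contributes a factor $1/a$. By the definition~\eqref{tilde ev}, $\tilde{ev}_\star = \iota_*\bigl(a\,(ev_\star)_*\bigr)$, so the prefactor $a$ cancels the rigidification degree $1/a$, while $\iota_*$ relabels the target component from $g_\beta$ to $g_\beta^{-1}$. Combining these yields
\[
(\tilde{ev}_\star)_*\frac{1}{e^{\CC^*\times T}(N^{\vir}_{F_\beta/Q_{\PPb}})} \;=\; \frac{\prod_{\rho:\beta(L_\rho)\in\ZZ_{<0}} D_\rho}{e^{\CC^*\times T}(N^{\vir}_{F_\beta/Q_{\PPb}})}\,\one_{g_\beta^{-1}}.
\]

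The remaining step is a short algebraic simplification. Substituting the formula from Proposition~\ref{VirtNormal}, the extra $D_\rho$ factors (one for each $\rho$ with $\beta(L_\rho)\in\ZZ_{<0}$) fill in exactly the missing $\nu=\beta(L_\rho)$ term in the relevant factor $\prod_{\lfloor\beta(L_\rho)+1\rfloor\le\nu<0}(D_\rho+(\beta(L_\rho)-\nu)z)$ appearing in the denominator of $e(N^{\vir})$; for $\beta(L_\rho)<0$ non-integer, the ranges $\lfloor\beta(L_\rho)+1\rfloor\le\nu<0$ and $\beta(L_\rho)\le\nu<0$ already coincide on integer $\nu$, so no modification is needed. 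After this bookkeeping, the numerator becomes the one stated in the theorem. The main pitfall is the precise tracking of multiplicities: the band order $a$, the $1/a$ rigidification degree, and the involution $\iota$ must all be combined correctly to produce $\one_{g_\beta^{-1}}$ (rather than $\one_{g_\beta}$) with coefficient $1$; mishandling any of these would falsify the formula in the integer-class range where the extra $D_\rho$ factors arise.
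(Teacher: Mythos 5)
Your argument is correct and is exactly the paper's approach: the paper's one-line proof (``immediate from Proposition~\ref{I}, Proposition~\ref{VirtNormal}, and equation~\eqref{Neg}; the factor $\br$ disappears since the rigidification $\varpi$ has degree $\br^{-1}$'') is precisely the chain of identifications you spell out — the factorization of $ev_\star$ through the closed embedding $i$ and the rigidification gerbe $\varpi$, the Gysin/projection-formula step producing the extra $\prod_{\rho:\beta(L_\rho)\in\ZZ_{<0}}D_\rho$, the cancellation of $a=\br$ against the gerbe degree $1/a$, the relabeling of the sector by $\iota$, and the bookkeeping showing the extra $D_\rho$ factors fill in exactly the missing $\nu=\beta(L_\rho)$ terms to convert the $\lfloor\beta(L_\rho)+1\rfloor\le\nu<0$ ranges into $\beta(L_\rho)\le\nu<0$.
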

\begin{proof}
This is immediate from Proposition \ref{I}, Proposition \ref{VirtNormal}, and equation \eqref{Neg}.
The factor $\br$ disappears since the rigidification $\varpi$ has degree $\br ^{-1}$.
\end{proof}

\begin{Cor}\label{I_0=1}
For a semi-positive triple $(W=\CC ^{[N]}, G=(\CC^*)^r, \theta)$, $I(0, q, z)$ takes form
\[ \one _X + \frac{I_1(q)}{z} + O(1/z^2), \ I_1(q) \in (q\Lambda_K)H^{\le 2}_T(X,\Lambda_K). \]
\end{Cor}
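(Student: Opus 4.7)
The strategy is to combine the explicit formula of Theorem \ref{I_Thm} with the grading of Theorem \ref{JS}(2), showing that for every nonzero $\beta\in\Eff(W,G,\theta)$ the $q^\beta$-coefficient of $I(0,q,z)$ is of order $O(1/z)$. The cohomological bound on $I_1(q)$ will then follow directly from Theorem \ref{JS}(3a).

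First I will fix $\beta\ne 0$ and set $d_\rho:=\beta(L_\rho)$. Expanding the coefficient of Theorem \ref{I_Thm} around $z=\infty$, I observe that each numerator factor $(D_\rho+(d_\rho-\nu)z)$ carries a leading $z$ to the top-$z$ term \emph{except} when $d_\rho-\nu=0$; these ``forced'' purely cohomological factors occur exactly once per $\rho$ with $d_\rho\in\ZZ_{<0}$, while every denominator factor carries a leading $z$. A direct count yields the highest $z$-power
\[
k_{\max}\;=\;-\sum_\rho \lceil d_\rho\rceil\;-\;\#\{\rho:d_\rho\in\ZZ_{<0}\}.
\]

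The central step is the integer inequality $N(\beta):=\sum_\rho\lceil d_\rho\rceil+\#\{\rho:d_\rho\in\ZZ_{<0}\}\ge 1$ for every such $\beta$. Observing that
\[
N(\beta)-\beta(\det T_{\X})\;=\;\#\{\rho:d_\rho\in\ZZ_{<0}\}+\sum_{d_\rho\notin\ZZ}(\lceil d_\rho\rceil-d_\rho)\;\ge\;0,
\]
I will conclude $N(\beta)\ge \beta(\det T_{\X})\ge 0$ by semi-positivity. Since $N(\beta)$ is an integer, the borderline $N(\beta)=0$ forces $d_\rho\in\ZZ_{\ge 0}$ for all $\rho$ with $\sum_\rho d_\rho=0$, i.e.\ $d_\rho=0$ for every $\rho$; in the toric setting the characters $\rho$ span $\chi(G)\otimes\QQ$, so the $(L_\rho)_\rho$ generate $\Pic(\X)\otimes\QQ$, and this would force $\beta=0$, a contradiction. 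Hence $N(\beta)\ge 1$ and $k_{\max}\le -1$.

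This establishes $I(0,q,z)=\one_X+I_1(q)/z+O(1/z^2)$ with $I_1(q)\in (q\Lambda_K)H^*_T(\bIX,\Lambda_K)$, and Theorem \ref{JS}(3a) applied at $\ke=0+$, $t=0$ identifies $I_1(q)$ with $J_1^{0+}(q)$, so the cohomological bound $I_1\in H^{\le 2}_T$ follows at once. The principal obstacle in the plan is the combinatorial inequality $N(\beta)\ge 1$; the delicate case is the borderline $\beta(\det T_{\X})=0$, where semi-positivity alone does not close the argument and must be supplemented by the toric-specific fact that the $(L_\rho)$ generate $\Pic(\X)\otimes\QQ$.
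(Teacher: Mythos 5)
Your computation of the top $z$-power $k_{\max}=-N(\beta)$ with $N(\beta)=\sum_\rho\lceil\beta(L_\rho)\rceil+\#\{\rho:\beta(L_\rho)\in\ZZ_{<0}\}$ is correct, as is the decomposition $N(\beta)-\beta(\det T_\X)=\#\{\rho:\beta(L_\rho)\in\ZZ_{<0}\}+\sum_{\beta(L_\rho)\notin\ZZ}(\lceil\beta(L_\rho)\rceil-\beta(L_\rho))\ge 0$. Your handling of the borderline $N(\beta)=0$ (forcing all $\beta(L_\rho)=0$ and then $\beta=0$ since the $L_\rho$ span $\Pic(\X)\otimes\QQ$ in the toric setting) is a clean and valid way to conclude $N(\beta)\ge 1$, slightly more direct than the paper's case split. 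Up to this point you have established $I(0,q,z)=\one_X+I_1(q)/z+O(1/z^2)$ with $I_1(q)\in(q\Lambda_K)H^*_T(\bIX,\Lambda_K)$.

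The gap is in the final sentence. The Corollary asserts $I_1(q)\in(q\Lambda_K)H^{\le 2}_T(X,\Lambda_K)$ -- that is, $I_1$ is supported on the \emph{untwisted sector} $X\subset\bIX$ and has usual degree $\le 2$. Theorem \ref{JS}(3a) only gives $J^{0+}_1(q)\in(q\Lambda_K)H^{\le 2}(\bIX,\Lambda_K)$ in the \emph{age-shifted} grading on the full Chen--Ruan cohomology, and that does not exclude twisted-sector contributions (for instance, the fundamental class $\one_{X_c}$ of a twisted component with $\mathrm{age}(X_c)\le 1$ has age-shifted complex degree $\le 1$, hence is permitted by \ref{JS}(3a)). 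Your bound $N(\beta)\ge 1$ shows each $q^\beta$-term is $O(1/z)$, but to eliminate twisted-sector terms from $I_1$ you would need $N(\beta)\ge 2$ whenever $g_\beta\ne e$, i.e.\ whenever some $\beta(L_\rho)\notin\ZZ$. The paper handles precisely this point: it treats the untwisted case (all $\beta(L_\rho)\in\ZZ$) separately by invoking the argument of Lemma 5.9.1 of \cite{CKg}, and for the twisted case it exploits that the middle summand $\sum_{\beta(L_\rho)\notin\ZZ}(\lceil\beta(L_\rho)\rceil-\beta(L_\rho))$ is \emph{strictly} positive, combined with the observation that the two remaining inequalities cannot both be equalities, to push the exponent of $1/z$ past $1$. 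Your proposal as written never addresses the twisted-sector exclusion and therefore does not establish the full statement of the Corollary.
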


\begin{proof}
Let $\beta \ne 0$ so that for some $\rho$, 
$\beta (L_\rho ) \ne 0$. Note that  $ \one _{g^{-1}_\beta}$ is the fundamental class $\one _X$ of the untwisted sector if and only 
if $\beta (L_\rho)\in \ZZ$ for all $\rho$.  In this case, the argument in the proof of Lemma 5.9.1 in \cite{CKg} works.
If $ \one _{g^{-1}_\beta}$ is the fundamental class of a twisted sector, then the {\em power} of $1/z$ appearing in $q^\beta$ becomes
\begin{align} & \nonumber  \sum _{\rho\in [N]}  \lceil\beta (L_\rho )\rceil + \# \text{ negative integers } \beta (L_\rho)  
\\ \nonumber & >  \beta (\det \mathbb{T}_{\X})  +   \# \text{ negative integers } \beta (L_\rho) \\ 
\label{ineq} &  \ge \beta (\det \mathbb{T}_{\X})  \ge 0. \end{align} 
This implies that $I(0, q, z) = \one _X + O(q)$.
The two inequalities in \eqref{ineq} cannot be equalities at the same time since $\beta\ne 0$. Therefore there is no twisted sector part 
in the $1/z$-coefficient of the small I-function $I(0,q,z)$.
\end{proof}

Let $$\{\gamma_0=\one_X,\gamma_1,\dots, \gamma_s,\gamma_{s+1},\dots, \gamma_l\}$$ be the part of the basis of Chen-Ruan cohomology corresponding to the untwisted sector $H^*_T(X)$, with
$\{\gamma_1,\dots,\gamma_s\}$ basis for $H^2_T(X)$. In the toric case, 
we may write each $\gamma_i$ as a polynomial in $T$-equivariant  first Chern classes of 
line bundles coming from characters of $G$, as in \eqref{poly_exp}. In particular, for the divisors $\gamma_i, i=1,\dots, s$ we write $\gamma_i=c_1(L_{\eta_i})$.

\begin{Cor}\label{toric-explicit}
\begin{enumerate}
\item Let $t=\sum_{i=0}^l t_i\gamma_i=\sum_{i=0}^l t_i p_i( c_1 (L_{\eta_{ij}}))$ be a general element of $H^*_T(X)$.
Then 
\begin{align*} 
\mathds{I} (t, q, z) & = \sum  _{\beta} q^{\beta} \exp \left(\frac{1}{z}\sum _{i=0}^l t_i p_i(c_1(L_{\eta _{ij}})  + \beta (L_{\eta _{ij}} ) z ) \right) \\
& \frac{\prod_{\rho : \beta (L_\rho) <0}\prod_{\beta(L_\rho)\leq \nu <0}(D_\rho +(\beta(L_\rho)-\nu)z)}{\prod_{\rho :\beta(L_\rho)>0}
\prod_{0\leq \nu <\beta(L_\rho)}(D_\rho +(\beta(L_\rho)-\nu)z)}  \one _{g^{-1}_\beta}
 \end{align*}
 is on the Lagrangian cone of the Gromov-Witten theory of $X$.
\item  For $t= t_0\one _X + \sum _{i=1}^s t_i \gamma_i \in H^0_T(X)\oplus H^2_T(X)$,  
\begin{align*} I^{\Giv}( t, q, z) & =
\sum _{\beta \in \Eff(W, G, \theta) }    q^\beta  e^{(t_0\one _X + \sum _{i=1}^s t_i (\gamma_i+ \beta(L_{\eta_i} ) z )) /z }  \\
& \frac{\prod_{\rho : \beta (L_\rho) <0}\prod_{\beta(L_\rho)\leq \nu <0}(D_\rho +(\beta(L_\rho)-\nu)z)}{\prod_{\rho :\beta(L_\rho)>0}
\prod_{0\leq \nu <\beta(L_\rho)}(D_\rho +(\beta(L_\rho)-\nu)z)}  \one _{g^{-1}_\beta} \end{align*}
 is on the Lagrangian cone of the Gromov-Witten theory of $X$.
 \item   If  $(W=\CC ^{[N]}, G=(\CC^*)^r, \theta)$ is a semi-positive triple,
\[ J^{\infty}(t +I_1(q), q, z) = I^{\Giv} (t, q, z), \text{ for } t\in H^{\le 2}_T(X). \]
Furthermore, the so-called {\em mirror map} $t\mapsto t+I_1(q)$ is expressed via two-pointed $(0+)$-quasimap invariants as
\[ t + I_1(q) = t + \sum_{i=0}^s\sum _{\beta \ne 0} q^\beta \gamma _i \lan \gamma ^i , \one_X \ran _{0, 2, \beta}^{0+}. \]
\end{enumerate}
\end{Cor}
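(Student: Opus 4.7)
The plan is to derive parts (1) and (2) by direct substitution into Theorem \ref{big_I}, and then to obtain part (3) from the semi-positive wall-crossing in Theorem \ref{JS}(3), the $S$-operator identification of Theorem \ref{Comp_Thm}\eqref{Part1}, and the divisor equation for the Gromov--Witten $J$-function of $X$. First I would verify that Theorem \ref{big_I} applies: the big torus $T=(\CC^*)^{[N]}$ acts on $W$ commuting with $G$, and the paper notes explicitly that the induced action on $\uX$ has isolated fixed points and isolated one-dimensional orbits. Hence $\mathds{I}(t,q,z)$ is on the Lagrangian cone, and plugging the closed-form expression for $I_\beta(0,q,z)$ from Theorem \ref{I_Thm} into Definition \ref{new} gives the formula in part (1). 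Part (2) is then the specialization to $t\in H^{\leq 2}_T(X)$: each $p_i$ is linear (or the constant $1$), so $p_i(c_1(L_{\eta_i})+\beta(L_{\eta_i})z)=c_1(L_{\eta_i})+\beta(L_{\eta_i})z$ and the exponential factor collapses to the stated form.

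For part (3), I would begin with Corollary \ref{I_0=1}: for a semi-positive toric triple, $I(0,q,z)=\one_X+I_1(q)/z+O(1/z^2)$ with $I_1(q)\in(q\Lambda_K)H^{\leq 2}_T(X)$, and the end of its proof confirms that $I_1(q)$ lies in the untwisted sector. Since $I(0,q,z)=J^{0+}(0,q,z)$, in the notation of Theorem \ref{JS}(3) this gives $J_0^{0+}(q)=1$ and $J_1^{0+}(q)=I_1(q)$, so Theorem \ref{JS}(3)(b) yields $I(0,q,z)=S^{0+}_0(z)(\one_X)$. Applying Theorem \ref{Comp_Thm}\eqref{Part1} at $\ke=0+$, $t=0$, together with the identity $S^\infty_\tau(\one_X)=J^\infty(\tau,q,z)$ recorded in the Remark after Theorem \ref{JS}, gives
\[ I(0,q,z)\;=\;S^\infty_{\tau(0)}(\one_X)\;=\;J^\infty(\tau(0),q,z),\qquad \tau(0)=\sum_{\beta\ne 0}q^\beta\gamma_i\langle\gamma^i,\one_X\rangle^{0+}_{0,2,\beta}. \]
Comparing $1/z$-coefficients forces $I_1(q)=\tau(0)$; and since $I_1(q)\in H^{\leq 2}_T(X)$, only the basis elements $\gamma_0,\ldots,\gamma_s$ contribute to $\tau(0)$, giving the mirror map expression claimed in part (3).

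To finish, I would promote $I(0,q,z)=J^\infty(I_1(q),q,z)$ to the $t$-dependent identity $J^\infty(t+I_1(q),q,z)=I^{\Giv}(t,q,z)$ via the divisor equation for the orbifold $J$-function, which for $t=t_0\one_X+\sum_{i=1}^s t_i\gamma_i$ and $\tau$ in the untwisted sector reads
\[ J^\infty(\tau+t,q,z)\;=\;e^{t/z}\,J^\infty\bigl(\tau,\,q\cdot e^{t\cdot},\,z\bigr), \]
where $q\cdot e^{t\cdot}$ denotes the Novikov rescaling $q^\beta\mapsto q^\beta\prod_{i=1}^s e^{t_i\beta(L_{\eta_i})}$. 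Applied with $\tau=I_1(q)$, the right-hand side matches the explicit formula of part (2) for $I^{\Giv}(t,q,z)$, so the desired equality drops out. The step most worth handling carefully is this last one: ensuring that the Novikov shift built into the divisor equation aligns precisely with the factor $\exp(\sum_i t_i\beta(L_{\eta_i}))$ appearing in $I^{\Giv}$, and that the $t_0\one_X$-direction is absorbed by the string-equation factor $e^{t_0/z}$. Everything else is a clean assembly of results already established in the paper.
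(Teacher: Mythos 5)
Your treatment of parts (1) and (2) matches the paper's exactly: the paper declares (1) immediate from Theorem~\ref{big_I} together with the closed form of Theorem~\ref{I_Thm}, and (2) a specialization thereof --- which is precisely your direct-substitution argument. For part (3) you take a genuinely different route than the one the paper indicates. The paper cites part (2), Theorem~\ref{JS}(3b), and Corollary~\ref{I_0=1}: i.e.\ it starts from the fact that $I^{\Giv}(t,q,z)$ lies on the Lagrangian cone $\cL_X$, invokes the dilaton-shift characterization of $J^\infty$ (a series on the cone of the shape $\one_X + \tau/z + O(1/z^2)$ must equal $J^\infty(\tau,q,z)$), reads off the $1/z$-coefficient using Corollary~\ref{I_0=1}, and then obtains the enumerative mirror-map formula from the second identity in Theorem~\ref{JS}(3b) at $t=0$, $\ke=0+$, $J_0^{0+}=1$. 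You instead route through the $S$-operator wall-crossing Theorem~\ref{Comp_Thm}\eqref{Part1} and the identity $J^\infty = S^\infty(\one_X)$ (Remark after Theorem~\ref{JS}) to establish $I(0,q,z) = J^\infty(\tau(0),q,z)$ with $\tau(0)=I_1(q)$, and only then promote to the $t$-dependent identity via the orbifold divisor/string equation. Both routes are legitimate; the paper's gets the full $t$-dependence in one step from the cone argument, whereas yours requires the extra divisor-equation step. What your approach buys is an independent, more hands-on derivation of the mirror-map identity $I_1(q)=\tau(0)$ that does not need the abstract cone characterization. The point you flag about aligning the Novikov shift is the right one to worry about: the factor $\exp(\sum_i t_i\beta(L_{\eta_i}))$ in $I^{\Giv}$ corresponds to the substitution $q^\beta \mapsto q^\beta e^{\sum_i t_i\beta(L_{\eta_i})}$, which also acts on the argument $I_1$ when you track the $1/z$-coefficient, so the careful bookkeeping is exactly the standard change-of-variables underlying the divisor equation. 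This same bookkeeping is implicit in the paper's terse proof, so you are not introducing an extra difficulty --- just making the shared one visible.
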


\begin{proof}
(1) is immediate from Theorem \ref{big_I} and Theorem \ref{I_Thm}, and (2) is a specialization of (1).

(3) follows from  (2), Theorem \ref{JS} (3b), and  Corollary \ref{I_0=1}.
\end{proof}

In particular, part (3) of the above Corollary proves Conjecture 4.3 of \cite{Ir} after the identification of  $\HA^*_T(\bIX)$ with $\HA ^*_T(\IX)$ by
the pullback $\varpi ^*$. (Of course, the interpretation of the mirror map as a generating series of quasimap invariants is new.)

Part (2) of the Corollary is precisely the main result of \cite{CCIT}. Note that the notion of \lq\lq $S$-extended $I$-function" from \cite{CCIT} corresponds in our terminology to the Givental small $I$-function for a different GIT presentation of the geometric target $X$. 

\begin{Rmk} Combining the considerations from Remark \ref{twisted} with Theorem \ref{twisted big_I} and the calculations of this section provides a different proof (not relying on Tseng's orbifold Quantum Lefschetz theorem \cite{Tseng}) of the Mirror Theorem for complete intersections of convex hypersurfaces in toric DM stacks of
Coates, Corti, Iritani, and Tseng, see \cite[Theorem 25]{CCIT2}. We leave the easy details to the reader.

\end{Rmk}

\end{document}